\documentclass[reqno,12pt]{amsart}
 \usepackage{amsmath}
 \usepackage{amsthm}
 \usepackage{amssymb}
 \usepackage{latexsym,longtable}
 \usepackage{graphicx}
 \usepackage{multicol}
 \usepackage{mathrsfs}
 \usepackage{young}
 \usepackage[vcentermath,enableskew,stdtext]{youngtab}
 \usepackage[left=1.1in,right=1.1in,top=1.1in,bottom=1.14in]{geometry}
 \usepackage[all]{xy}
    \SelectTips{cm}{10}     
    \everyxy={<2.5em,0em>:} 
 \usepackage{fancyhdr}      
    \linespread{1.03}
 \usepackage{multicol}
 \usepackage{multirow}
 \usepackage{enumitem}
 \usepackage{stmaryrd}
 \usepackage{etex}
 \usepackage{tikz}
 \usepackage{float}
 \usepackage{array}
 \usepackage{colortbl}
 \usepackage{mathtools}
 \restylefloat{figure}
 \usetikzlibrary{shapes}
 \usetikzlibrary{positioning}
 \usetikzlibrary{snakes}
 \usetikzlibrary{decorations.pathmorphing}
 \usetikzlibrary{decorations.markings}

\usepackage{amsfonts,epsfig,color}
\setlength{\parskip}{0.13cm}
\setlength{\baselineskip}{10pt}
\makeatletter
\newcommand*{\centerfloat}{%
  \parindent \z@
  \leftskip \z@ \@plus 1fil \@minus \textwidth
  \rightskip\leftskip
  \parfillskip \z@skip}
\makeatother
\newcounter{ctr}
\theoremstyle{plain}
\newtheorem{theorem}{Theorem}[section]
\newtheorem{lemma}[theorem]{Lemma}
\newtheorem{corollary}[theorem]{Corollary}
\newtheorem{proposition}[theorem]{Proposition}

\newtheorem{problem}[theorem]{Problem}

\theoremstyle{definition}
\newtheorem{definition}[theorem]{Definition}

\newtheorem{remark}[theorem]{Remark}
\newtheorem{example}[theorem]{Example}







\newcommand{\ignore}[1]{}









\renewcommand{\a}{\ensuremath{\mathfrak{a}}}

\newcommand{\CC}{\ensuremath{\mathbb{C}}}

\newcommand{\ZZ}{\ensuremath{\mathbb{Z}}}

\newcommand{\be}{\begin{equation}}
\newcommand{\ee}{\end{equation}}

\renewcommand{\S}{\ensuremath{\mathcal{S}}}
\newcommand{\tsr}{\ensuremath{\otimes}}









\newcommand{\tto}{\ensuremath{\rightsquigarrow}}

\newcommand{\reading}{\text{\rm rowword}}

\newcommand{\ke}{\ensuremath{\sim}}
\newcommand{\sh}{\text{\rm sh}}
\newcommand{\evac}{\text{\rm ev}}
\newcommand{\transpose}{\ensuremath{\text{\rm t}}}

\newcommand{\ngcell}{\text{-}}








\newcommand{\tc}{\ensuremath{\text{tc}}}
\newcommand{\crc}[1]{\ensuremath{\overline{#1}}\vphantom{\underline{\overline{#1}}}}
\newcommand{\crcempty}{{\crc{\phantom{{}_\circ}}}}
\newcommand{\sub}{\ensuremath{\text{\rm sub}}}
\newcommand{\convert}[2]{\ensuremath{{#2}(#1)}}
\newcommand{\mix}{\ensuremath{\text{\rm m}}}
\newcommand{\dualmix}{\ensuremath{\text{\rm dm}}}
\newcommand{\lr}{\ensuremath{\text{\rm lr}}} 
\newcommand{\stand}{\ensuremath{\text{\rm st}}} 

\newcommand{\bneg}{\ensuremath{{\text{\rm neg}}}}
\newcommand{\lessdoteq}{\ensuremath{\underline{\lessdot}}}
\newcommand{\sw}{\ensuremath{\text{SW}}}
\newcommand{\mboxtwo}[1]{\makebox[4mm][r]{$#1$}}
\newcommand{\plain}{\ensuremath{{\text{\rm blft}}}}
\newcommand{\plainr}{\ensuremath{{\text{\rm brgt}}}}
\newcommand{\rev}{\ensuremath{{\text{\rm rev}}}}
\newcommand{\gcoef}{\ensuremath{g_{\lambda\,\mu(d)\,\nu}}}
\newcommand{\ud}{\ensuremath{{\text{\rm ud}}}} 
\newcommand{\inv}{\ensuremath{{\text{\rm inv}}}}
\newcommand{\barrev}{\ensuremath{{\text{\rm rev}_\crcempty}}}
\newcommand{\barud}{\ensuremath{{{\text{\rm ud}_\crcempty}}}} 

\newcommand{\nobarrev}{\ensuremath{{{\text{\rm rev}_\varnothing}}}}
\newcommand{\nobarud}{\ensuremath{{{\text{\rm ud}_\varnothing}}}} 

\newcommand{\nmix}{_\mix} 
\newcommand\mybox[1]{
\vcenter{
\let\\=\cr
\baselineskip=-16000pt \lineskiplimit=16000pt \lineskip=0pt
\halign{&\boxcell{##}\cr\vline#1\vline\crcr}}}

\newcommand{\boxcell}[1]{{%
\unitlength=\cellsizeCol
\begin{picture}(1,1)
\put(0,0){\makebox(1,1){$#1$}}
\put(0,0){\line(1,0){1}}
\put(0,1){\line(1,0){1}}
\end{picture}%
}}

\newcommand\pad[1]{
\vtop{
\let\\=\cr
\baselineskip=-16000pt
\lineskiplimit=16000pt
\lineskip=0pt
\halign{& \inviscell{##} \cr #1 \crcr} }
\hspace{-.73ex}}

\newcommand{\inviscell}[1]{{%
\unitlength=\cellsizeCol
\begin{picture}(1,1)
\put(0,0){\makebox(1,1){$#1$}}
\end{picture}%
}}



\newlength{\cellsize}
\cellsize=2.5ex
\newcommand\tableau[1]{
\vcenter{
\let\\=\cr
\baselineskip=-16000pt \lineskiplimit=16000pt \lineskip=0pt
\halign{&\tableaucell{##}\cr#1\crcr}}}


\newcommand{\tableaucell}[1]{{%
\def \arg{#1}\def \void{}%
\ifx \void \arg
\vbox to \cellsize{\vfil \hrule width \cellsize height 0pt}%
\else \unitlength=\cellsize
\begin{picture}(1,1)
\put(0,0){\makebox(1,1){$#1\vphantom{\crc{#1}}$}}
\put(0,0){\line(1,0){1}}
\put(0,1){\line(1,0){1}}
\put(0,0){\line(0,1){1}}
\put(1,0){\line(0,1){1}}
\end{picture}%
\fi}}

\newcommand\boldtableau[1]{
\vcenter{
\let\\=\cr
\baselineskip=-16000pt \lineskiplimit=16000pt \lineskip=0pt
\halign{&\boldtableaucell{##}\cr#1\crcr}}}


\newcommand{\boldtableaucell}[1]{{%
\def \arg{#1}\def \void{}%
\ifx \void \arg
\vbox to \cellsize{\vfil \hrule width \cellsize height 0pt}%
\else \unitlength=\cellsize
\begin{picture}(1,1)
\put(0,0){\makebox(1,1){$\mathbf{#1\vphantom{\crc{#1}}}$}}
\put(0,0){\line(1,0){1}}
\put(0,1){\line(1,0){1}}
\put(0,0){\line(0,1){1}}
\put(1,0){\line(0,1){1}}
\end{picture}%
\fi}}

\newlength{\colskip}
\newlength{\dwidth}

\newcommand{\lms}{\mathbf{\{}} 
\newcommand{\rms}{\mathbf{\}}}

\title{Kronecker coefficients for one hook shape}

\keywords{Kronecker coefficients, mixed insertion, colored tableaux, (k,l) tableaux, hook Schur functions}

\begin{document}

\author{Jonah Blasiak}
\email{jblasiak@gmail.com}
\address{University of Michigan, Department of Mathematics, 2074 East Hall, 530 Church Street, Ann Arbor, MI 48109-1043}
\thanks{This work was partially supported by NSF Grant DMS-1161280.}

\begin{abstract}
We give a positive combinatorial formula for the Kronecker coefficient $\gcoef$ for any partitions $\lambda$, $\nu$ of  $n$ and hook shape $\mu(d) := (n-d,1^d)$.
Our main tool is Haiman's \emph{mixed insertion}.  This is a generalization of Schensted insertion to \emph{colored words}, words in the alphabet of barred letters  $\crc{1},\crc{2}, \ldots$ and unbarred letters $1,2, \ldots$.
We define the set of \emph{colored Yamanouchi tableaux of content $\lambda$ and total color $d$}  ($\text{CYT}_{\lambda, d}$) to be the set of mixed insertion tableaux of colored words $w$ with exactly $d$ barred letters and such that $w^\plain$ is a Yamanouchi word of content $\lambda$, where $w^\plain$ is the ordinary word formed from $w$ by shuffling its barred letters to the left and then removing their bars.
We prove that $\gcoef$ is equal to the number of $\text{CYT}_{\lambda, d}$ of shape $\nu$ with unbarred southwest corner.
\end{abstract}
\maketitle
\setlength{\cellsize}{2.2ex}
\section{Introduction}
Let  $\S_n$ be the symmetric group on $n$ letters and $M_\nu$ be the irreducible  $\CC \S_n$-module corresponding to the partition $\nu$.
Given three partitions $\lambda,\mu,\nu$ of  $n$,
the \emph{Kronecker coefficient $g_{\lambda \mu \nu}$} is the multiplicity of
$M_\nu$ in the tensor product $M_\lambda \otimes M_\mu$.  A fundamental open problem in algebraic combinatorics, called the \emph{Kronecker problem}, is to find a positive combinatorial formula for these coefficients.

This work began with the following computer experiment, first investigated by Lascoux in \cite{Lascoux}:
let $Z_\lambda$ be the superstandard tableau of shape and content $\lambda$ and $Z_{\lambda}^\stand$ its standardization.
Let $\Gamma_\lambda$ denote the set of permutations with insertion tableau $Z_\lambda^\stand$.
Form the multiset of permutations
\be \label{e lambda circ mu}
\Gamma_\lambda \circ \Gamma_\mu := \lms u \circ v : u \in \Gamma_\lambda, v \in \Gamma_\mu \rms,
\ee
where $\circ$ denotes multiplication in $\S_n$, i.e. composition of permutations.  Then form the multisets of insertion and recording tableaux:
\begin{align*}
P(\Gamma_\lambda \circ \Gamma_\mu) := \lms P(w) :  w \in \Gamma_\lambda \circ \Gamma_\mu \rms,\\
Q(\Gamma_\lambda \circ \Gamma_\mu) := \lms Q(w) :  w \in \Gamma_\lambda \circ \Gamma_\mu \rms.
\end{align*}
 The set $\Gamma_\lambda$  naturally labels a basis of $M_\lambda$. For instance,  $\Gamma_\lambda$ can be identified with a right cell of the $W$-graph  $\Gamma_W$  as defined by Kazhdan-Lusztig in \cite{KL}, for  $W = \S_n$.
A nice solution to the Kronecker problem might assign labels to a basis of $M_\lambda \tsr M_\mu$ so that the decomposition of $M_\lambda \tsr M_\mu$  into irreducibles is apparent from these labels.
The following two properties, if true for every partition $\nu$ of $n$, would make
$\Gamma_\lambda \circ \Gamma_\mu$ a beautifully simple candidate for such labels.
\begin{list}{(\Alph{ctr})}{\usecounter{ctr} \setlength{\itemsep}{2pt} \setlength{\topsep}{3pt}}
\item For every $T \in \text{SYT}(\nu)$, the multiplicity of $T$ in $P(\Gamma_{\lambda} \circ \Gamma_{\mu})$ is $g_{\lambda \mu \nu} f^\nu$ or 0.
\item For every $B_\nu \in \text{SYT}(\nu)$, the multiplicity of $B_\nu$ in  $Q(\Gamma_\lambda \circ \Gamma_\mu)$ is $g_{\lambda \mu \nu}$.
\end{list}
Here, $\text{SYT}(\nu)$ denotes the set of  standard Young tableau  of shape $\nu$ and $f^\nu := |\text{SYT}(\nu)|$.

\begin{theorem}[Lascoux's Kronecker Rule \cite{Lascoux}]
If $\lambda$ and  $\mu$ are hook shapes, then (A) and (B) hold for all $\nu$.
\end{theorem}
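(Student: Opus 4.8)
The plan is to split the assertion into a combinatorial half, concerning the multisets $P(\Gamma_\lambda\circ\Gamma_\mu)$ and $Q(\Gamma_\lambda\circ\Gamma_\mu)$, and a representation-theoretic half, which computes $g_{\lambda\mu\nu}$ for hook $\lambda,\mu$ independently; the two are then matched. It is useful to note first that, taken together, (A) and (B) are equivalent to the single multiset identity $\Gamma_\lambda\circ\Gamma_\mu = \bigsqcup_{\nu} g_{\lambda\mu\nu}\cdot\{w\in\S_n : P(w)=T_\nu\}$ (as multisets), for some standard tableau $T_\nu$ of shape $\nu$ which may depend on $\lambda$, $\mu$ and $\nu$: indeed, (A) forces the shape-$\nu$ part of $P(\Gamma_\lambda\circ\Gamma_\mu)$ to consist of $g_{\lambda\mu\nu}f^\nu$ copies of each tableau in some subset of $\text{SYT}(\nu)$, comparing the resulting permutation count with the one forced by (B) shows that subset is a single tableau $T_\nu$ (when $g_{\lambda\mu\nu}\neq 0$), and then (B) says exactly that the $f^\nu$ permutations with insertion tableau $T_\nu$ each appear with multiplicity $g_{\lambda\mu\nu}$. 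So the target is this cell decomposition.

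For the combinatorial half, I would make $\Gamma_\lambda$ explicit for a hook $\lambda=(n-a,1^{a})$. Unwinding RSK, $Z_\lambda^{\stand}$ has first row $1\,2\cdots(n-a)$ and first column $1,\,n{-}a{+}1,\,\dots,\,n$, so $w\in\Gamma_\lambda$ precisely when the one-line word of $w$ is a shuffle of a fixed increasing word on the ``arm'' values with a fixed decreasing word on the complementary ``leg'' values, interleaved exactly so that the leg letters are the ones bumped from the first row during insertion. Given such a description, $\Gamma_\lambda\circ\Gamma_\mu$ is the multiset of composites $u\circ v$ of two such shuffles, and the heart of the matter is a lemma computing $P(u\circ v)$ and $Q(u\circ v)$ directly from the shuffle data of $u$ and $v$. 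A natural way to organize this is to mark the letters of $u\circ v$ coming from leg positions of $u$ or $v$ as \emph{barred}, realizing $\Gamma_\lambda\circ\Gamma_\mu$ as a set of colored words, and then to insert these colored words --- which is exactly the setting of Haiman's mixed insertion, the two-hook case serving as the base case for the more general machinery developed below. The desired output is the cell decomposition above, with $g_{\lambda\mu\nu}$ provisionally replaced by an explicit nonnegative combinatorial count $c_{\lambda\mu\nu}$ recording how the arm and leg of $\lambda$, and of $\mu$, get distributed among the rows and columns of $\nu$.

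For the representation-theoretic half, recall the hook Jacobi--Trudi identity $s_{(n-a,1^{a})}=\sum_{j=0}^{a}(-1)^{j}h_{n-a+j}\,e_{a-j}$, so that $M_{(n-a,1^{a})}$ is an alternating $\ZZ$-combination of the induced modules $\ind_{\S_{p}\times\S_{q}}^{\S_n}(\mathbf{1}\boxtimes\sgn)$ with $p+q=n$; hence so is $M_\lambda\otimes M_\mu$. Each summand $\big(\ind_{\S_{p}\times\S_{q}}^{\S_n}\varepsilon\big)\otimes\big(\ind_{\S_{r}\times\S_{s}}^{\S_n}\varepsilon'\big)$ decomposes, by the Mackey formula, as a sum over the finitely many $2\times2$ nonnegative integer matrices with row sums $(p,q)$ and column sums $(r,s)$ of modules induced from the associated Young subgroups, and these in turn decompose into irreducibles with Kostka multiplicities. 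Collecting terms and absorbing the alternating signs yields a closed positive formula for $g_{\lambda\mu\nu}$ --- this is the hook Kronecker rule of Remmel--Whitehead and of Rosas. The final step is to verify $c_{\lambda\mu\nu}=g_{\lambda\mu\nu}$, a finite symmetric-function identity.

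The hard part is the combinatorial lemma of the second paragraph. RSK does not interact simply with multiplication in $\S_n$, so $P(u\circ v)$ and $Q(u\circ v)$ are unmanageable in general, and one must use essentially that $u$ and $v$ are each shuffles of a single increasing run with a single decreasing run. The subtlest point is the rigidity encoded in (A) --- that every composite $u\circ v$ of a given shape $\nu$ lands in one and the same right cell $\{w:P(w)=T_\nu\}$ --- which must be extracted from a careful analysis of how the few bumping paths created by the leg letters propagate through the insertion; this is precisely what breaks down once $\lambda$ or $\mu$ has a corner off the first row and first column, explaining why the statement is special to hooks. The representation-theoretic half is, by contrast, a routine finite computation once the Jacobi--Trudi expansion is in hand.
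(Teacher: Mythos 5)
First, a point of reference: the paper does not prove this theorem. It is quoted, with attribution, from Lascoux's paper \cite{Lascoux} (see also Garsia--Remmel \cite{GarsiaRemmel}), and nothing later in the paper depends on a proof of it. The closest the paper comes is \textsection\ref{ss comparison with Lascoux}, where $\Gamma_\lambda\circ\Gamma_{\mu(d)}$ is re-encoded as the set of colored words $\text{CWL}^-_{Z_\lambda^{\stand},d}$ and the author states that even the comparison with his own rule is only known by computing both sides explicitly. So there is no ``paper proof'' to match yours against, and your write-up has to stand on its own.

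On its own terms, your reduction of (A) and (B) to the single cell-decomposition identity is correct (the counting argument forcing a unique insertion tableau per nonzero shape is right), and the two-half architecture --- an explicit evaluation of $P(u\circ v)$ and $Q(u\circ v)$ on one side, and the Remmel--Whitehead/Rosas positive formula for hook-hook Kronecker coefficients via Jacobi--Trudi and Littlewood's internal-product formula on the other --- is a legitimate way to organize a proof. But as written there is a genuine gap: the combinatorial lemma that carries all the content is named, not proved. You correctly locate the difficulty in the rigidity of (A) (all composites $u\circ v$ of shape $\nu$ must share one insertion tableau) and say it must be extracted from the bumping paths, but no argument is given, and this is precisely the theorem. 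There is also a structural obstacle in the mechanism you propose: barring the letters of $u\circ v$ coming from leg positions ``of $u$ or of $v$'' conflates two independent decorations. The paper's own encoding (see \eqref{e u circ v CWL}) records only the hook structure of the factor $v\in\Gamma_{\mu(d)}$ --- right multiplication by $v$ reverses a suffix of $u$ and shuffles it left, and the $d$ bars mark exactly those letters --- while $u$ remains arbitrary; capturing the hook structure of $\lambda$ as well would require a second coloring that mixed insertion, as used here, is not set up to track. Finally, the closing step ``verify $c_{\lambda\mu\nu}=g_{\lambda\mu\nu}$'' presupposes that the insertion analysis has produced a closed count $c_{\lambda\mu\nu}$, which is exactly what is missing.
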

Lascoux \cite{Lascoux} and Garsia-Remmel \cite[\textsection6--7]{GarsiaRemmel} both investigate the extent to which this rule generalizes to other shapes. They give examples
showing that it does not extend beyond the hook hook case.
As far as we know, this approach to the Kronecker problem has not been pursued any further in the literature. 

Our computations indicate, however, that (B) is amazingly close to being true in general,  and we therefore believe that
there is much more to be gained from this experiment. To give an idea of how close (B) comes to holding for general shapes, let $m_{\lambda \, \mu \, B_\nu}$ denote the multiplicity in (B) and define the fractions
\[
\alpha_{\lambda \mu \nu} := \big|\big\{B_\nu \in \text{SYT}(\nu) : g_{\lambda \mu \nu} = m_{\lambda \, \mu \, B_\nu}\big\}\big| / f^\nu.
\]
Of the 42376 triples of partitions $\lambda, \mu, \nu$ of 10 for which either $g_{\lambda \mu \nu}$ or some $m_{\lambda \, \mu \, B_\nu}$ is nonzero, 11112 of them satisfy $\alpha_{\lambda \mu \nu} = 1$, 3703 of them satisfy $\alpha_{\lambda \mu \nu} \in [\frac{9}{10},1)$, etc., as indicated below. Note that the maximum size of a Kronecker coefficient for $n = 10$ is 117.
\[
\vspace{.7mm}
{\footnotesize
\begin{array}{rcccccccccccccc}
\{0\}\!&\!(0,\frac{1}{10})\!&\![\frac{1}{10},\frac{2}{10})\!&\![\frac{2}{10},\frac{3}{10})\!&\![\frac{3}{10},\frac{4}{10})\!&\![\frac{4}{10},\frac{5}{10})\!&\![\frac{5}{10},\frac{6}{10})\!&\![\frac{6}{10},\frac{7}{10})\!&\![\frac{7}{10},\frac{8}{10})\!&\![\frac{8}{10},\frac{9}{10})\!&\![\frac{9}{10},1)\!&\!\{1\}\\[2mm]
231\!&\!1558\!&\!3801\!&\!3413\!&\!2997\!&\!2792\!&\!2838\!&\!3216\!&\!3129\!&\!3586\!&\!3703\!&\!11112
\end{array}}
\]

This ``approximate rule'' does even better when  $\mu$ is a hook shape and, in fact, we conjecture that (B) holds for any $\nu$ when  $\lambda_2 \leq 2$ and $\mu$ is a hook shape.
While this procedure only sometimes produces a multiset of permutations whose number is $g_{\lambda \mu \nu}$, when it does, it somehow miraculously avoids the difficulty encountered in many positivity problems in algebraic combinatorics: \emph{a quantity that is known to be nonnegative is easily expressed as the difference in cardinality of two natural sets of combinatorial objects but finding an injection from the smaller of these sets to the larger is extremely difficult}.

This paper gives a way of modifying $\Gamma_\lambda \circ \Gamma_\mu$ in the case $\mu$ is a hook shape, using colored words and mixed insertion, to obtain a positive combinatorial formula for Kronecker coefficients for one hook shape and two arbitrary shapes.
We now outline this rule.


A \emph{colored word} is a word in the alphabet of barred letters  $\{\crc{1},\crc{2}, \cdots\}$ and unbarred letters $\{1,2, \cdots\}$.
Let  $w$ be a colored word. The \emph{total color} of $w$ is the number of barred letters in  $w$.  Define $w^\plain$ to be the ordinary word formed from $w$ by shuffling the barred letters to the left and then removing their bars.
We say that $w$ is \emph{Yamanouchi} of content $\lambda$ if $w^\plain$ is Yamanouchi of content $\lambda$.   For example, if $w = 1\, \crc{3}\, \crc{1}\, 1\, \crc{2}\, \crc{2}\, 2\, 1$, then  $w^\plain = 3\, 1 \, 2\, 2\, 1\, 1\, 2\, 1$,  and these are Yamanouchi of content  $(4,3,1)$.

Set $\mu(d) := (n-d,1^d)$.
We define $\text{CYW}_{\lambda, d}$ to be the set of colored Yamanouchi words of content $\lambda$ and total color $d$;
Figure \ref{f cw} depicts the case  $\lambda = (3,1,1)$,  $d=2$.
This replaces the multiset of permutations $\Gamma_{\lambda} \circ \big(\Gamma_{\mu(d)} \sqcup \Gamma_{\mu(d-1)}\big)$ in the experiment above.   This will be  fully
explained in \textsection\ref{ss comparison with Lascoux},  but for now we remark that if  $P(v) = Z_\mu^\stand$ has hook shape, then we can color $u \circ v$ in such a way that allows us to recover  $u$ and  $v$ from  $u \circ v$.


Mixed insertion is a generalization of Schensted insertion to colored words, developed by Haiman in \cite{Hmixed}. Its chief advantage for this work is that it is simultaneously compatible with any ordering of colored letters in which $1 < 2 < \cdots$ and $\crc{1} < \crc{2} < \cdots$ (see Proposition \ref{p conversion} for a precise statement).
Let $\text{CYT}_{\lambda, d}$ (resp. $\text{CYT}^\prec_{\lambda, d}$) denote the set of mixed insertion tableaux of the words in $\text{CYW}_{\lambda, d}$ using the \emph{natural order} $\crc{1}<1<\crc{2}<2\cdots$  (resp.  the \emph{small bar order} $\crc{1}\prec\crc{2}\prec\cdots\prec1\prec2\cdots$); see Figure \ref{f ct 311}.
For any set of tableaux $\text{ST}$, let $\text{ST}(\nu)$ denote the subset of $\text{ST}$ consisting of tableaux of shape $\nu$.

It is easy to show that $\text{CYT}^\prec_{\lambda, d}(\nu)$ has size  $\gcoef + g_{\lambda\, \mu(d-1)\, \nu}$  (Proposition \ref{p arm leg}).  This is in some sense not new. For example, the $\text{CYT}^\prec_{\lambda,d}$ are closely related to the $(k,l)$ tableaux and hook Schur functions of Berele-Regev \cite{BereleRegev} (see Remark \ref{r hook Schur functions}).  What is genuinely new here is the use of mixed insertion  for both the orders  $<$ and  $\prec$.
The miracle in this setup is that it is easy to identify a subset of $ \text{CYT}_{\lambda, d}(\nu)$ having cardinality $\gcoef$: it is the subset consisting of those tableaux with unbarred southwest corner (see Figure \ref{f CYT 321}).  We call this combinatorial formula for $\gcoef$ Hook Kronecker Rule I.

We define the \emph{color lowering operator} to be the operation that removes the bar on the southwest entry of a colored tableau (if it is barred). One of the main tasks in this paper is to understand the corresponding operator on colored words. This operator is more subtle and involves rotating a certain subword once to the right. Once this operator is understood, the proof of Hook Kronecker Rule I is not difficult; it also allows us to prove two somewhat more versatile versions of this rule (Hook Kronecker Rules II and III). We also show that Hook Kronecker Rule I easily generalizes to $\nu$ a skew shape (Hook Kronecker Rule IV).

This paper is organized as follows: Section \ref{s colored tableaux and insertion algorithms} gives the necessary background on colored tableaux and mixed insertion and also establishes (\textsection\ref{ss the operators bneg and plain}) some important facts about the operator ${}^\plain$ and a related operator ${}^\bneg$. In Section \ref{s kronecker coefficients for one hook shape}, we show that $|\text{CYT}_{\lambda, d}(\nu)| = \gcoef + g_{\lambda\, \mu(d-1)\, \nu}$ and officially state Hook Kronecker Rule I. In Section \ref{s color raising and lowering operators on words}, we define a color lowering operator on words and relate it to the color lowering operator; we then use this to complete the proof of our rule. Finally, Section \ref{s more versions and symmetries of the Hook Kronecker Rule} gives three more versions of our rule, discusses symmetries of these rules, and explains how they are related to Lascoux's Kronecker Rule.

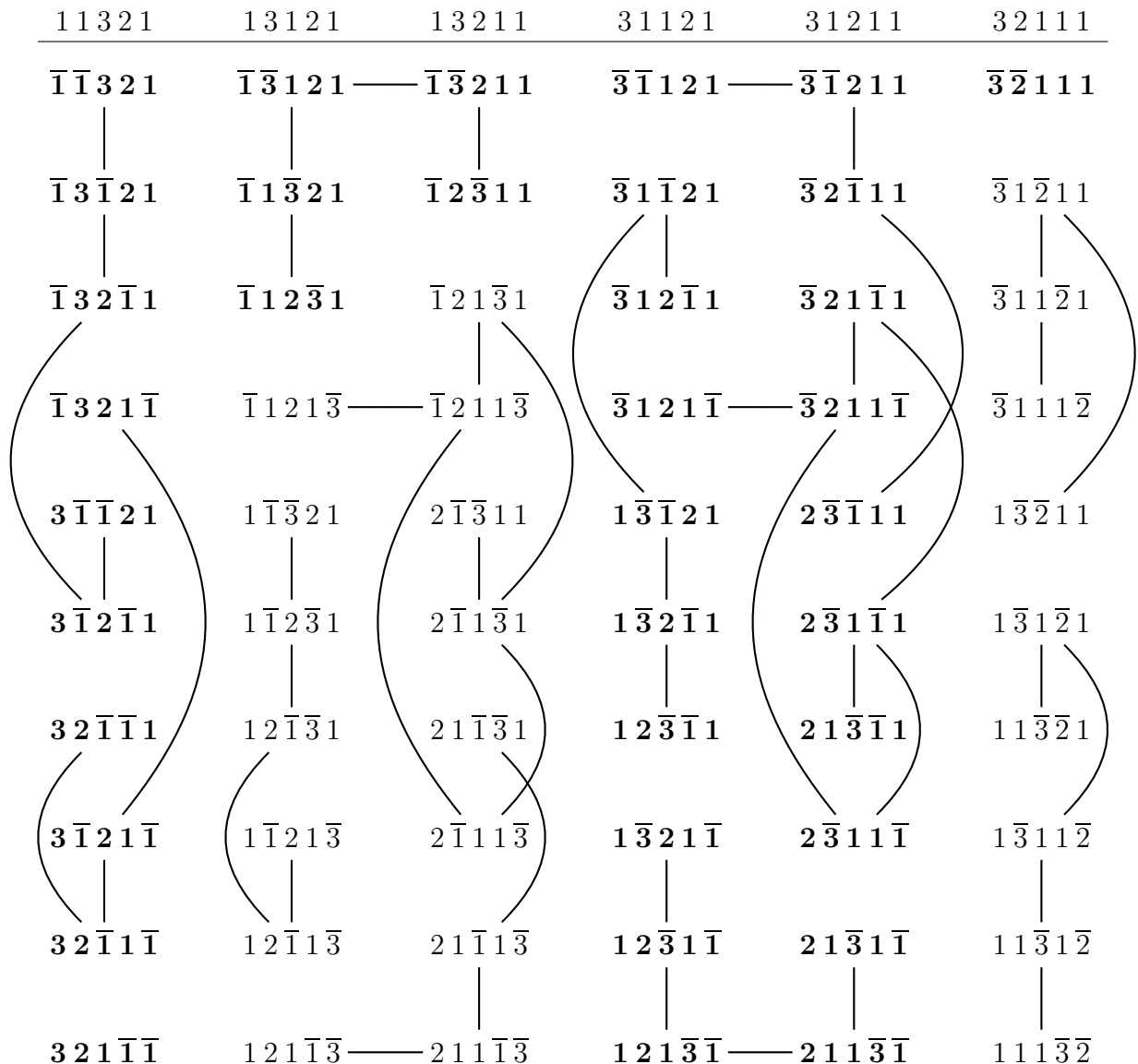
\begin{figure}
\centerfloat
\begin{tikzpicture}[xscale = 2.7,yscale = 1.55]
\tikzstyle{vertex}=[inner sep=0pt, outer sep=3pt]
\tikzstyle{aedge} = [draw, thin, ->,black]
\tikzstyle{bedge} = [draw, -, black]
\tikzstyle{edge} = [draw, thick, -,black]
\tikzstyle{LabelStyleH} = [text=black, anchor=south, near start]
\tikzstyle{LabelStyleV} = [text=black, anchor=east, near start]

\node[vertex] (hlineleft) at (-0.4,9.4){};
\node[vertex] (hlineright) at (5.4,9.4){};
\draw[bedge] (hlineleft) to (hlineright);
\node[vertex] (v1) at (5,9.59){$3\:2\:1\:1\:1$};
\node[vertex] (v2) at (4,9.59){$3\:1\:2\:1\:1$};
\node[vertex] (v3) at (3,9.59){$3\:1\:1\:2\:1$};
\node[vertex] (v4) at (2,9.59){$1\:3\:2\:1\:1$};
\node[vertex] (v5) at (1,9.59){$1\:3\:1\:2\:1$};
\node[vertex] (v6) at (0,9.59){$1\:1\:3\:2\:1$};
\node[vertex] (v1) at (0,6){$\mathbf{\crc{1}\:3\:2\:1\:\crc{1}}$};
\node[vertex] (v2) at (0,2){$\mathbf{3\:\crc{1}\:2\:1\:\crc{1}}$};
\node[vertex] (v3) at (0,1){$\mathbf{3\:2\:\crc{1}\:1\:\crc{1}}$};
\node[vertex] (v4) at (0,0){$\mathbf{3\:2\:1\:\crc{1}\:\crc{1}}$};
\node[vertex] (v5) at (1,6){$\crc{1}\:1\:2\:1\:\crc{3}$};
\node[vertex] (v6) at (1,2){$1\:\crc{1}\:2\:1\:\crc{3}$};
\node[vertex] (v7) at (1,1){$1\:2\:\crc{1}\:1\:\crc{3}$};
\node[vertex] (v8) at (1,0){$1\:2\:1\:\crc{1}\:\crc{3}$};
\node[vertex] (v9) at (2,6){$\crc{1}\:2\:1\:1\:\crc{3}$};
\node[vertex] (v10) at (2,2){$2\:\crc{1}\:1\:1\:\crc{3}$};
\node[vertex] (v11) at (2,1){$2\:1\:\crc{1}\:1\:\crc{3}$};
\node[vertex] (v12) at (2,0){$2\:1\:1\:\crc{1}\:\crc{3}$};
\node[vertex] (v13) at (3,6){$\mathbf{\crc{3}\:1\:2\:1\:\crc{1}}$};
\node[vertex] (v14) at (3,2){$\mathbf{1\:\crc{3}\:2\:1\:\crc{1}}$};
\node[vertex] (v15) at (3,1){$\mathbf{1\:2\:\crc{3}\:1\:\crc{1}}$};
\node[vertex] (v16) at (3,0){$\mathbf{1\:2\:1\:\crc{3}\:\crc{1}}$};
\node[vertex] (v17) at (4,6){$\mathbf{\crc{3}\:2\:1\:1\:\crc{1}}$};
\node[vertex] (v18) at (4,2){$\mathbf{2\:\crc{3}\:1\:1\:\crc{1}}$};
\node[vertex] (v19) at (4,1){$\mathbf{2\:1\:\crc{3}\:1\:\crc{1}}$};
\node[vertex] (v20) at (4,0){$\mathbf{2\:1\:1\:\crc{3}\:\crc{1}}$};
\node[vertex] (v21) at (5,6){$\crc{3}\:1\:1\:1\:\crc{2}$};
\node[vertex] (v22) at (5,2){$1\:\crc{3}\:1\:1\:\crc{2}$};
\node[vertex] (v23) at (5,1){$1\:1\:\crc{3}\:1\:\crc{2}$};
\node[vertex] (v24) at (5,0){$1\:1\:1\:\crc{3}\:\crc{2}$};
\node[vertex] (v25) at (0,9){$\mathbf{\crc{1}\:\crc{1}\:3\:2\:1}$};
\node[vertex] (v26) at (0,8){$\mathbf{\crc{1}\:3\:\crc{1}\:2\:1}$};
\node[vertex] (v27) at (0,7){$\mathbf{\crc{1}\:3\:2\:\crc{1}\:1}$};
\node[vertex] (v28) at (0,5){$\mathbf{3\:\crc{1}\:\crc{1}\:2\:1}$};
\node[vertex] (v29) at (0,4){$\mathbf{3\:\crc{1}\:2\:\crc{1}\:1}$};
\node[vertex] (v30) at (0,3){$\mathbf{3\:2\:\crc{1}\:\crc{1}\:1}$};
\node[vertex] (v31) at (1,9){$\mathbf{\crc{1}\:\crc{3}\:1\:2\:1}$};
\node[vertex] (v32) at (1,8){$\mathbf{\crc{1}\:1\:\crc{3}\:2\:1}$};
\node[vertex] (v33) at (1,7){$\mathbf{\crc{1}\:1\:2\:\crc{3}\:1}$};
\node[vertex] (v34) at (1,5){$1\:\crc{1}\:\crc{3}\:2\:1$};
\node[vertex] (v35) at (1,4){$1\:\crc{1}\:2\:\crc{3}\:1$};
\node[vertex] (v36) at (1,3){$1\:2\:\crc{1}\:\crc{3}\:1$};
\node[vertex] (v37) at (2,9){$\mathbf{\crc{1}\:\crc{3}\:2\:1\:1}$};
\node[vertex] (v38) at (2,8){$\mathbf{\crc{1}\:2\:\crc{3}\:1\:1}$};
\node[vertex] (v39) at (2,7){$\crc{1}\:2\:1\:\crc{3}\:1$};
\node[vertex] (v40) at (2,5){$2\:\crc{1}\:\crc{3}\:1\:1$};
\node[vertex] (v41) at (2,4){$2\:\crc{1}\:1\:\crc{3}\:1$};
\node[vertex] (v42) at (2,3){$2\:1\:\crc{1}\:\crc{3}\:1$};
\node[vertex] (v43) at (3,9){$\mathbf{\crc{3}\:\crc{1}\:1\:2\:1}$};
\node[vertex] (v44) at (3,8){$\mathbf{\crc{3}\:1\:\crc{1}\:2\:1}$};
\node[vertex] (v45) at (3,7){$\mathbf{\crc{3}\:1\:2\:\crc{1}\:1}$};
\node[vertex] (v46) at (3,5){$\mathbf{1\:\crc{3}\:\crc{1}\:2\:1}$};
\node[vertex] (v47) at (3,4){$\mathbf{1\:\crc{3}\:2\:\crc{1}\:1}$};
\node[vertex] (v48) at (3,3){$\mathbf{1\:2\:\crc{3}\:\crc{1}\:1}$};
\node[vertex] (v49) at (4,9){$\mathbf{\crc{3}\:\crc{1}\:2\:1\:1}$};
\node[vertex] (v50) at (4,8){$\mathbf{\crc{3}\:2\:\crc{1}\:1\:1}$};
\node[vertex] (v51) at (4,7){$\mathbf{\crc{3}\:2\:1\:\crc{1}\:1}$};
\node[vertex] (v52) at (4,5){$\mathbf{2\:\crc{3}\:\crc{1}\:1\:1}$};
\node[vertex] (v53) at (4,4){$\mathbf{2\:\crc{3}\:1\:\crc{1}\:1}$};
\node[vertex] (v54) at (4,3){$\mathbf{2\:1\:\crc{3}\:\crc{1}\:1}$};
\node[vertex] (v55) at (5,9){$\mathbf{\crc{3}\:\crc{2}\:1\:1\:1}$};
\node[vertex] (v56) at (5,8){$\crc{3}\:1\:\crc{2}\:1\:1$};
\node[vertex] (v57) at (5,7){$\crc{3}\:1\:1\:\crc{2}\:1$};
\node[vertex] (v58) at (5,5){$1\:\crc{3}\:\crc{2}\:1\:1$};
\node[vertex] (v59) at (5,4){$1\:\crc{3}\:1\:\crc{2}\:1$};
\node[vertex] (v60) at (5,3){$1\:1\:\crc{3}\:\crc{2}\:1$};
\draw[edge, bend left=25] (v1) to (v2);
\draw[edge] (v2) to (v3);
\draw[edge, bend left=30] (v3) to (v30);
\draw[edge] (v5) to (v9);
\draw[edge] (v6) to (v7);
\draw[edge, bend left=30] (v7) to (v36);
\draw[edge] (v8) to (v12);
\draw[edge, bend right=25] (v9) to (v10);
\draw[edge] (v9) to (v39);
\draw[edge, bend right=30] (v10) to (v41);
\draw[edge] (v11) to (v12);
\draw[edge, bend right=30] (v11) to (v42);
\draw[edge] (v13) to (v17);
\draw[edge] (v14) to (v15);
\draw[edge] (v15) to (v16);
\draw[edge] (v16) to (v20);
\draw[edge, bend right=25] (v17) to (v18);
\draw[edge] (v17) to (v51);
\draw[edge, bend right=30] (v18) to (v53);
\draw[edge] (v19) to (v20);
\draw[edge] (v21) to (v57);
\draw[edge, bend right=30] (v22) to (v59);
\draw[edge] (v22) to (v23);
\draw[edge] (v23) to (v24);
\draw[edge] (v25) to (v26);
\draw[edge] (v26) to (v27);
\draw[edge, bend right=30] (v27) to (v29);
\draw[edge] (v28) to (v29);
\draw[edge] (v31) to (v32);
\draw[edge] (v31) to (v37);
\draw[edge] (v32) to (v33);
\draw[edge] (v34) to (v35);
\draw[edge] (v35) to (v36);
\draw[edge] (v37) to (v38);
\draw[edge, bend left=30] (v39) to (v41);
\draw[edge] (v40) to (v41);
\draw[edge] (v43) to (v49);
\draw[edge, bend right=30] (v44) to (v46);
\draw[edge] (v44) to (v45);
\draw[edge] (v46) to (v47);
\draw[edge] (v47) to (v48);
\draw[edge] (v49) to (v50);
\draw[edge, bend left=35] (v50) to (v52);
\draw[edge, bend left=35] (v51) to (v53);
\draw[edge] (v53) to (v54);
\draw[edge, bend left=30] (v56) to (v58);
\draw[edge] (v56) to (v57);
\draw[edge] (v59) to (v60);
\end{tikzpicture}
\caption{The set $\text{CYW}_{(3,1,1), 2}$. Edges are Knuth transformations of the words obtained by applying ${}^\bneg$. Column labels correspond to applying ${}^\plain$ and the positions of the barred letters are constant along rows. The color raisable words are shown in bold.}
\label{f cw}
\end{figure}

\begin{figure}
\centerfloat
\begin{tikzpicture}[xscale = 2.8,yscale = 1.5]
\tikzstyle{vertex}=[inner sep=0pt, outer sep=3pt]
\tikzstyle{aedge} = [draw, thin, ->,black]
\tikzstyle{bedge} = [draw, -, black]
\tikzstyle{edge} = [draw, thick, -,black]
\tikzstyle{LabelStyleH} = [text=black, anchor=south, near start]
\tikzstyle{LabelStyleV} = [text=black, anchor=east, near start]

\node[vertex] (v1) at (5,9.8){$3\:2\:1\:1\:1$};
\node[vertex] (v2) at (4,9.8){$3\:1\:2\:1\:1$};
\node[vertex] (v3) at (3,9.8){$3\:1\:1\:2\:1$};
\node[vertex] (v4) at (2,9.8){$1\:3\:2\:1\:1$};
\node[vertex] (v5) at (1,9.8){$1\:3\:1\:2\:1$};
\node[vertex] (v6) at (0,9.8){$1\:1\:3\:2\:1$};
\node[vertex] (v1) at (0,6){${ \tiny \boldtableau{
\crc{1}&1\\\crc{1}\\2\\3\\}} $};
\node[vertex] (v2) at (0,2){$\bullet$};
\node[vertex] (v3) at (0,1){$\bullet$};
\node[vertex] (v4) at (0,0){${ \tiny \boldtableau{
\crc{1}\\\crc{1}\\1\\2\\3\\}} $};
\node[vertex] (v5) at (1,6){$\bullet$};
\node[vertex] (v6) at (1,2){$\bullet$};
\node[vertex] (v7) at (1,1){$\bullet$};
\node[vertex] (v8) at (1,0){$\bullet$};
\node[vertex] (v9) at (2,6){$\bullet$};
\node[vertex] (v10) at (2,2){$\bullet$};
\node[vertex] (v11) at (2,1){$\bullet$};
\node[vertex] (v12) at (2,0){$\bullet$};
\node[vertex] (v13) at (3,6){$\bullet$};
\node[vertex] (v14) at (3,2){${ \tiny \boldtableau{
\crc{1}&1\\1&\crc{3}\\2\\}} $};
\node[vertex] (v15) at (3,1){$\bullet$};
\node[vertex] (v16) at (3,0){$\bullet$};
\node[vertex] (v17) at (4,6){$\bullet$};
\node[vertex] (v18) at (4,2){$\bullet$};
\node[vertex] (v19) at (4,1){$\bullet$};
\node[vertex] (v20) at (4,0){$\bullet$};
\node[vertex] (v21) at (5,6){$\bullet$};
\node[vertex] (v22) at (5,2){$\bullet$};
\node[vertex] (v23) at (5,1){$\bullet$};
\node[vertex] (v24) at (5,0){$\bullet$};
\node[vertex] (v25) at (0,9){${ \tiny \boldtableau{
\crc{1}&1\\\crc{1}&2\\3\\}} $};
\node[vertex] (v26) at (0,8){$\bullet$};
\node[vertex] (v27) at (0,7){$\bullet$};
\node[vertex] (v28) at (0,5){$\bullet$};
\node[vertex] (v29) at (0,4){$\bullet$};
\node[vertex] (v30) at (0,3){$\bullet$};
\node[vertex] (v31) at (1,9){${ \tiny \boldtableau{
\crc{1}&1&1\\2&\crc{3}\\}} $};
\node[vertex] (v32) at (1,8){$\bullet$};
\node[vertex] (v33) at (1,7){$\bullet$};
\node[vertex] (v34) at (1,5){${ \tiny \tableau{
\crc{1}&1\\1&2\\\crc{3}\\}} $};
\node[vertex] (v35) at (1,4){$\bullet$};
\node[vertex] (v36) at (1,3){$\bullet$};
\node[vertex] (v37) at (2,9){$\bullet$};
\node[vertex] (v38) at (2,8){$\bullet$};
\node[vertex] (v39) at (2,7){${ \tiny \tableau{
\crc{1}&1&1\\2\\\crc{3}\\}} $};
\node[vertex] (v40) at (2,5){$\bullet$};
\node[vertex] (v41) at (2,4){$\bullet$};
\node[vertex] (v42) at (2,3){${ \tiny \tableau{
\crc{1}&1\\1\\2\\\crc{3}\\}} $};
\node[vertex] (v43) at (3,9){${ \tiny \boldtableau{
\crc{1}&1&1&\crc{3}\\2\\}} $};
\node[vertex] (v44) at (3,8){${ \tiny \boldtableau{
\crc{1}&1&\crc{3}\\1&2\\}} $};
\node[vertex] (v45) at (3,7){$\bullet$};
\node[vertex] (v46) at (3,5){$\bullet$};
\node[vertex] (v47) at (3,4){$\bullet$};
\node[vertex] (v48) at (3,3){$\bullet$};
\node[vertex] (v49) at (4,9){$\bullet$};
\node[vertex] (v50) at (4,8){$\bullet$};
\node[vertex] (v51) at (4,7){${ \tiny \boldtableau{
\crc{1}&1&\crc{3}\\1\\2\\}} $};
\node[vertex] (v52) at (4,5){$\bullet$};
\node[vertex] (v53) at (4,4){$\bullet$};
\node[vertex] (v54) at (4,4){$\bullet$};
\node[vertex] (v55) at (5,9){${ \tiny \boldtableau{
1&1&1&\crc{2}&\crc{3}\\}} $};
\node[vertex] (v56) at (5,8){${ \tiny \tableau{
1&1&1&\crc{3}\\\crc{2}\\}} $};
\node[vertex] (v57) at (5,7){$\bullet$};
\node[vertex] (v58) at (5,5){$\bullet$};
\node[vertex] (v59) at (5,4){${ \tiny \tableau{
1&1&1\\\crc{2}&\crc{3}\\}} $};
\node[vertex] (v60) at (5,3){$\bullet$};
\draw[edge, bend left=25] (v1) to (v2);
\draw[edge] (v2) to (v3);
\draw[edge, bend left=30] (v3) to (v30);
\draw[edge] (v5) to (v9);
\draw[edge] (v6) to (v7);
\draw[edge, bend left=30] (v7) to (v36);
\draw[edge] (v8) to (v12);
\draw[edge, bend right=25] (v9) to (v10);
\draw[edge] (v9) to (v39);
\draw[edge, bend right=30] (v10) to (v41);
\draw[edge] (v11) to (v12);
\draw[edge, bend right=25] (v11) to (v42);
\draw[edge] (v13) to (v17);
\draw[edge] (v14) to (v15);
\draw[edge] (v15) to (v16);
\draw[edge] (v16) to (v20);
\draw[edge, bend right=20] (v17) to (v18);
\draw[edge] (v17) to (v51);
\draw[edge, bend right=30] (v18) to (v53);
\draw[edge] (v19) to (v20);
\draw[edge] (v21) to (v57);
\draw[edge, bend right=30] (v22) to (v59);
\draw[edge] (v22) to (v23);
\draw[edge] (v23) to (v24);
\draw[edge] (v25) to (v26);
\draw[edge] (v26) to (v27);
\draw[edge, bend right=30] (v27) to (v29);
\draw[edge] (v28) to (v29);
\draw[edge] (v31) to (v32);
\draw[edge] (v31) to (v37);
\draw[edge] (v32) to (v33);
\draw[edge] (v34) to (v35);
\draw[edge] (v35) to (v36);
\draw[edge] (v37) to (v38);
\draw[edge, bend left=30] (v39) to (v41);
\draw[edge] (v40) to (v41);
\draw[edge] (v43) to (v49);
\draw[edge, bend right=25] (v44) to (v46);
\draw[edge] (v44) to (v45);
\draw[edge] (v46) to (v47);
\draw[edge] (v47) to (v48);
\draw[edge] (v49) to (v50);
\draw[edge, bend left=35] (v50) to (v52);
\draw[edge, bend left=35] (v51) to (v53);
\draw[edge] (v53) to (v54);
\draw[edge, bend left=25] (v56) to (v58);
\draw[edge] (v56) to (v57);
\draw[edge] (v59) to (v60);
\end{tikzpicture}
\caption{The mixed insertion tableaux of the words in the previous figure (which are constant on connected components). This set of tableaux is $\text{CYT}_{(3,1,1), 2}$ and the tableaux in bold are those with unbarred southwest corner ($\text{CYT}^-_{(3,1,1), 2}$).}
\label{f ct 311}
\end{figure}
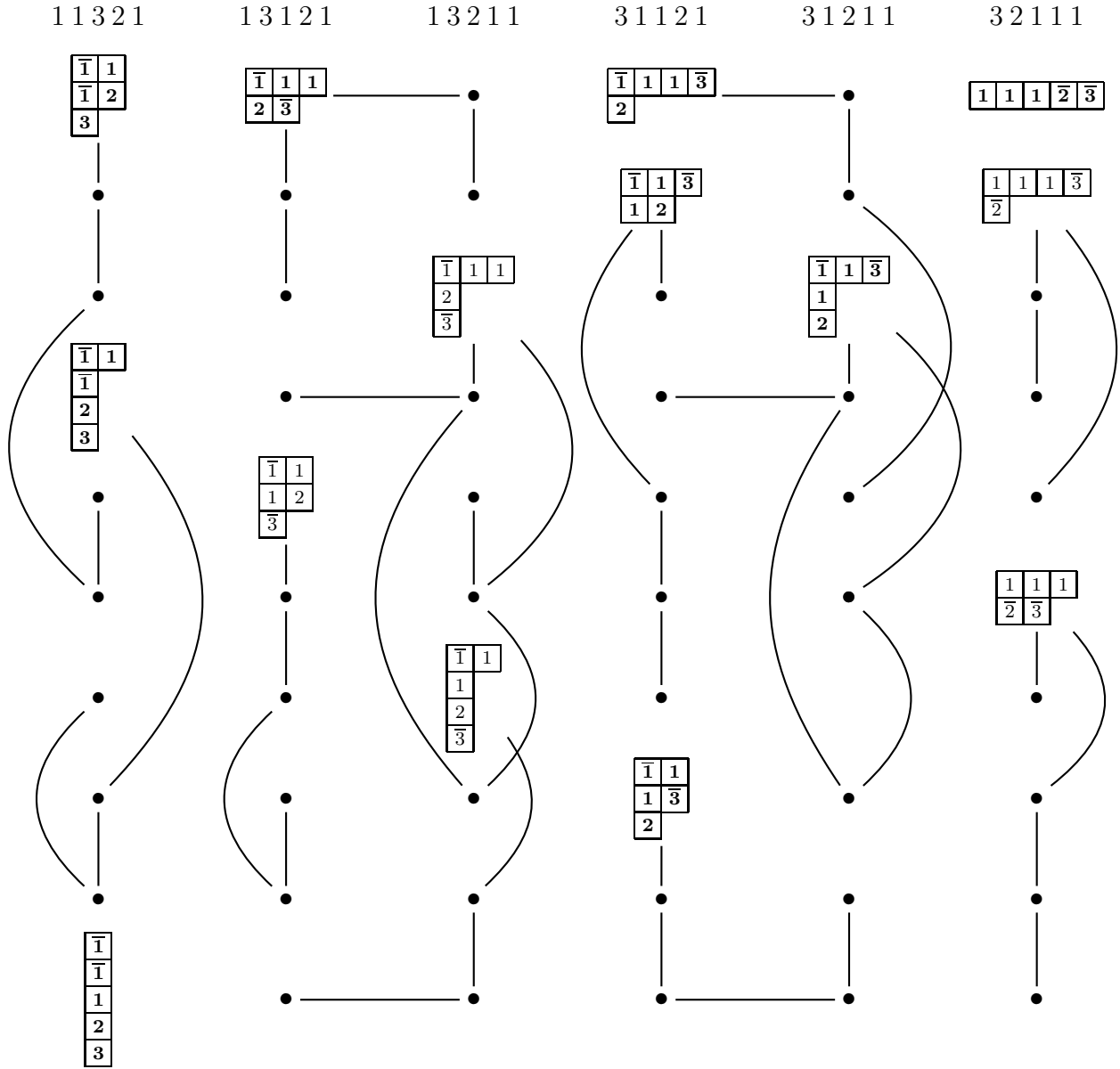
\section{Colored tableaux and Haiman's insertion algorithms} \label{s colored tableaux and insertion algorithms}
We begin this section with basic definitions of colored words and tableaux, and operators on these objects (\textsection\ref{ss words}--\ref{ss tableaux}).  Then, after fixing some notation for Schensted insertion (\textsection\ref{ss plactic equivalence}), we review Haiman's insertion algorithms and conversion \cite{Hmixed} (\textsection\ref{ss Haiman's insertion algorithms}--\ref{ss conversion}).  Finally, we establish some important facts about the operator ${}^\plain$ and a related operator ${}^\bneg$ (\textsection\ref{ss the operators bneg and plain}).
Almost all of the results in this section are restatements or easy consequences of results from \cite{Hmixed}.
Shimozono and White  \cite{SWspin} also give a nice exposition of this background, and we follow much of their notation.

\subsection{Words}
\label{ss words}
A \emph{word} is a sequence of (not necessarily distinct) letters from some totally ordered alphabet.
A \emph{subword} of a word  $w_1 w_2 \cdots w_n$ is a word of the form  $w_{k_1} w_{k_2} \cdots w_{k_l}$,  $k_1 < k_2< \cdots < k_l$. We say that $i$ is the \emph{place} of  $w_{i}$ and  $\mathbf{k} = k_1 k_2 \cdots k_l$ is the \emph{place word} of $w_{k_1} w_{k_2} \cdots w_{k_l}$; we also set $w_\mathbf{k} = w_{k_1} w_{k_2} \cdots w_{k_l}$.

The set $\{1,2,\ldots\}$ is the \emph{alphabet of unbarred letters} or \emph{ordinary letters} and the set $\{\crc{1},\crc{2},\ldots\}$ is the \emph{alphabet of barred letters}.
An \emph{ordinary word} is a word in the alphabet of ordinary letters.
A \emph{colored word} is a word in the alphabet $\mathcal{A} = \{\crc{1},\crc{2},\ldots \} \cup \{ 1,2,\ldots\}$  of barred and unbarred letters.
We typically write $w = w_1 w_2 \cdots w_n$ to denote a colored word of length  $n$, where each  $w_i$ denotes a colored letter which could be either barred or unbarred.
Also, we often use the symbol $x$ for an unbarred letter, while $\alpha, \beta$, and  $\eta$ are used for a colored letter which could be either barred or unbarred. For a colored letter $\alpha$, define $\alpha^* := \crc{x}$ if $\alpha = x$ and $\alpha^* := x$ if $\alpha = \crc{x}$.

Let $w = w_1 w_2 \cdots w_n$ be a colored word.  The \emph{total color} $\tc(w)$ of $w$ is the number of barred letters in $w$.
We write $\sub_{\crcempty}(w)$ for the subword of barred letters of $w$ and  $\sub_\varnothing(w)$ for the subword of unbarred letters.
We let $w^*$ denote the colored word $(w_1)^* (w_2)^* \cdots (w_n)^*$.
The ordinary word $w^\plain$ is formed from $w$ by shuffling the barred letters to the left and then removing their bars; precisely, $w^\plain = \sub_{\crcempty}(w)^*\sub_\varnothing(w)$. This operator will be studied further in \textsection\ref{ss the operators bneg and plain}.

The \emph{content} of an ordinary word $y$ is the sequence $(c_1, c_2, \ldots, c_m)$, where $c_i$ is the number of occurrences of $i$ in $y$ and $m$ is the largest letter of $y$.
The \emph{content} of a colored word $w$ is the content of $w^\plain$.
A \emph{colored permutation} is a colored word with content $(1^n)$.

The \emph{reverse} of a word $w = w_1 w_2 \cdots w_n$, denoted $w^{\rev}$, is the word $w_n w_{n-1} \cdots w_1$.
The \emph{upside-down word} of a colored permutation $v = v_1 v_2 \cdots v_n$, denoted $v^{\ud}$, is the colored permutation obtained by replacing each barred letter $\crc{x}$ by $\crc{n+1-x}$ and each unbarred letter $x$ by $n+1-x$.
The \emph{inverse} of a colored permutation $v$ is the colored permutation $v^\inv$ for which $(v^\inv)_i = j$ if  $v_j = i$ and $(v^\inv)_i = \crc{j}$ if $v_j = \crc{i}$.  If colored permutations are identified with signed permutation matrices (with barred letters corresponding to matrix entries equal to  $-1$), then the matrix for $v^\inv$ is just the transpose of the matrix for $v$.

\begin{example}
\label{ex colored word}
The colored word $w$ below has content $(4,4,1)$ and total color $5$.
The word $v$ below is a colored permutation, equal to the standardization $w^\stand$ of $w$ (defined below).
\[
\begin{array}{@{\hspace{-16mm}}rcr}
w & =\, & \crc{3}\ \crc{1}\ 2 \ 1 \ \crc{2}\ \crc{2}\ \crc{1}\ 2\ 1 \\[.6mm]
\sub_\varnothing(w) & =\, & 2\ 1\ 2\ 1 \\[.6mm]
\sub_{\crcempty}(w) & =\, & 3\ 1\ 2\ 2\ 1 \ \phantom{2\ 1\ 2\ 1}\\[.6mm]
w^\plain & =\, & 3\ 1\ 2\ 2\ 1\ 2\ 1\ 2\ 1 \\[.6mm]
w^* & =\, & 3\ 1\ \crc{2} \ \crc{1} \ 2\ 2\ 1\ \crc{2}\ \crc{1}\\[2.4mm]
v & =\, & \crc{9}\ \crc{1}\ 7\ 3\ \crc{5}\ \crc{6}\ \crc{2}\ 8\ 4 \\[.6mm]
v^{\rev} & =\, & 4\ 8\ \crc{2}\ \crc{6}\ \crc{5}\ 3\ 7\ \crc{1}\ \crc{9}\\[.6mm]
v^{\ud} & =\, & \crc{1}\ \crc{9}\ 3\ 7\ \crc{5}\ \crc{4}\ \crc{8}\ 2\ 6\\[.6mm]
v^\inv & =\, & \crc{2}\ \crc{7}\ 4\ 9\ \crc{5}\ \crc{6}\ 3\ 8\ \crc{1} \\[.6mm]
\end{array}
\]
\end{example}

We will work mostly with the following two orders on  $\mathcal{A}$:
\begin{align*}
\text{the \emph{natural order}} \ \ & \crc{1}<1<\crc{2}<2\cdots\\
\text{the \emph{small bar order}}\ \ & \crc{1}\prec\crc{2}\prec\crc{3}\prec\cdots\prec1\prec2\cdots.
\end{align*}
We reserve the symbol $\lessdot$ for an arbitrary total order on $\mathcal{A}$. Certain objects and operations in this paper are defined for any order $\lessdot$ and we indicate this by a superscript, i.e. $P^\lessdot_\mix$ will denote mixed insertion with respect to the order $\lessdot$; if no order is specified, then we mean the natural order $<$.

For any order  $\lessdot$ on  $\mathcal{A}$ and colored word $w$, the \emph{standardization} of  $w$ with respect to $\lessdot$, denoted $w^{\stand^\lessdot}$,
is the colored permutation obtained from $w$ by first relabeling, from left to right, the occurrences of the smallest letter in $w$ by  $1,\ldots,k$ (resp. $\crc{1},\ldots,\crc{k}$) if this letter is unbarred (resp. barred), then relabeling the occurrences of the next smallest letter of $w$ by $k+1,\ldots,k+k'$ (resp. $\crc{k+1},\ldots,\crc{k+k'}$) if this letter is unbarred (resp. barred), and so on.
For a colored word  $w$ and letter $\alpha$, $\sub_{\lessdoteq \alpha}(w)$ denotes the subword of $w$ consisting of the letters $\lessdoteq \, \alpha$.



\subsection{Tableaux} \label{ss tableaux}
A partition $\lambda$ of $n$ is a weakly decreasing sequence $ (\lambda_1, \ldots, \lambda_l)$ of nonnegative integers that sum to $n$.  We also write $\lambda \vdash n$  to mean that $\lambda$ is a partition of $n$.

The \emph{Ferrers diagram} or \emph{shape} of a partition $\lambda$ is the array of square cells, left-justified, with $\lambda_i$ cells in row $i$. Ferrers diagrams are drawn with the English (matrix-style) convention so that row (resp. column) labels start with 1 and increase from north to south (resp. west to east).
Write $\mu \subseteq \lambda$ if the shape of $\mu$ is contained in the shape of $\lambda$. If $\mu \subseteq \lambda$, then $\lambda/\mu$ denotes the \emph{skew shape} obtained by removing the cells of $\mu$ from the shape of $\lambda$.
The notation $\lambda \oplus \mu$ denotes the skew shape constructed by placing translates of shapes $\lambda$ and $\mu$ so that all cells of $\mu$ are above and to the right of all cells of $\lambda$. The conjugate partition $\lambda'$ of a partition $\lambda$ is the partition whose shape is the transpose of the shape of $\lambda$.

A \emph{tableau} $T$ of shape $\lambda/ \mu$ is the Ferrers diagram of $\lambda/\mu$ together with a letter occupying each of its cells.  The \emph{size} of $T$ is the number of cells of $T$, and $\sh(T)$ denotes the shape of $T$. The notation ${T}^{\transpose}$ denotes the transpose of $T$, so that $\sh({T}^{\transpose}) = \sh(T)'$.

Just as for shapes, $T \oplus U$ denotes the tableau constructed by placing translates of tableaux $T$ and $U$ so that all cells of $U$ are above and to the right of all cells of $T$. Given a cell $z$ and (skew) shape $\theta$, say that $z$ is \emph{addable to  $\theta$} if  $\theta \cap z = \varnothing$ and  $\theta \sqcup z$ is a skew shape. If $T$ is a tableau,  $\alpha$ a letter, and the cell  $z$ at position $(r,c)$ is addable to $\sh(T)$, then $T \sqcup {\tiny \tableau{\alpha}}_{(r,c)}$ denotes the result of adding the cell $z$ to $T$  and filling it with $\alpha$.



A \emph{semistandard tableau} or \emph{ordinary tableau} is a tableau in the alphabet of ordinary letters in which entries strictly increase from north to south in each column and weakly increase from west to east in each row. The \emph{content} of a semistandard tableau $T$ is the sequence $(c_1, c_2, \ldots, c_m)$, where $c_i$ is the number of occurrences of $i$ in $T$ and $m$ is the largest letter of $T$. A \emph{standard tableau} is a semistandard tableau of content $1^n$. The set of standard Young tableaux is denoted SYT and the subset of SYT of shape $\lambda$ is denoted SYT$(\lambda)$.
The \emph{row reading word} of a semistandard tableau $T$, denoted $\reading(T)$, is the word obtained by concatenating the rows of $T$ from bottom to top.

Let $Z_\lambda$ be the superstandard tableau of shape and content $\lambda$---the tableau whose $i$-th row is filled with $i$'s.
For an SYT  $Q$, $Q^\evac$ denotes the \emph{Sch\"utzenberger involution} or \emph{evacuation} of $Q$ (see, e.g., \cite[A1.2]{F}).

A \emph{semistandard colored tableau}, or \emph{colored tableau} for short, for the order $\lessdot$ is a tableau with entries in $\mathcal{A}$ such that unbarred letters strictly increase from north to south in each column and weakly increase from west to east in each row, and  barred letters weakly increase from north to south in each column and strictly increase from west to east in each row.
The set of colored tableaux for the order $\lessdot$ is denoted $\text{CT}^\lessdot$ (and $\text{CT} := \text{CT}^<$).
The \emph{content} of a colored tableau $T$ is the content of the ordinary tableau obtained by removing the bars on all the entries of $T$. A \emph{standard colored tableau} is a colored tableau of content $1^n$.
The standardization of a colored tableau $T$ for the order $\lessdot$, denoted $T^{\stand^\lessdot}$, is defined as for colored words, except that barred letters are relabeled from bottom to top and unbarred letters from left to right.

\begin{remark}
\label{r standardization}
Many of the algorithms used in this paper, like insertion and conversion, depend on knowing when one letter in a word or tableau is less than or greater than another.  For semistandard objects, when the two letters being compared are equal, the tie is resolved by checking which letter is larger than the other after standardizing.
\end{remark}

\begin{example} \label{ex colored tableau}
The tableau $T={\tiny\tableau{
\crc{1}&1&2&2&\crc{3}\\\crc{1}&\crc{2}\\\crc{2}\\}}$ is a colored tableau for the order $<$ of content $(3,4,1)$, shape $(5,2,1)$, and total color $5$.\vspace{1mm}
The standardization of $T$ is
$T^\stand = {\tiny\tableau{\crc{1}&3&6&7&\crc{8}\\\crc{2}&\crc{4}\\\crc{5}\\}}$.
The cell at position $(2,3)$ is an addable cell of $T$ and $T\, \sqcup\, {\tiny\tableau{3}}_{(2,3)}={\tiny\tableau{
\crc{1}&1&2&2&\crc{3}\\\crc{1}&\crc{2}&3\\\crc{2}\\}}$.
\end{example}


Just as for words, we write $\sub_{\lessdoteq \alpha}(T)$ for the subtableau of $T \in \text{CT}^{\lessdot}$ consisting of the letters $\lessdoteq \, \alpha$.
Let $T^*$ denote the colored tableau obtained from $T$ by applying  ${}^*$ to all the letters and then transposing the result. This is always a colored tableau, but not for the same order as $T$, in general.  We will avoid this issue by only applying ${}^*$ to standard colored tableaux or colored tableaux having only barred letters (also see Remark \ref{r bad rev and star}).
Let $T$ be a colored tableau for the order $\prec$.
Just as for words, define $\sub_{\crcempty}(T)$ to be the subtableau consisting of the barred letters of $T$ and $\sub_\varnothing(T)$ to be the skew subtableau consisting of the unbarred letters of $T$ (see Example \ref{ex neg plain}).

\subsection{Schensted insertion and the plactic monoid}
\label{ss plactic equivalence}

The insertion algorithms in this paper use the notion of \emph{inserting} a letter $\alpha$ into a row or column $R$ of a  $\text{CT}^\lessdot$.
By Remark \ref{r standardization}, it suffices to give this definition in the case that the letters of $R$ are distinct and distinct from $\alpha$.  In this case, inserting $ \alpha$ into  $R$ means that $\alpha$ replaces the least letter $\beta \gtrdot \alpha$ in $R$ or, if no such $\beta$ exists, adds a new cell containing $\alpha$ to the end of $R$. In the former case, we say that $\alpha$ \emph{bumps}  $\beta$.

For a colored word $w$, \emph{the insertion tableau and recording tableau of  $w$}, $P(w)$ and $Q(w)$, are defined using the usual Schensted insertion algorithm
using the order $<$ and breaking ties by Remark \ref{r standardization}.

For ordinary words $u$ and $v$, we write $u \ke v$ to indicate that $u$ and $v$ are Knuth equivalent or \emph{plactic equivalent}. Knuth equivalence classes, under the operation of concatenation, form a free associative monoid called the \emph{plactic monoid}. The Knuth equivalence class containing $u$ may be identified with the semistandard tableau $P(u)$, and any (skew) semistandard tableau $T$ may be identified with the Knuth equivalence class containing $\reading(T)$. Therefore, for ordinary words $u$ and $u'$, we allow such expressions as $u u' \ke P(u) \oplus P(u') \ke \reading(P(u)) \, \reading(P(u'))$ in the plactic monoid.

\subsection{Mixed insertion}\label{ss Haiman's insertion algorithms}
Here we review mixed insertion, as developed by Haiman in \cite{Hmixed}.
Mixed insertion was actually first defined by Berele and Regev in \cite{BereleRegev} and also studied by Remmel in \cite{Remmel2}. Haiman's treatment goes somewhat deeper and relates mixed insertion to an operation called conversion.  This relationship is of fundamental importance for this work and roughly means that mixed insertion is simultaneously compatible with any ordering of colored letters in which $1 < 2 < \cdots$ and $\crc{1} < \crc{2} < \cdots$.

\begin{definition}[Mixed insertion \cite{Hmixed}] Let $w = w_1 \dots w_n $ be a colored word and $T_0$ a colored tableau for the order $\lessdot$. Construct a sequence $T_0,T_1, \ldots, T_n = T$ of $\text{CT}^\lessdot$: for each $i = 1, \ldots, n$ form $T_i$ from $T_{i-1}$ by \emph{mixed inserting} $w_i$ as follows:

If $w_i$ is unbarred, insert $w_i$ (using the order $\lessdot$) into the first row of $T_{i-1}$; if it is barred, into the first column. As each subsequent element $\alpha$ of $T_{i-1}$ is bumped by an insertion, insert $\alpha$ into the row immediately below if it is unbarred, or into the column immediately to its right if it is barred. Continue until an insertion takes place at the end of a row or column, bumping no new element.

We say that $T = T_0 \xleftarrow{\mix} w$ is the \emph{mixed insertion of $w$ into $T_0$}. If $T_0 = \varnothing$, then $T$ is the \emph{mixed insertion tableau of $w$ for the order $\lessdot$} and is denoted $P^\lessdot_\mix(w)$; the \emph{mixed recording tableau of $w$ for the order $\lessdot$}, denoted $Q^\lessdot_\mix(w)$, is the SYT with the letter $i$ in the cell $\sh(T_i)/\sh(T_{i-1})$.

For the mixed insertion of a single letter $\alpha$, the \emph{insertion path} of  $T_0 \xleftarrow{\mix} \alpha$ is the sequence of cells containing the letters bumped during the mixed insertion, followed by the cell added at the end.
\end{definition}
See Example \ref{ex mixed insertion} for an example of mixed insertion.

\begin{definition}[Dual mixed insertion]
Following \cite[\textsection3.4]{SWspin}\ (see also \cite[Remark 8.5]{Hmixed}), define the \emph{dual mixed insertion} of the colored word $w$ into the colored tableau $T_0$, denoted $T_0 \xleftarrow{\dualmix} w$, to be the same as mixed insertion except with barred letters treated as if they are unbarred and vice versa. As for mixed insertion, this may be done with respect to any order $\lessdot$ on $\mathcal{A}$.
\end{definition}

We now assemble some basic facts about mixed and dual mixed insertion for later use.
\begin{proposition}[{\cite[Proposition 3.3]{Hmixed}}]
\label{p mix commutes with subwords}
Let $\alpha$ be a colored letter in $w$. Then
\[P^\lessdot_\mix(\sub_{\lessdoteq \alpha}(w)) = \sub_{\lessdoteq \alpha}(P^\lessdot_\mix(w)).\]
\end{proposition}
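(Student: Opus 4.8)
The plan is to induct on the length of the prefix of $w$ consisting of letters that are $\lessdot$-larger than $\alpha$ — or, more convenient here, to prove the single-letter statement that for any colored tableau $T$ and any colored letter $\beta \gtrdot \alpha$, the tableau $\sub_{\lessdoteq\alpha}(T \xleftarrow{\mix} \beta)$ equals $\sub_{\lessdoteq\alpha}(T)$, and then assemble the general claim by running the mixed insertion of $w$ one letter at a time. Indeed, if $w = w_1\cdots w_n$ and $P^\lessdot_\mix(w)$ is built as $T_0 = \varnothing, T_1, \ldots, T_n$, then for each $i$ we compare $\sub_{\lessdoteq\alpha}(T_i)$ with $\sub_{\lessdoteq\alpha}(T_{i-1}) \xleftarrow{\mix} w_i$ when $w_i \lessdoteq \alpha$ and with $\sub_{\lessdoteq\alpha}(T_{i-1})$ when $w_i \gtrdot\alpha$; chaining these identities gives $\sub_{\lessdoteq\alpha}(P^\lessdot_\mix(w)) = P^\lessdot_\mix(\sub_{\lessdoteq\alpha}(w))$, which is the assertion. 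So everything reduces to two claims about inserting one letter: (i) if $\beta\gtrdot\alpha$, then inserting $\beta$ does not disturb the cells of $T$ holding letters $\lessdoteq\alpha$; and (ii) if $\beta\lessdoteq\alpha$, then the insertion path of $\beta$ into $T$ and into $\sub_{\lessdoteq\alpha}(T)$ visit exactly the same cells and produce the same small-letter subtableau.

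For claim (i): think about the mixed-insertion bumping route of $\beta$. When a letter is mixed-inserted it travels down through rows (if it is currently unbarred) or rightward through columns (if currently barred), each time bumping the least element strictly $\gtrdot$ than it. The key invariant is that the value of the "current" bumped letter is always $\gtrdot \beta \gtrdot \alpha$ — every letter bumped along an insertion path is $\gtrdot$ the inserted letter (this is immediate from the definition of "inserting into a row/column" via Remark \ref{r standardization}, which says $\alpha$ replaces the least $\beta\gtrdot\alpha$). Hence no cell containing a letter $\lessdoteq\alpha$ is ever overwritten, and the new cell added at the end of the path holds a letter $\gtrdot\alpha$ as well, so its addition does not change $\sub_{\lessdoteq\alpha}$. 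Care is needed for the semistandard tie-breaking: when $\beta$ is (say) equal to a letter in a row, whether it bumps that letter is governed by standardization, but in all cases the bumped letter is $\gtrdot\beta$ in the standardized order and in particular never $\lessdot\alpha$, so the argument goes through; this is exactly the place where invoking Remark \ref{r standardization} carefully matters.

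For claim (ii), and more robustly for the whole induction, the cleanest route is to standardize. Replacing $w$ by $w^{\stand^\lessdot}$, mixed insertion commutes with standardization (a basic fact from Haiman, which we may cite), and $\sub_{\lessdoteq\alpha}$ of a word or tableau corresponds, after standardization, to taking a suitable prefix-interval $\{1,\dots,k\}$ of values (with an appropriate convention on barred vs.\ unbarred ties). So it suffices to prove the statement for colored permutations and $\alpha$ the $k$-th smallest letter, i.e.\ that mixed inserting the letters $\lessdoteq\alpha$ and ignoring the rest gives the same tableau as mixed inserting all of $w$ and then deleting the large entries — and for that, claim (i) already shows the large letters, once inserted, sit in cells that form a skew shape on top of $\sub_{\lessdoteq\alpha}(T)$ and never interact with it, so they can be deleted at the end without affecting the small part. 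The main obstacle I anticipate is not any single step but bookkeeping the tie-breaking (Remark \ref{r standardization}) consistently throughout — making sure that "the letters $\lessdoteq\alpha$" behaves well under standardization and that the phrase "bumped letter is $\gtrdot$ the inserted letter" is literally correct for semistandard colored tableaux in both the row (unbarred) and column (barred) cases. Reducing immediately to the standard case neutralizes most of this, leaving only the genuinely combinatorial observation in claim (i), which is short.
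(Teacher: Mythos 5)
The paper does not actually prove this statement; it imports it verbatim as Proposition 3.3 of Haiman's paper, so there is no in-house argument to compare against. Your strategy --- run the insertion one letter at a time and verify two single-letter claims --- is the standard and correct way to prove it, and your claim (i) is sound: when $\beta \gtrdot \alpha$ is inserted, every letter in the bumping chain is $\gtrdot$ the letter that bumped it, so the whole chain stays $\gtrdot \alpha$ and no cell of $\sub_{\lessdoteq\alpha}(T)$ is ever created, overwritten, or vacated.

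The soft spot is claim (ii). As you state it, it is false: when $\beta \lessdoteq \alpha$ is inserted into $T$, the bumping chain does not stop when it leaves the letters $\lessdoteq\alpha$ --- the last small letter $\delta$ in the chain bumps some entry $\gtrdot\alpha$ and the path continues on through the large letters --- so the insertion paths in $T$ and in $\sub_{\lessdoteq\alpha}(T)$ do not ``visit exactly the same cells.'' Nor does the appeal to claim (i) repair this, since claim (i) concerns the insertion of a \emph{large} letter, whereas here a chain initiated by a small letter genuinely enters the large-letter region. The missing (true) observation is this: in every row (read west to east) and every column (read north to south) of a $\text{CT}^{\lessdot}$ the standardized entries increase, so the entries $\lessdoteq\alpha$ form an initial segment of each row and column. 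Consequently the chain proceeds identically in $T$ and in $\sub_{\lessdoteq\alpha}(T)$ for as long as it bumps entries $\lessdoteq\alpha$; and at the first step where the chain in $T$ bumps an entry $\gtrdot\alpha$, that entry occupies precisely the cell immediately following the $\lessdoteq\alpha$ prefix of its row or column, which is exactly the cell appended when the same letter $\delta$ reaches the end of that (shorter) row or column in $\sub_{\lessdoteq\alpha}(T)$. Hence the two small subtableaux agree even though the paths do not. With this substituted for your claim (ii), the induction closes and the proof is complete.
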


\begin{proposition}[{\cite[Remark 8.5]{Hmixed}}]
\label{p dual mixed}
For a colored word $w = w_1 \cdots w_n$
\[
P^\lessdot_\mix(w_2 w_3 \cdots w_n) \xleftarrow{\dualmix} w_1 = P^\lessdot_\mix(w).
\]
\end{proposition}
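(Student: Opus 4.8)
The proposition is the colored-insertion analogue of the classical fact that \emph{prepending a letter to a word corresponds to dual (column-style) insertion of that letter into the insertion tableau}; here the role of column insertion is played by dual mixed insertion, since under the bar-swapping operation $w\mapsto w^*$, $T\mapsto T^*$ (cf.\ Remark~\ref{r bad rev and star}) dual mixed insertion is to mixed insertion what column insertion is to row insertion. The plan is to reduce the statement to a single \emph{commutation lemma}: for every $T\in\text{CT}^\lessdot$ and all colored letters $\alpha,\beta$,
\[
(T \xleftarrow{\dualmix} \alpha) \xleftarrow{\mix} \beta \;=\; (T \xleftarrow{\mix} \beta) \xleftarrow{\dualmix} \alpha.
\]

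For the reduction, observe first that the proposition is equivalent to the identity $(\varnothing \xleftarrow{\mix} w_1) \xleftarrow{\mix} (w_2 \cdots w_n) = P^\lessdot_\mix(w_2 \cdots w_n) \xleftarrow{\dualmix} w_1$, since $P^\lessdot_\mix(w)$ is computed by inserting the letters of $w$ one at a time from the left. I would prove this identity by induction on $n$, the case $n=1$ being immediate. For $n\ge 2$ write $w_2 \cdots w_n = v'\, w_n$; then inserting letter by letter and applying the induction hypothesis to the word $w_1 v'$ of length $n-1$,
\[
(\varnothing \xleftarrow{\mix} w_1) \xleftarrow{\mix} (v'\, w_n) \;=\; \big((\varnothing \xleftarrow{\mix} w_1) \xleftarrow{\mix} v'\big) \xleftarrow{\mix} w_n \;=\; \big(P^\lessdot_\mix(v') \xleftarrow{\dualmix} w_1\big) \xleftarrow{\mix} w_n,
\]
and the commutation lemma, applied with $T = P^\lessdot_\mix(v')$, rewrites the right-hand side as $\big(P^\lessdot_\mix(v') \xleftarrow{\mix} w_n\big) \xleftarrow{\dualmix} w_1 = P^\lessdot_\mix(w_2 \cdots w_n) \xleftarrow{\dualmix} w_1$, completing the induction (the case $v'$ empty uses the lemma with $T=\varnothing$).

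It remains to prove the commutation lemma, and this is where essentially all the work lies. By Remark~\ref{r standardization} it suffices to treat the case in which $T$ is a standard colored tableau and $\alpha,\beta$ are distinct from one another and from every entry of $T$; both composite operations are equivariant under standardization, so nothing is lost. The mixed insertion of a single colored letter traces an alternating ``staircase'' path through the tableau: a horizontal (row-to-row) step each time an unbarred letter is bumped, a vertical (column-to-column) step each time a barred letter is bumped, with the roles of barred and unbarred exchanged for dual mixed insertion. The lemma then follows from a local analysis of how the two staircase paths---one generated by $\xleftarrow{\dualmix}\alpha$, the other by $\xleftarrow{\mix}\beta$---interact: one checks that they overlap only in a single ``corner'' region and that within it the bumping is unchanged when the two insertions are performed in the opposite order. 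This is the colored counterpart of the classical lemma that row and column insertion into a tableau commute (see, e.g., \cite{F}), and carrying out this case analysis---while keeping track of the alternating nature of the two paths---is the main obstacle. Alternatively, the proposition can be extracted from Haiman's reversal, transpose, and conversion symmetries for mixed insertion, which is presumably the route behind \cite[Remark~8.5]{Hmixed}.
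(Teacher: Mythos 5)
The paper does not actually prove this proposition: it is imported wholesale from Haiman's work (the bracketed citation to \cite[Remark 8.5]{Hmixed} \emph{is} the proof), so there is no in-paper argument to compare yours against. Judged on its own terms, your reduction is the right one and is the classical route: the statement is equivalent, by induction on $n$, to the single-letter commutation
$(T \xleftarrow{\dualmix} \alpha) \xleftarrow{\mix} \beta = (T \xleftarrow{\mix} \beta) \xleftarrow{\dualmix} \alpha$,
exactly as prepending a letter in ordinary Schensted reduces to the commutation of row and column insertion. The inductive bookkeeping in your second paragraph is correct, and the reduction to standard colored tableaux via standardization is consistent with the paper's conventions (Remark \ref{r standardization}, Proposition \ref{p standardization commutes}).

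The genuine gap is that the commutation lemma, which carries the entire mathematical content of the proposition, is asserted rather than proved. Saying that the two insertion paths ``overlap only in a single corner region'' and that ``the bumping is unchanged when the two insertions are performed in the opposite order'' is a description of the conclusion, not an argument. Even in the uncolored case this lemma requires a careful case analysis (whether the row-insertion path passes strictly left of, through, or right of the column-insertion path, and how the new cells relate); in the mixed setting it is strictly harder, because a single insertion path is not monotone in one direction --- it alternates between row bumps (for unbarred letters) and column bumps (for barred letters), and the same is true of the dual path with the roles reversed --- so the two paths can cross and interleave in ways that have no analogue for ordinary Schensted. Until that analysis is actually carried out (or the statement is derived rigorously from the reversal/transpose symmetries of Proposition \ref{p rev star mixed}, which you mention only as an aside), the proof is incomplete. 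Given that the paper treats this as a citation, the honest options are either to cite Haiman as the paper does, or to fully prove the commutation lemma; the current proposal does neither.
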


The next proposition follows easily from the definitions.
\begin{proposition}
\label{p standardization commutes}
Standardization commutes with many of the operations in this paper:
\begin{align*}
{P(w)}^{\stand} &= P({w}^{\stand}), \\
Q(w) &= Q({w}^{\stand}), \\
{P^\lessdot_\mix(w)}^{\stand^\lessdot} &= P_\mix({w}^{\stand^\lessdot}), \\
Q^\lessdot_\mix(w) &= Q_\mix({w}^{\stand^\lessdot}), \\
{w}^{\plain\;\stand} &= {w}^{\stand\;\plain},
\end{align*}
for any colored word $w$ and total order $\lessdot$ on  $\mathcal{A}$.
\end{proposition}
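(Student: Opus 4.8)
The plan is to obtain all five identities from one observation: each of the algorithms involved---ordinary Schensted insertion, mixed insertion, and the bookkeeping that produces the recording tableaux---consults only the relative order of the letters it compares, with ties between \emph{equal} letters resolved by standardization (Remark~\ref{r standardization}), together with, in the mixed case, the single bit recording whether each letter is barred. Standardization preserves exactly this data: a letter is barred iff its standardized image is, and the standardized images obey the ambient order with precisely the left-to-right tie-break already incorporated. Consequently, running an insertion algorithm on $w$ and on $w^{\stand^\lessdot}$ performs the identical sequence of bumps along identical insertion paths, the two computations differing only in the labels carried along.

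To make this precise for the first four identities I would induct on the number of letters processed. Let $T_0 \subseteq T_1 \subseteq \cdots \subseteq T_n$ be the colored tableaux produced while (mixed) inserting $w$, and $T'_0 \subseteq \cdots \subseteq T'_n$ those produced while inserting $w^{\stand^\lessdot}$. The claim is that $T'_i = T_i^{\stand^\lessdot}$ and that the cell $\sh(T_i)/\sh(T_{i-1})$ equals $\sh(T'_i)/\sh(T'_{i-1})$ for every $i$. The inductive step reduces to a single-letter insertion: inserting a letter $\alpha$ into a row or column $R$ bumps the least entry of $R$ that is $\gtrdot \alpha$, and one checks that standardization carries this cell to the cell bumped when $\alpha^{\stand^\lessdot}$ is inserted into $R^{\stand^\lessdot}$. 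This is exactly Remark~\ref{r standardization} plus the \emph{stability} of standardization---the standardization of the sub-multiset of letters already present in $T_{i-1}$ agrees, as a relabeling, with the corresponding restriction of $w^{\stand^\lessdot}$, so ``least entry $\gtrdot \alpha$'' maps to ``least entry $\gtrdot \alpha^{\stand^\lessdot}$''---while the rule sending a bumped letter to the row below (if unbarred) or to the column on the right (if barred) is untouched by standardization. Taking $i = n$ gives the $P$-identities and comparing added cells gives the $Q$-identities, since $Q$ (resp.\ $Q^\lessdot_\mix$) is by definition the standard tableau recording the cells $\sh(T_i)/\sh(T_{i-1})$; here one uses that $w^{\stand^\lessdot}$ has content $1^n$, so $P^\lessdot_\mix(w^{\stand^\lessdot})$ is already standard, matching the standardized left-hand side. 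The first two identities are simply the instance of this argument in which the algorithm is ordinary Schensted insertion on $w$ regarded as a word over $(\mathcal{A}, <)$, so they need no separate treatment.

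The fifth identity ${w}^{\plain\,\stand} = {w}^{\stand\,\plain}$ is handled separately by unwinding definitions. Writing $w^\plain = \sub_{\crcempty}(w)^* \, \sub_\varnothing(w)$, both ${w}^{\plain\,\stand}$ and ${w}^{\stand\,\plain}$ list the positions of $w$ in the same order---barred positions first, in their original left-to-right order, then unbarred positions likewise---so it suffices to compare, for any two positions $k < l$ in this listing, whether the corresponding entries are in increasing order on each side. This reduces to the single fact that the natural order $\crc{1} < 1 < \crc{2} < 2 < \cdots$ encodes ``compare bar-removed values, breaking a tie in favor of the barred letter'', which is precisely the tie-breaking effect of moving all barred letters to the left when forming $w^\plain$; the three cases (distinct bar-removed values; equal values with the same bar status; equal values with opposite bar status) are then immediate. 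This is also where the choice of order matters: the identity would fail for, e.g., the small bar order.

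I expect the work to be bookkeeping rather than ideas. The one point requiring care is to state the standardization-stability lemma at the right level of generality---for an arbitrary sub-multiset of letters sitting inside a partially built tableau, not merely for prefixes of $w$---and to verify that the tableau-standardization convention (barred letters relabeled from bottom to top, unbarred letters from left to right) is exactly the one that makes the single-letter insertion step commute with standardization. Once that compatibility is pinned down, each of the five identities drops out of the induction, or, for the last, out of the short case analysis.
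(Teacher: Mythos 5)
Your proof is correct. The paper offers no argument for this proposition beyond the remark that it ``follows easily from the definitions,'' and your write-up --- standardization preserving relative order and bar status so that the insertion algorithms perform identical bump sequences, plus the direct tie-breaking check matching the natural order $\crc{1}<1<\crc{2}<2<\cdots$ against the left-shuffle in ${}^\plain$ --- is exactly the standard elaboration of that remark.
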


\begin{remark}\label{r bad rev and star}
The operators ${}^*$ and ${}^{\rev}$ do not commute with standardization. For example, $(\crc{1}\,\crc{1}\,1)^{*\;\stand} = 2\,3\,\crc{1}$, whereas $(\crc{1}\,\crc{1}\,1)^{\stand\;*} = 1\,2\,\crc{3}$; $(1\,1\,1)^{\rev\;\stand} = 1\,2\,3$, whereas $(1\,1\,1)^{\stand\;\rev} = 3\,2\,1$. We therefore only apply these operators to colored permutations.  Similarly, as commented in \textsection\ref{ss tableaux}, the operator ${}^*$ on colored tableaux will only be applied to standard colored tableaux and colored tableaux having only barred letters.
Left-right insertion also does not commute with the version of standardization used in this paper.
\end{remark}

\begin{remark}
Shimozono and White \cite{SWspin} use the convention that barred letters standardize from right to left in words and from left to right in colored tableaux, whereas we use the convention, in agreement with the introduction of \cite{Hmixed}, that barred letters standardize  from left to right in words and from bottom to top in  colored tableaux.
With either of   these conventions, standardization commutes with mixed insertion.
\end{remark}

The Schensted insertions of $u$, $u^\rev$, $u^\ud$, and $u^{\ud\;\rev}$, for an ordinary permutation $u$, are related by
\begin{align}
P(u^\rev) = P(u)^\transpose \ \ \ &\text{and}\ \ \ Q(u^\rev) = Q(u)^{\evac\;\transpose} \label{e rev}\\
P(u^\ud) = P(u)^{\evac\;\transpose} \ \ \ &\text{and}\ \ \ Q(u^\ud) = Q(u)^{\transpose} \label{e ud} \\
P(u^{\ud\;\rev}) = P(u)^{\evac} \ \ \ &\text{and}\ \ \ Q(u^{\ud\;\rev}) = Q(u)^{\evac} \label{e ud rev}
\end{align}
These well-known facts are nicely explained in  \cite[A1.2]{F}.
Some similar results hold for mixed insertion as well (though be warned that ${}^{\ud}$ is not compatible with mixed insertion in a simple way); these are proved in Propositions 3.4 and 8.3 and Corollary 8.4 of \cite{Hmixed}.
\begin{proposition}\label{p rev star mixed}
The operators ${}^*$ and ${}^{\rev}$ have the following effect on mixed insertion:
\begin{list}{\emph{(\roman{ctr})}}{\usecounter{ctr} \setlength{\itemsep}{2pt} \setlength{\topsep}{3pt}}
\item $P\nmix(w^*) = P\nmix(w)^*$,
\item $Q\nmix(w^*) = {Q\nmix(w)}^{\transpose}$,
\item $P\nmix({w}^{\rev}) = {P\nmix(w)}^{\transpose}$,
\item $Q\nmix({w}^{\rev}) = {Q\nmix(w)}^{\evac\;\transpose}$,
\end{list}
where $w$ is any colored permutation\footnote{This proposition holds more generally for any colored word with content consisting of 1's and 0's.}.
\end{proposition}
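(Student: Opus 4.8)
My plan is to prove the more general ($0$--$1$ content) statement directly: the hypothesis that all entry-values are distinct is exactly what makes $T^*$, and the ``flip every bar'' tableau $\widehat T$, valid colored tableaux for the natural order, because on a set of distinct values the natural order agrees with the order by value and is therefore unchanged by ${}^*$. I would prove (i) and (ii) by an induction that exploits the transpose-equivariance of mixed insertion under ${}^*$, and then deduce (iii) and (iv) from (i), (ii), and Proposition \ref{p dual mixed}. (As the paragraph before the statement indicates, these are, after matching conventions, \cite[Proposition 3.4, Proposition 8.3, Corollary 8.4]{Hmixed}, so one may instead simply cite Haiman; what follows is the self-contained route.)

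For (i) and (ii) I would induct on $|w|$, writing $w = w'\alpha$, so that $P\nmix(w) = P\nmix(w')\xleftarrow{\mix}\alpha$ and $P\nmix(w^*) = P\nmix(w'^*)\xleftarrow{\mix}\alpha^*$. By the inductive hypothesis it then suffices to prove the \textbf{Lemma}: if $T$ is a colored tableau with distinct entry-values and $\alpha$ a colored letter whose value is not an entry of $T$, then $T^*\xleftarrow{\mix}\alpha^* = (T\xleftarrow{\mix}\alpha)^*$ and the cell appended by $T^*\xleftarrow{\mix}\alpha^*$ is the transpose of the cell appended by $T\xleftarrow{\mix}\alpha$. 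To prove the Lemma, run the two insertions in parallel and induct on the number of bumps: ``insert $\gamma$ into a row or column $R$'' depends only on the order $\lessdot$, which on distinct values is the order by value and hence commutes with ${}^*$; and applying ${}^*$ and transposing turns ``an unbarred letter bumped from row $r$ enters row $r+1$'' into ``a barred letter bumped from column $r$ enters column $r+1$'', and likewise for the three remaining cases and for the initial step (``$\alpha$ unbarred enters row $1$'' becomes ``$\alpha^*$ barred enters column $1$''). Granting the Lemma, (i) is immediate, and (ii) follows because the cell appended at step $i$ of the $w^*$-insertion is the transpose of the cell appended at step $i$ of the $w$-insertion.

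For (iii) and (iv) I would first iterate Proposition \ref{p dual mixed} to get $P\nmix(w) = \varnothing\xleftarrow{\dualmix}w^{\rev}$ (dual mixed inserting the letters of $w^{\rev}$ from left to right), and then a second parallel run of the algorithms to see that, for any colored word $v$, the dual mixed insertion of $v$ and the ordinary mixed insertion of $v^*$ proceed identically except that every entry has its bar flipped (with no transpose), so that $\varnothing\xleftarrow{\dualmix}v = \widehat{P\nmix(v^*)}$, with the same recording tableau. Combining these gives $P\nmix(w) = \widehat{P\nmix((w^{\rev})^*)}$; replacing $w$ by $w^*$, simplifying $((w^*)^{\rev})^* = w^{\rev}$, and then using (i) together with $\widehat{\widehat{\,\cdot\,}} = \mathrm{id}$ and $S^* = (\widehat S)^{\transpose}$ yields $P\nmix(w^{\rev}) = \widehat{P\nmix(w^*)} = P\nmix(w)^{\transpose}$, which is (iii). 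For (iv), observe that $P\nmix(w^{\rev})$ is produced both by the mixed insertion of $w^{\rev}$, with recording tableau $Q\nmix(w^{\rev})$, and by $\varnothing\xleftarrow{\dualmix}w$, with recording tableau $Q\nmix(w^*) = Q\nmix(w)^{\transpose}$ (by (ii)); these two processes record the chains of shapes of $P\nmix$ of, respectively, the suffixes and the prefixes of $w^{\rev}$. The remaining step, which I expect to be the main obstacle, is to show that these two shape-chains differ by Sch\"utzenberger evacuation --- equivalently $Q\nmix(w)^{\transpose} = Q\nmix(w^{\rev})^{\evac}$ --- whence $Q\nmix(w^{\rev}) = \big(Q\nmix(w)^{\transpose}\big)^{\evac} = Q\nmix(w)^{\evac\,\transpose}$. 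This is the mixed-insertion counterpart of the classical identity \eqref{e rev} for recording tableaux; I would establish it either by quoting Haiman's Corollary 8.4 or by transcribing the jeu-de-taquin/evacuation argument of \cite[A1.2]{F} into the colored setting. Everything else is bookkeeping.
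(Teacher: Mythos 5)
Your proposal is correct, but it is worth noting that the paper does not actually prove this proposition at all: it states the identities and attributes them to Propositions 3.4 and 8.3 and Corollary 8.4 of \cite{Hmixed}, exactly the fallback you mention in your parenthetical. Your self-contained route therefore does strictly more work than the text, and the work is sound. The key observation driving (i)--(ii) --- that on distinct values the natural order coincides with the order by value, so ${}^*$ together with transposition exchanges the two bumping rules (unbarred/row versus barred/column) step by step --- is precisely why the statement is restricted to colored permutations, and your single-bump Lemma plus induction on length is the right way to package it; this is in substance Haiman's proof of his Proposition 3.4. Your derivation of (iii) from Proposition \ref{p dual mixed} via the identity $\varnothing\xleftarrow{\dualmix}v = \widehat{P\nmix(v^*)}$ is also valid for colored permutations (the bar-flip does not preserve the natural order globally, but it does preserve the order by value, which is all that is used when no value repeats). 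The one place your argument still leans on something external is (iv): the claim that the shape-chains recorded by the prefix and suffix insertions differ by Sch\"utzenberger evacuation is genuinely the content of Haiman's Corollary 8.4, and ``transcribing the jeu-de-taquin argument of \cite[A1.2]{F} into the colored setting'' is not automatic --- the cleanest honest move there is to reduce to the ordinary case via Proposition \ref{p neg and mixed insertion}(ii), which gives $Q\nmix(v)=Q(v^{\bneg})$ and lets you import the classical identity \eqref{e rev} directly. With that substitution, everything closes up.
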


\subsection{Left-right insertion}
\label{ss left right insertion}
The algorithm which is dual to mixed insertion under inverses is left-right insertion.
Schensted insertion of an ordinary letter into a semistandard tableau is also called row insertion or \emph{right insertion}.
The transposed version of Schensted which bumps letters by columns is called column insertion or \emph{left insertion}.
\begin{definition}[Left-right insertion \cite{Hmixed}]
Let $w =w_1 \cdots w_n$ be a colored word. Construct a sequence $T_0, T_1, \ldots, T_n = T$ of semistandard tableaux: put  $T_0 = \varnothing$; for each $i = 1,\ldots,n$ form $T_i$ from  $T_{i-1}$ by left inserting $w_i^*$ if  $w_i$ is barred and right inserting $w_i$ if $w_i$ is unbarred.

We say that $T = T_0 \xleftarrow{\lr} w$ is the \emph{left-right insertion of $w$ into $T_0$}. If $T_0 = \varnothing$, then $T$ is the \emph{left-right insertion tableau of $w$}, denoted  $T=P_\lr(w)$. Let $Q$ be the recording tableau for the sequence $\varnothing \subset \sh(T_1) \subset \cdots \subset \sh(T_n) = \sh(T)$. The \emph{left-right recording tableau of $w$}, denoted by $Q_\lr(w)$, is obtained from $Q$ by barring those letters of $Q$ in cells added by left insertions; that is, $j$ is barred in $Q_\lr(w)$ if and only if $w_j$ is barred in $w$.  The \emph{insertion path} of $T_0 \xleftarrow{\lr} \alpha$ is defined just as for mixed insertion.
\end{definition}

\begin{proposition}
\label{p remove largest letter}
Let  $w = w_1 \cdots w_n$ be a colored permutation with largest letter $w_k$ (for the order  $<$), and set $w' = w_1 \cdots w_{k-1} w_{k+1} \cdots w_n$.  Let $Q'$ be the tableau obtained from $Q\nmix(w')$ by replacing  $n-1$ with  $n$,  $n-2$ with  $n-1$,  $\ldots$,   $k$ with  $k+1$ (this leaves the standardization of this recording tableau unchanged). Then
\[
\text{$P\nmix(w) = P\nmix(w') \sqcup  {\tiny \tableau{w_k}}_{(r,c)}$\ \ and\ \ $Q\nmix(w) = Q' \xleftarrow{\lr} (w^\inv)_n$,}
\]
where  $(r,c)$ is the position of the cell $\sh(Q\nmix(w))/\sh(Q')$.
\end{proposition}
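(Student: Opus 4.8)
The plan is to deduce both equalities from Haiman's duality between mixed insertion and left-right insertion under inverses: for every colored permutation $v$ one has $P\nmix(v) = Q_\lr(v^\inv)$ and $Q\nmix(v) = P_\lr(v^\inv)$ (see \textsection\ref{ss left right insertion}). I will also use the elementary fact that left-right insertion of a colored word with \emph{distinct} values respects order-preserving relabelings of those values: if $v$ is a colored permutation and $\phi$ is an order isomorphism of value sets, then $P_\lr(\phi(v)) = \phi(P_\lr(v))$, whereas $Q_\lr(\phi(v)) = Q_\lr(v)$, since the recording tableau only encodes the step numbers and which steps were left insertions, and $\phi$ affects neither. (The warning in Remark \ref{r bad rev and star} that left-right insertion does not commute with standardization concerns tie-breaking among equal letters, which does not occur here.)

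First I would analyze $w^\inv$. Because $w_k$ is the $<$-largest letter of $w$, its value is $n$, so $(w^\inv)_n$ is the \emph{last} letter inserted when forming $P_\lr(w^\inv)$, and it equals $k$ if $w_k$ is unbarred and $\crc{k}$ if $w_k$ is barred. Set $v := (w^\inv)_1 \cdots (w^\inv)_{n-1}$, so $w^\inv = v\,(w^\inv)_n$. The word $v$ has distinct values $\{1,\dots,n\}\setminus\{k\}$, and comparing the position of each value $i<n$ in $w$ with its position in $w'$ shows that $v = \phi\big((w')^\inv\big)$, where $\phi$ is the order isomorphism of $\{1,\dots,n-1\}$ onto $\{1,\dots,k-1,k+1,\dots,n\}$ --- which is also precisely the relabeling that takes $Q\nmix(w')$ to $Q'$. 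Applying the duality to the colored permutation $w'$ gives $P_\lr\big((w')^\inv\big) = Q\nmix(w')$ and $Q_\lr\big((w')^\inv\big) = P\nmix(w')$, so by the relabeling fact
\[
P_\lr(v) = \phi\big(P_\lr((w')^\inv)\big) = \phi\big(Q\nmix(w')\big) = Q', \qquad Q_\lr(v) = Q_\lr\big((w')^\inv\big) = P\nmix(w').
\]

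Now I would assemble the two identities. For the recording tableau, the duality applied to $w$ together with $w^\inv = v\,(w^\inv)_n$ gives $Q\nmix(w) = P_\lr(w^\inv) = P_\lr(v) \xleftarrow{\lr} (w^\inv)_n = Q' \xleftarrow{\lr} (w^\inv)_n$, which is the second claimed equality. For the insertion tableau, $P\nmix(w) = Q_\lr(w^\inv)$, and $Q_\lr(w^\inv)$ is obtained from $Q_\lr(v) = P\nmix(w')$ by adjoining one new cell, namely the removable corner $(r,c) := \sh(P\nmix(w))/\sh(P\nmix(w'))$; this cell is created at step $n$ and is barred exactly when $(w^\inv)_n$ --- equivalently $w_k$ --- is barred, so it carries the letter $w_k$. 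Hence $P\nmix(w) = P\nmix(w') \sqcup {\tiny \tableau{w_k}}_{(r,c)}$. Since $\sh(Q') = \sh(Q\nmix(w')) = \sh(P\nmix(w'))$ and $\sh(Q\nmix(w)) = \sh(P\nmix(w))$, this corner $(r,c)$ equals $\sh(Q\nmix(w))/\sh(Q')$, so the two occurrences of $(r,c)$ in the statement coincide. As a cross-check, the $P$-statement also follows directly from Proposition \ref{p mix commutes with subwords} with $\alpha$ the second-largest letter of $w$: then $\sub_{\lessdoteq \alpha}(w) = w'$, while $\sub_{\lessdoteq \alpha}(P\nmix(w))$ is $P\nmix(w)$ with the cell containing $w_k$ removed.

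The step I expect to cost the most care is the middle one: identifying the truncation $v$ of $w^\inv$ with the relabeling $\phi\big((w')^\inv\big)$ via a clean comparison of positions of values, and keeping straight that $\phi$ acts on left-right \emph{insertion} tableaux but leaves left-right \emph{recording} tableaux untouched. Invoking Haiman's duality in the correct direction ($P\nmix \leftrightarrow Q_\lr$ and $Q\nmix \leftrightarrow P_\lr$) is the other place where it is easy to err.
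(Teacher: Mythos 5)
Your proposal is correct and follows essentially the same route as the paper's proof: both invoke Haiman's Theorem 4.3 duality ($P\nmix \leftrightarrow Q_\lr$, $Q\nmix \leftrightarrow P_\lr$ under inverses), identify the truncation $(w^\inv)_1\cdots(w^\inv)_{n-1}$ with an order-preserving relabeling of $(w')^\inv$, and conclude by performing the final left-right insertion of $(w^\inv)_n$. Your explicit justification of why the relabeling $\phi$ acts on $P_\lr$ but leaves $Q_\lr$ unchanged merely spells out what the paper states more tersely.
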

Note that $(w^\inv)_n$ is $k$ if $w_k$ is unbarred and $\crc{k}$ if $w_k$ is barred, so the left-right insertion of $(w^\inv)_n$ is simply the row (resp. column) insertion of $k$ if $w_k$ is unbarred (resp.  barred).
\begin{proof}
Set  $(w^\inv)_L = (w^\inv)_1 (w^\inv)_2 \cdots (w^\inv)_{n-1}$. By Theorem 4.3 of \cite{Hmixed},
\begin{align}
Q\nmix(w) &=  P_\lr(w^\inv),  \label{e1 mix lr}\\
Q\nmix(w') &= P_\lr((w')^\inv), \label{e2 mix lr} \\
P\nmix(w)  &= Q_\lr(w^\inv), \label{e3 mix lr} \\
P\nmix(w')  &= Q_\lr((w')^\inv).  \label{e4 mix lr}
\end{align}
Since  $(w^\inv)_L$ is obtained from $(w')^\inv$ the same way $Q'$ is obtained from  $Q\nmix(w')$, \eqref{e2 mix lr} gives $Q' = P_\lr((w^\inv)_L)$.  Combining this with \eqref{e1 mix lr}, we obtain
\[Q\nmix(w)=  P_\lr(w^\inv) = P_\lr((w^\inv)_L) \xleftarrow{\lr} (w^\inv)_n = Q' \xleftarrow{\lr} (w^\inv)_n.\]
Similarly, \eqref{e3 mix lr}, \eqref{e4 mix lr}, and the relation between $(w^\inv)_L$ and $(w')^\inv$ just mentioned give
\[P\nmix(w)=  Q_\lr(w^\inv) = Q_\lr((w^\inv)_L) \sqcup {\tiny\tableau{w_k}}_{(r,c)} = Q_\lr((w')^\inv) \sqcup {\tiny\tableau{w_k}}_{(r,c)} = P\nmix(w') \sqcup {\tiny\tableau{w_k}}_{(r,c)}.\]
\end{proof}

\begin{remark}
Left-right insertion and Proposition \ref{p remove largest letter} are better understood using biwords.  In fact,
left-right insertion and mixed insertion can both be viewed as special cases of doubly mixed insertion of doubly colored biwords, as is explained in  \cite{SWspin}.
However, for this paper we have decided that this cleaner setup is not worth the notational overhead. 
\end{remark}

\subsection{Conversion}
\label{ss conversion}

For any total order $\lessdot$ on $\mathcal{A}$ and permutation $\sigma$ of $\mathcal{A}$, let $\lessdot^\sigma$ denote the total order on $\mathcal{A}$ in which $\sigma^{-1}(\alpha) \lessdot^\sigma \sigma^{-1}(\beta)$ if and only if $\alpha \lessdot \beta$.
For $k \in \ZZ_{\geq 1} \cup \{\infty\}$, let $<^k$ denote the order
\[\crc{1} \ <^k \ \crc{2} \ <^k \cdots <^k \ \crc{k} \ <^k \ 1 \ <^k \ 2 \ <^k \cdots <^k \ k \ <^k \ \crc{k+1} \ <^k \ k+1\ <^k \ \crc{k+2} \ <^k  \ k+2 \ \cdots.\]
Hence $<^1 = <$, $<^\infty = \prec$, and $(<^k)^\sigma = <^{k+1}$, where $\sigma$ is the cycle  $(1 \ 2\, \cdots \, k \ \crc{k+1})$.

\begin{definition}[Conversion \cite{Hmixed}]
\label{d conversion}
We first define conversion for a colored tableau $T$ with no repeated letter.  Let $\alpha$ be any letter in $T$ and $\beta$ be a letter not in $T$. The operation of \emph{converting $\alpha$ to $\beta$ in $T$} is as follows:

First, replace $\alpha$ with $\beta$. What results is not in general a colored tableau since $\beta$ may be too large or too small, relative to neighboring letters. As long as that is the case, repeatedly perform \emph{exchanges}: if $\beta$ is greater than its neighbor below or to the right, exchange $\beta$ with the lesser (or only) one of these neighbors; if instead $\beta$ is less than its neighbor above or to the left, exchange $\beta$ with the greater (or only) one of these.

The resulting tableau is denoted $\convert{\alpha \to \beta}{T}$. 

We have found it convenient to sometimes think of conversion in a slightly different way, a perspective which is also adopted in \cite[Algorithm 2.4]{BSSswitching}.
Instead of changing the letter in a cell, we keep the letters the same and change the order on the alphabet. Then replacing one letter with another can be accomplished by converting the current order $\lessdot$ to $\lessdot^\sigma$ for some cycle $\sigma$.  Hence, for a colored tableau  $T$ for the order  $<^k$ with no repeated letter, we define $\convert{<^k \to <^{k+1}}{T}$ to be the result of repeatedly performing exchanges between $\crc{k+1}$ and letters in  $\{1,\ldots,k\}$ until $T$ is semistandard for the order $<^{k+1}$. Similarly, the inverse of this procedure is denoted $\convert{<^{k+1} \to <^{k}}{U}$, which converts a colored tableau  $U$ for the order $<^{k+1}$ to a colored tableau for the order $<^k$. Finally, we define
\begin{align*}
\convert{<^k \to <^{l}}{T} := \convert{<^{l-1} \to <^l}{\convert{<^{k+1} \to <^{k +2}}{\convert{<^k \to <^{k+1}}{T}}\cdots \, } \text{ if  $k <l$}, \\
\convert{<^k \to <^{l}}{T} := \convert{<^{l+1} \to <^l}{\convert{<^{k-1} \to <^{k -2}}{\convert{<^k \to <^{k-1}}{T}}\cdots \, } \text{ if  $k > l$.}
\end{align*}
\end{definition}

\begin{remark}
Benkart, Sottile, and Stroomer \cite{BSSswitching} explain conversion as a special case of \emph{switching}, an operation which takes two tableaux with a common border and  moves them through each other using a sequence of exchanges.
They show that many different sequences of exchanges can be used to compute a given switch.  Hence, for instance, the particular sequence of exchanges prescribed above to convert from $<$ to $\prec$ is just a convenient choice---many other sequences would work as well.
\end{remark}

For a general semistandard colored tableau $T$ for the order $\lessdot$, conversion is defined from the above definition using Remark \ref{r standardization}. This means that $\convert{<^k \to <^{k+1}}{T}$ is accomplished by performing exchanges between the bottommost $\crc{k+1}$ and $\{1,\ldots,k\}$ until no more exchanges can be performed, then performing exchanges between the second bottommost $\crc{k+1}$ and $\{1,\ldots,k\}$ until no more exchanges can be performed, etc.
To be careful, there is something to check here, which is that the result of this procedure is a semistandard colored tableau for the order $<^{k+1}$. This is true because this conversion, in the language of \cite{BSSswitching}, is obtained by switching the subtableaux $T|_{\{\crc{k+1}\}}$ and $T|_{[k]}$ of $T$ (and leaving the remainder of $T$ fixed). Here, $T|_S$, $S \subseteq \mathcal{A}$, denotes the subtableau of $T$ consisting of the letters in $S$.

\begin{example}
The colored tableau on the left is converted from the small bar order to the natural order by converting each barred letter, from largest to smallest (keeping in mind Remark \ref{r standardization}). As indicated below, the conversions $<^{3} \to <^{2}$ and $<^{2} \to <$ each take two steps, where the occurrences of $\crc{3}$ and $\crc{2}$ are converted from topmost to bottommost.
\[{\tiny\tableau{
\crc{1}&\crc{2}&\crc{3}&1\\\crc{1}&\crc{3}&\crc{4}&2\\\crc{2}&1&1&3\\1&2&4\\3&5\\}
\ \text{{\tiny${{\prec \to <^{3}} \atop {\text{}}}$}}\
\tableau{
\crc{1}&\crc{2}&\crc{3}&1\\\crc{1}&\crc{3}&1&2\\\crc{2}&1&3&\crc{4}\\1&2&4\\3&5\\}
\ \text{{\tiny${{<^{3} \to <^{2}} \atop {\text{top $\crc{3}$}}}$}} \
\tableau{
\crc{1}&\crc{2}&\crc{3}&1\\\crc{1}&1&1&2\\\crc{2}&2&3&\crc{4}\\1&\crc{3}&4\\3&5\\}
\ \text{{\tiny${{<^{3} \to <^{2}} \atop {\text{bottom $\crc{3}$}}}$}} \
\tableau{
\crc{1}&\crc{2}&1&1\\\crc{1}&1&2&\crc{3}\\\crc{2}&2&3&\crc{4}\\1&\crc{3}&4\\3&5\\}
\ \text{{\tiny${{<^{2} \to <}  \atop {\text{top $\crc{2}$}}}$}} \
\tableau{
\crc{1}&\crc{2}&1&1\\\crc{1}&1&2&\crc{3}\\1&2&3&\crc{4}\\\crc{2}&\crc{3}&4\\3&5\\}
\ \text{{\tiny${{<^{2} \to <} \atop {\text{bottom $\crc{2}$}}}$}} \
\tableau{
\crc{1}&1&1&1\\\crc{1}&\crc{2}&2&\crc{3}\\1&2&3&\crc{4}\\\crc{2}&\crc{3}&4\\3&5\\}}\]
\end{example}

Given the discussion above, it is not hard to verify that
\begin{proposition}
 \label{p standardize conversion}
Conversion commutes with standardization in the following sense: if  $T \in \text{CT}^{\,<^k}$ and the bottommost (resp.   topmost) $\crc{k+1}$ in  $T$ is relabeled by $\crc{l}$ (resp.  $\crc{m}$) in $T^{\stand_k}$, then
\[
 \convert{<^k \to < ^{k+1}}{T}^{\stand_{k+1}} = \convert{<^{l-1} \to < ^{m}}{T^{\stand_{k}} }.
\]
(We have abbreviated $\stand^{<^k}$ by $\stand_k$.)
A similar statement holds for any conversion  $<^k \to <^{k'}$.
\end{proposition}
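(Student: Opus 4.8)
The plan is to reduce both sides to a product of single-letter conversions and then to check that standardization intertwines the two, one exchange at a time.

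First, unwind the definitions. Let $p$ be the number of occurrences of $\crc{k+1}$ in $T$. By the definition of conversion for tableaux with repeated entries (via Remark~\ref{r standardization}), the left-hand side $\convert{<^k\to<^{k+1}}{T}$ is computed by converting these $p$ occurrences one at a time, in the order prescribed by $\stand_k$ (the bottommost first, then the next-bottommost, and so on). By the definition of a general conversion as a composition of one-step conversions $<^j\to<^{j+1}$ and their inverses, the right-hand side $\convert{<^{l-1}\to<^m}{T^{\stand_k}}$ is likewise a sequence of single-letter conversions, each moving one of the $p$ barred letters into which $\stand_k$ relabels the occurrences of $\crc{k+1}$ past the unbarred letters beneath it; here one uses that $T^{\stand_k}$ has distinct entries, so that each intermediate tableau is again standard and each one-step conversion really is a single-letter conversion. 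Thus both sides are products of $p$ single-letter conversions, and the goal is to match the $i$-th step on the left with the $i$-th step on the right.

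The main step is that standardization (with respect to the running order at each stage) carries the sequence of exchanges performed in a single-letter conversion of a tableau to the sequence of exchanges performed in the single-letter conversion of the corresponding letter in its standardization. This holds because each exchange is triggered solely by a comparison of one entry with a neighbor, and standardization resolves ties (Remark~\ref{r standardization}) consistently with every such comparison; equivalently, by the switching description recalled just before the proposition, conversion is the switch of $T|_{\{\crc{k+1}\}}$ past $T|_{[k]}$, and switching is compatible with refining ties. Iterating this over the $p$ occurrences while tracking the relabelings at each stage — the occurrences of $\crc{k+1}$ become $\crc{l},\ldots,\crc{m}$, the letters of $[k]$ become the unbarred letters immediately below them, and the transition $<^k\to<^{k+1}$ becomes the transition $<^{l-1}\to<^m$ — yields the asserted identity. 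The statement for a general conversion $<^k\to<^{k'}$ then follows by composing the one-step cases.

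I expect the one real difficulty to be the bookkeeping in this last step: verifying exactly how $\stand_k$ produces the indices $l$ and $m$, checking that the running orders on the two sides really do correspond under standardization at every intermediate stage, and dealing with repeated letters other than $\crc{k+1}$ (which are again handled by passing to standardizations, per Remark~\ref{r standardization}). The compatibility of a single exchange with standardization is essentially immediate from the definitions, so all the content lies in organizing this bookkeeping cleanly — which is presumably why the proposition is stated as routine.
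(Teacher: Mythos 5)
Your argument is correct and matches the paper's intent: the paper gives no explicit proof, stating only that the proposition "is not hard to verify" from the preceding discussion, namely the definition of conversion with repeated letters via the standardization order (bottommost $\crc{k+1}$ first) and the switching interpretation of \cite{BSSswitching} — which is exactly the decomposition into single-letter conversions and the tie-breaking compatibility you spell out. Nothing in your bookkeeping conflicts with the paper's setup, so this is the same proof, just written out.
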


\begin{proposition} \label{p conversion}
Converting between the small bar order and natural order commutes with mixed insertion in the following sense:
\begin{align}
\label{e1 conversion}
P\nmix(w) &= \convert{\prec \to <}{P^\prec_\mix(w)} \\
\label{e2 conversion}
P^\prec_\mix(w) &= \convert{< \to \prec}{P\nmix(w)}  \\
\label{e3 conversion}
Q\nmix(w) &= Q^\prec_\mix(w).
\end{align}
\end{proposition}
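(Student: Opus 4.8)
The plan is to interpolate between the natural order $<$ and the small bar order $\prec$ through the family of orders $<^k$, and to reduce everything to a single ``local'' commutation between one mixed-insertion step and one conversion step. As a preliminary, note that $\convert{\prec \to <}{-}$ and $\convert{< \to \prec}{-}$ are mutually inverse operations on colored tableaux, so \eqref{e1 conversion} is equivalent to \eqref{e2 conversion}; hence it suffices to prove \eqref{e2 conversion} and \eqref{e3 conversion}.

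Fix a colored word $w$ and choose $m'$ large enough that every barred letter of $w$ is $\le \crc{m'}$ and every unbarred letter is $\le m'$. Then $<^j$ agrees with $\prec$ on the letters occurring in $w$ for all $j \ge m'$, so $P^{<^j}_\mix(w) = P^\prec_\mix(w)$ and $Q^{<^j}_\mix(w) = Q^\prec_\mix(w)$; moreover the conversions $<^{j+1} \to <^j$ with $j \ge m'$ are no-ops on $P^\prec_\mix(w)$ (there is no $\crc{j+1}$ to move), so $\convert{\prec \to <}{P^\prec_\mix(w)} = \convert{<^{m'} \to <}{P^\prec_\mix(w)}$. Consequently, by chaining from $k = 1$ up to $k = m'-1$ and recalling $<^1 = <$, the identities \eqref{e2 conversion} and \eqref{e3 conversion} reduce to the single-step identities
\[
P^{<^{k+1}}_\mix(w) = \convert{<^k \to <^{k+1}}{P^{<^k}_\mix(w)} \qquad\text{and}\qquad Q^{<^{k+1}}_\mix(w) = Q^{<^k}_\mix(w) \qquad (k \ge 1).
\]

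I would prove these by induction on the length of $w$. Stripping the last letter, write $w = \hat w\,\alpha$; then $P^\lessdot_\mix(w) = P^\lessdot_\mix(\hat w) \xleftarrow{\mix} \alpha$, and $Q^\lessdot_\mix(w)$ is obtained from $Q^\lessdot_\mix(\hat w)$ by placing $|w|$ in the cell $\sh(P^\lessdot_\mix(w))/\sh(P^\lessdot_\mix(\hat w))$. Since conversion preserves the shape of a tableau, and using the inductive hypothesis $P^{<^{k+1}}_\mix(\hat w) = \convert{<^k \to <^{k+1}}{P^{<^k}_\mix(\hat w)}$ and $Q^{<^{k+1}}_\mix(\hat w) = Q^{<^k}_\mix(\hat w)$, both the $P$-part and the $Q$-part of the single-step identity for $w$ follow from the local claim: for $T \in \text{CT}^{<^k}$ (by Remark \ref{r standardization} the case of distinct letters suffices) and any letter $\alpha$,
\[
\bigl(\convert{<^k \to <^{k+1}}{T}\bigr) \xleftarrow{\mix} \alpha \;=\; \convert{<^k \to <^{k+1}}{\bigl(T \xleftarrow{\mix} \alpha\bigr)},
\]
where the insertion on the left uses the order $<^{k+1}$ and the one on the right uses $<^k$; the two insertions then necessarily add the same cell, which handles $Q$.

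The local claim is the main obstacle. The conversion $<^k \to <^{k+1}$ is the switch that moves $\{\crc{k+1}\}$ past the block $[k] = \{1,\ldots,k\}$, realized by exchanges confined to the cells holding $\crc{k+1}$ and $1,\ldots,k$; one must show this switch commutes with a single mixed-insertion bumping path. When $\alpha \notin \{\crc{k+1}\} \cup [k]$, the two orders rank $\alpha$ against every entry identically and the insertion path meets the switching region only mildly, so the commutation is a direct verification. When $\alpha \in \{\crc{k+1}\} \cup [k]$, the bumping route and the switch genuinely interfere, and one must trace how the insertion path enters, passes through, and leaves the region being switched — this case analysis is precisely the content of Haiman's treatment of conversion versus mixed insertion in \cite{Hmixed}. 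Alternatively, one can avoid the casework by invoking two structural facts from \cite{Hmixed, BSSswitching}: that conversion is an instance of switching, hence a composite of jeu-de-taquin slides, and that mixed insertion commutes with jeu-de-taquin slides; the local claim, and with it \eqref{e1 conversion}--\eqref{e3 conversion}, then follows formally. Everything else in the argument — the reduction to single steps, the shape-preservation bookkeeping for $Q$, and the finiteness of the interpolation — is routine.
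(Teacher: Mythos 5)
Your argument is correct and follows essentially the same route as the paper: both rest on Haiman's single-step identity $P^{<^{k+1}}_\mix(w) = \convert{<^k \to <^{k+1}}{P^{<^{k}}_\mix(w)}$ (the paper cites \cite[Corollary 3.16]{Hmixed} directly, while you re-derive it by induction on $|w|$ but ultimately defer the local one-letter commutation to Haiman as well), chain through the orders $<^k$, and deduce \eqref{e3 conversion} from shape-preservation of conversion applied to prefixes of $w$. The only cosmetic difference is that the paper first reduces to colored permutations via standardization where you invoke Remark \ref{r standardization} in passing.
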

\begin{proof}
By Propositions \ref{p standardization commutes} and \ref{p standardize conversion}, we can assume that $w$ is a colored permutation. Corollary 3.16 of \cite{Hmixed}, adjusted to the notation at the end of Definition \ref{d conversion}, states that $P^{<^{k+1}}_\mix(w) =$ $ \convert{<^k \to <^{k+1}}{P^{<^{k}}_\mix(w)}$.  Repeated application then yields \eqref{e1 conversion}. The proof of \eqref{e2 conversion} is similar. Finally, applying \eqref{e1 conversion} to every initial subword $w_1 \cdots w_k$ of $w = w_1 w_2 \cdots w_n$ yields \eqref{e3 conversion}.
\end{proof}

\begin{example} \label{ex mixed insertion}
Let $w = \crc{3}\,\crc{1}\,2 \,1 \,\crc{2}\,\crc{2}\,\crc{1}\,2\,1$. The sequence of tableaux produced in computing $P\nmix(w)$ is shown on the next line, and below that the sequence for $P^\prec_\mix(w)$.
\[{\tiny \tableau{
\crc{3}\\}\quad\tableau{
\crc{1}&\crc{3}\\}\quad\tableau{
\crc{1}&2&\crc{3}\\}\quad\tableau{
\crc{1}&1&\crc{3}\\2\\}\quad\tableau{
\crc{1}&1&\crc{3}\\\crc{2}\\2\\}\quad\tableau{
\crc{1}&1&\crc{3}\\\crc{2}\\\crc{2}\\2\\}\quad\tableau{
\crc{1}&1&\crc{3}\\\crc{1}&\crc{2}\\\crc{2}\\2\\}\quad\tableau{
\crc{1}&1&2&\crc{3}\\\crc{1}&\crc{2}\\\crc{2}\\2\\}\quad\tableau{
\crc{1}&1&1&\crc{3}\\\crc{1}&\crc{2}&2\\\crc{2}\\2\\}\quad}\]
\[{\tiny \tableau{
\crc{3}\\}\quad\tableau{
\crc{1}&\crc{3}\\}\quad\tableau{
\crc{1}&\crc{3}&2\\}\quad\tableau{
\crc{1}&\crc{3}&1\\2\\}\quad\tableau{
\crc{1}&\crc{3}&1\\\crc{2}\\2\\}\quad\tableau{
\crc{1}&\crc{3}&1\\\crc{2}\\\crc{2}\\2\\}\quad\tableau{
\crc{1}&\crc{2}&\crc{3}\\\crc{1}&1\\\crc{2}\\2\\}\quad\tableau{
\crc{1}&\crc{2}&\crc{3}&2\\\crc{1}&1\\\crc{2}\\2\\}\quad\tableau{
\crc{1}&\crc{2}&\crc{3}&1\\\crc{1}&1&2\\\crc{2}\\2\\}\quad}\]
By Proposition \ref{p conversion}, each tableau $T$ on the top line is related to the tableau $U$ below it by $U = \convert{< \to \prec}{T}$. The mixed recording tableaux for the orders $<$ and $\prec$ encode the sequence of shapes above:
\[Q\nmix(w) = Q^\prec_\mix(w) = {\tiny \tableau{
1&2&3&8\\4&7&9\\5\\6\\}}. \]
\end{example}

\subsection{The operators ${}^\bneg$ and  ${}^\plain$} \label{ss the operators bneg and plain}
Here we study two operators,  ${}^\bneg$ and  ${}^\plain$, which take colored words to ordinary words.
We will see that the insertion tableaux of  $w^\bneg$ and  $w^\plain$ can both be computed from the mixed insertion tableau of $w$.

Let $w$ be a colored word. The ordinary word $w^{\bneg}$ is formed from  $w$ by replacing each barred letter $\crc{x}$ with the unbarred
letter\footnote{We must add $-1 > -2 > \cdots$ to our alphabet of ordinary letters.}
$-x$. Unfortunately, in order to make ${}^\bneg$ commute with standardization, we must adopt the convention that negative numbers standardize from right to left. To avoid this confusion, we will standardize before applying ${}^\bneg$.

The operator ${}^\bneg$ was defined and studied in \cite{SWspin}.
The next result is \cite[Proposition 14]{SWspin} (which is an easy consequence of Haimain's Theorem 3.12 \cite{Hmixed}).
\begin{proposition} \label{p neg and mixed insertion}
Let  $\crc{x}_1 < \crc{x}_2 < \cdots < \crc{x}_k$ be the barred letters of a colored permutation $v$. Then
\begin{list}{\emph{(\roman{ctr})}}{\usecounter{ctr} \setlength{\itemsep}{2pt} \setlength{\topsep}{3pt}}
\item $\convert{\crc{x}_k \to -x_k}{\convert{\crc{x}_2 \to -x_2}{\convert{\crc{x}_1 \to -x_1}{P\nmix(v)}}\cdots\,} = P(v^{\bneg})$,
\item $Q\nmix(v) = Q(v^{\bneg})$.
\end{list}
\end{proposition}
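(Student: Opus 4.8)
The plan is to reduce the statement to an already-available result about mixed insertion and conversion by recognizing $w^{\bneg}$ as obtained from $v$ via a sequence of ``conversions'' of barred letters to very small unbarred letters. First I would observe that, since the barred letters of $v$ are $\crc{x}_1 < \cdots < \crc{x}_k$, replacing each $\crc{x}_i$ by $-x_i$ is exactly the substitution that reorders $\mathcal{A}$: in the target order, $-x_k < -x_{k-1} < \cdots < -x_1 < 1 < 2 < \cdots$, so the barred letters are pushed below all unbarred letters, just as in the small bar order $\prec$, but with their mutual order \emph{reversed}. This means $v^{\bneg}$ is, up to the relabeling $\crc{x}_i \mapsto -x_i$, the word obtained from $v$ by passing to an order in the ``conversion family'' connecting $<$ to $\prec$ — precisely the orders $<^k$ of Definition~\ref{d conversion} — except with the barred block reversed at the end.

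For part (ii) the argument is short: by Proposition~\ref{p conversion}\eqref{e3 conversion} we have $Q\nmix(v) = Q^\prec_\mix(v)$, and mixed insertion with respect to $\prec$, when restricted to a word whose barred letters are treated as a totally ordered sub-alphabet lying below all unbarred letters, is literally ordinary Schensted insertion after the bar-removing relabeling — so $Q^\prec_\mix(v) = Q(v^{\bneg})$ once one checks that the reversal of the barred block does not change the recording tableau. To see that last point, note the barred letters of $v$ occupy fixed positions and are mutually distinct; reversing their values while keeping positions fixed is an ``anti-standardization'' within that block, but since under $\prec$ these letters only ever column-insert against each other and against nothing smaller, the shapes $\sh(T_1) \subset \cdots \subset \sh(T_n)$ are unaffected (this is the content of \cite[Proposition~14]{SWspin}, which the excerpt already cites). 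Alternatively one invokes Proposition~\ref{p rev star mixed}(ii) together with Proposition~\ref{p standardization commutes} to track the transpose/evacuation bookkeeping.

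For part (i), I would set up the conversion $P\nmix(v) = P^{<^1}_\mix(v)$ and apply, $k$ times, Corollary~3.16 of \cite{Hmixed} (as restated in the proof of Proposition~\ref{p conversion}), which says $P^{<^{j+1}}_\mix(v) = \convert{<^j \to <^{j+1}}{P^{<^j}_\mix(v)}$, to reach $P^\prec_\mix(v)$. Then I would identify each individual conversion step $\convert{<^j \to <^{j+1}}{\,\cdot\,}$ on the relevant tableau with the single-letter conversion $\convert{\crc{x}_{i} \to -x_{i}}{\,\cdot\,}$ appearing in the statement, for the appropriate matching of indices — the matching being: the conversions in the Haiman family move the barred letters past the unbarred block one at a time, and because $\crc{x}_1 < \cdots < \crc{x}_k$, they come out in the order needed to produce $-x_1, -x_2, \dots, -x_k$ in the composite $\convert{\crc{x}_k \to -x_k}{\cdots \convert{\crc{x}_1 \to -x_1}{P\nmix(v)}}$. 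Finally I would note that once all barred letters lie below all unbarred letters, the colored tableau $P^\prec_\mix(v)$ with bars removed and the relabeling applied is exactly the ordinary tableau $P(v^{\bneg})$, since the two conventions for increase (barred: weak down / strict right; ordinary: strict down / weak right) agree after the transposing bookkeeping is absorbed — or, more cleanly, one appeals directly to \cite[Proposition~14]{SWspin} and remarks that the present statement is merely its reformulation in the conversion language of Definition~\ref{d conversion}.

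The main obstacle I anticipate is purely bookkeeping: matching up the indices and the order-of-application of the single-letter conversions $\crc{x}_i \to -x_i$ with the canonical family $<^j \to <^{j+1}$, and in particular verifying that the ``bottommost vs.\ topmost occurrence'' convention (Remark~\ref{r standardization}, Proposition~\ref{p standardize conversion}) for repeated letters is respected when $v$ is later allowed to be a general colored word rather than a permutation — though the statement as given restricts to a colored permutation, so this issue is mild. I do not expect any genuinely hard step; the real work has already been done in \cite{Hmixed} and \cite{SWspin}, and the proof is an assembly of Proposition~\ref{p conversion}, Corollary~3.16 of \cite{Hmixed}, and \cite[Proposition~14]{SWspin}.
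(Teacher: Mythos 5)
The paper does not actually prove this proposition; it is recorded as a direct citation of \cite[Proposition 14]{SWspin}, itself noted there to be an easy consequence of Haiman's Theorem 3.12. So your closing fallback---``appeal directly to \cite[Proposition 14]{SWspin}''---coincides with what the paper does, but as a \emph{proof} of the statement it is circular, since that proposition is the statement. The self-contained derivation you sketch instead contains a genuine error at its core: you assert that mixed insertion for the order $\prec$ ``is literally ordinary Schensted insertion after the bar-removing relabeling.'' It is not. In mixed insertion it is the bars, not the chosen order, that decide whether a letter is row- or column-inserted, so barred letters still column-insert under $\prec$: for instance $P^\prec_\mix(\crc{1}\,\crc{2})$ is a column of length $2$, whereas Schensted insertion of the relabelled word gives a single row. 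Consequently $Q^\prec_\mix(v)=Q(v^{\bneg})$ cannot be obtained by relabelling, and your subsidiary claim that reversing the values within the barred block does not change the recording tableau is false as a general principle (compare $Q\nmix(\crc{1}\,\crc{2})$, a column, with $Q\nmix(\crc{2}\,\crc{1})$, a row).

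The same issue defeats your proposed proof of (i). Corollary 3.16 of \cite{Hmixed} and Proposition \ref{p conversion} govern only conversions between orders that restrict to $1<2<\cdots$ on the unbarred letters \emph{and} to $\crc{1}<\crc{2}<\cdots$ on the barred letters, whereas the relabelling $\crc{x}_i\mapsto -x_i$ \emph{reverses} the barred block ($-x_k<\cdots<-x_1$). Hence the composite conversion in (i) is not the conversion $<\,\to\,\prec$, and its output differs from $P^\prec_\mix(v)$ even up to relabelling: for $v=\crc{1}\,\crc{2}$ the left-hand side of (i) is the column with $-2$ above $-1$, while relabelling $P^\prec_\mix(v)$ gives $-1$ above $-2$, which is not even semistandard. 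You correctly identify the reversal as the obstacle, but it is not ``bookkeeping to be absorbed''---it is the entire point of the proposition, reflecting the fact that column-inserting $\crc{x}$ behaves like row-inserting a formally negated letter $-x$. That fact is the content of Haiman's Theorem 3.12 and is not derivable from the conversion machinery quoted in this paper; a correct self-contained argument would have to establish it directly (say by induction on the length of $v$, comparing the insertion path of $\crc{x}$ into $T$ with that of $-x$ into the converted tableau), or one simply cites \cite[Theorem 3.12]{Hmixed} or \cite[Proposition 14]{SWspin} as the paper does.
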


Recall that the ordinary word $w^\plain$ is formed from $w$ by shuffling the barred letters to the left and then removing their bars.
Given a colored tableau $T$ for the order $<^k$, let $T' = \convert{<^k \to \prec}{T}$. We define  $T^\plain$ to be the ordinary straight-shape tableau $P$ such that $P \ke \sub_{\crcempty}(T')^* \oplus \sub_\varnothing(T')$.

For a colored word $w$, let  $w^\barrev$ denote the colored word obtained from $w$ by reversing its subword of barred letters (keeping the unbarred letters  fixed).  For example, $(1 \, \crc{4}\, \crc{3}\, 2\, \crc{6}\, 5)^\barrev = 1\, \crc{6}\, \crc{3}\, 2\, \crc{4}\, 5$.
\begin{proposition}
\label{p plain basics}
For any colored word $w$ and colored permutation $v$,
\begin{list}{\emph{(\roman{ctr})}}{\usecounter{ctr} \setlength{\itemsep}{2pt} \setlength{\topsep}{3pt}}
\item $P\nmix(w)^\plain = P(w^\plain)$,
\item $Q\nmix(v^{\barrev\; \inv}) = P_{\lr}({v}^{\barrev}) = P(v^\plain)$,
\item $Q\nmix(v^{\inv \; \barrev}) = P_{\lr}({v}^{\barud}) = P(v^{\barud \; \barrev\; \plain})$,
\item The tableau $P := P(v^{\barud \; \barrev\; \plain})$ can be computed from $U := P\nmix(v)$ as follows: let $U' = \convert{< \to \prec}{U}$; then  $P$ is the ordinary straight-shape tableau $P$ such that $P \ke \sub_{\crcempty}(U')^{* \; \evac} \oplus \sub_\varnothing(U')$.
\end{list}
\end{proposition}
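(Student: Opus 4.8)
\emph{Overview of the plan.} I would prove the four statements in order. Part (i) is the substantive one; parts (ii)--(iv) then follow formally from it together with Haiman's duality $Q\nmix(z)=P_{\lr}(z^{\inv})$ (equation~\eqref{e1 mix lr}), the identity $P(u^{\ud\;\rev})=P(u)^{\evac}$ (equation~\eqref{e ud rev}), the classical commutation of row and column insertion, and routine bookkeeping with the operators ${}^{\plain}$, ${}^{\barrev}$, ${}^{\barud}$, ${}^{\inv}$ on subwords.

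\emph{Part (i).} By \eqref{e2 conversion} of Proposition~\ref{p conversion}, $\convert{<\to\prec}{P\nmix(w)}=P^\prec_\mix(w)$, so $P\nmix(w)^{\plain}$ is the straight-shape tableau satisfying $P\nmix(w)^{\plain}\ke\sub_\crcempty(P^\prec_\mix(w))^{*}\oplus\sub_\varnothing(P^\prec_\mix(w))$. Since $w^{\plain}=\sub_\crcempty(w)^{*}\,\sub_\varnothing(w)$, and since plactic equivalence is a congruence for concatenation while $A\oplus B$ lies in the plactic class $[\reading(A)]\,[\reading(B)]$, it suffices to prove (a) $\sub_\crcempty(P^\prec_\mix(w))^{*}=P(\sub_\crcempty(w)^{*})$ and (b) $\sub_\varnothing(P^\prec_\mix(w))\ke\sub_\varnothing(w)$. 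For (a): in the small bar order all barred letters precede all unbarred ones, so $\sub_\crcempty(P^\prec_\mix(w))=\sub_{\preceq\crc{m}}(P^\prec_\mix(w))=P^\prec_\mix(\sub_\crcempty(w))$ by Proposition~\ref{p mix commutes with subwords} (with $\crc{m}$ the largest barred letter); mixed insertion of a word of barred letters in the small bar order is column insertion in the ``barred'' tableau conventions, which becomes ordinary row insertion after applying ${}^{*}$, and using Remark~\ref{r standardization} to resolve ties one obtains $P(\sub_\crcempty(w)^{*})$. Fact (b) is the statement that the skew subtableau of unbarred entries of $P^\prec_\mix(w)$ rectifies to $P(\sub_\varnothing(w))$; this is a standard property of mixed insertion in the small bar order (equivalently, of the Berele--Regev correspondence), which I would either cite from \cite{Hmixed,BereleRegev,SWspin} or deduce via a jeu-de-taquin argument using that restriction of a plactic class to an interval of the alphabet is well defined up to plactic equivalence (alternatively, via Proposition~\ref{p neg and mixed insertion} and ${}^{\bneg}$). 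I expect (b) to be the main obstacle: a barred insertion can displace unbarred entries, so the unbarred part of $P^\prec_\mix(w)$ is not literally the row-insertion tableau of $\sub_\varnothing(w)$.

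\emph{Parts (ii) and (iii).} Applying \eqref{e1 mix lr} to $z=v^{\barrev\;\inv}=(v^{\barrev})^{\inv}$ and using that ${}^{\inv}$ is an involution on colored permutations gives $Q\nmix(v^{\barrev\;\inv})=P_{\lr}((v^{\barrev\;\inv})^{\inv})=P_{\lr}(v^{\barrev})$. To compute $P_{\lr}(v^{\barrev})$, write $\sub_\crcempty(v)=\crc{b_1}\cdots\crc{b_k}$. In the left-right insertion of $v^{\barrev}$ the barred letters contribute column insertions of $b_k,b_{k-1},\dots,b_1$ (in that order), interleaved with row insertions of the letters of $\sub_\varnothing(v)$ in their original order. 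By the classical fact that row and column insertions into a tableau commute (see, e.g., \cite{F}), all the column insertions may be moved to the front without changing their relative order or that of the row insertions; column insertion of $b_k\cdots b_1=(\sub_\crcempty(v)^{*})^{\rev}$ produces $P((\sub_\crcempty(v)^{*})^{\rev})^{\transpose}=P(\sub_\crcempty(v)^{*})$ by \eqref{e rev} (column insertion of a distinct-letter word is the transpose of its ordinary insertion tableau), and then row-inserting $\sub_\varnothing(v)$ gives $P(\sub_\crcempty(v)^{*}\,\sub_\varnothing(v))=P(v^{\plain})$, since all letters involved are distinct. This is (ii). For (iii), an index chase on signed permutation matrices shows ${}^{\barud}={}^{\inv}\,{}^{\barrev}\,{}^{\inv}$ (transposition interchanges the set of barred positions with the set of barred values, conjugating ``reverse the barred subword'' into ``reverse the barred values''); in particular $(v^{\inv\;\barrev})^{\inv}=v^{\barud}$, so \eqref{e1 mix lr} gives $Q\nmix(v^{\inv\;\barrev})=P_{\lr}(v^{\barud})$. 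Since ${}^{\barrev}$ is an involution and $v^{\barud\;\barrev}$ is again a colored permutation, part (ii) applied to $v^{\barud\;\barrev}$ yields $P_{\lr}(v^{\barud})=P_{\lr}((v^{\barud\;\barrev})^{\barrev})=P((v^{\barud\;\barrev})^{\plain})=P(v^{\barud\;\barrev\;\plain})$.

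\emph{Part (iv).} Since ${}^{\barud}$ and ${}^{\barrev}$ fix the unbarred subword, $\sub_\varnothing(v^{\barud\;\barrev})=\sub_\varnothing(v)$; and on the barred subword $\crc{b_1}\cdots\crc{b_k}$ of $v$ the composite ${}^{\barud\;\barrev}$ reverses both the underlying value set and the left-to-right order, so after unbarring it equals $(\sub_\crcempty(v)^{*})^{\ud\;\rev}$, with ${}^{\ud}$ taken relative to the alphabet $\{b_1,\dots,b_k\}$. Hence
\[
v^{\barud\;\barrev\;\plain}=(\sub_\crcempty(v)^{*})^{\ud\;\rev}\,\sub_\varnothing(v),
\]
and therefore, by \eqref{e ud rev} and plactic congruence,
\[
P(v^{\barud\;\barrev\;\plain})=P\!\big((\sub_\crcempty(v)^{*})^{\ud\;\rev}\big)\oplus P(\sub_\varnothing(v))=P(\sub_\crcempty(v)^{*})^{\evac}\oplus P(\sub_\varnothing(v)).
\]
Now set $U'=\convert{<\to\prec}{U}=P^\prec_\mix(v)$. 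By fact (a) of part (i), $\sub_\crcempty(U')^{*}=P(\sub_\crcempty(v)^{*})$, hence $\sub_\crcempty(U')^{*\;\evac}=P(\sub_\crcempty(v)^{*})^{\evac}$; and by fact (b), $\sub_\varnothing(U')\ke\sub_\varnothing(v)\ke P(\sub_\varnothing(v))$. Substituting, $P(v^{\barud\;\barrev\;\plain})\ke\sub_\crcempty(U')^{*\;\evac}\oplus\sub_\varnothing(U')$, and since the left-hand side is a straight-shape tableau this determines it uniquely, as claimed.
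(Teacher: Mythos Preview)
Your proof is correct and matches the paper's approach closely. For part (i), the paper handles your fact (b) by introducing the auxiliary order $\prec'$ (namely $1\prec'2\prec'\cdots\prec'\crc{1}\prec'\crc{2}\prec'\cdots$), applying Proposition~\ref{p mix commutes with subwords} in that order to get $\sub_\varnothing(P^{\prec'}_\mix(w)) = P(\sub_\varnothing(w))$, and then observing that the conversion $\prec \to \prec'$, restricted to the unbarred entries, is precisely jeu de taquin---this cleanly pins down the step you flag as the main obstacle. For part (ii), the paper simply cites \cite[Remark~4.4]{Hmixed} for the equality $P_\lr(v^\barrev)=P(v^\plain)$, while your direct argument via the commutation of row and column insertion is a valid and more self-contained alternative. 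One tiny slip: in part (iv) your displayed equality $P(v^{\barud\;\barrev\;\plain})=P((\sub_\crcempty(v)^*)^{\ud\;\rev})\oplus P(\sub_\varnothing(v))$ should be a plactic equivalence $\ke$, since the right-hand side has skew shape.
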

\begin{proof}
By Propositions \ref{p standardize conversion} and \ref{p standardization commutes}, ${T}^{\plain\;\stand} = {T}^{\stand\;\plain}$ for any $CT$  $T$. Together with Proposition \ref{p standardization commutes}, this implies that we can assume $w$ is a colored permutation.
Let $T' = \convert{< \to \prec}{P\nmix(w)}$.  By Proposition \ref{p conversion},  $T' = P^\prec_\mix(w)$.
Then by Proposition \ref{p mix commutes with subwords}  with the order $\prec$, $\sub_\crcempty(T') = P^\prec_\mix(\sub_\crcempty(w))$.
Since  $\sub_\crcempty(w)$ consists of only barred letters, this implies
\be \label{e mix plain1}
\sub_\crcempty(T')^* = P(\sub_\crcempty(w)^*).
\ee
Let $\prec^\prime$ denote the order $1\prec^\prime2\prec^\prime\cdots\crc{1}\prec^\prime\crc{2}\cdots$. Then by Proposition \ref{p mix commutes with subwords} with this order,  \be \label{e mix plain2}
\sub_\varnothing(P^{\prec^\prime}_\mix(w)) =  P^{\prec^\prime}_\mix(\sub_\varnothing(w)) = P(\sub_\varnothing(w)).
\ee
Since the conversion  $\convert{\prec \to \prec^\prime}{T'}$, ignoring barred letters, amounts to performing jeu de taquin slides to compute the straight-shape tableau that is plactic equivalent to $\sub_\varnothing(T')$, there holds $\sub_\varnothing(T') \ke \sub_\varnothing(P^{\prec^\prime}_\mix(w))$. Combining this with \eqref{e mix plain1} and \eqref{e mix plain2} gives
\[
P\nmix(w)^\plain \ke \sub_{\crcempty}(T')^* \oplus \sub_\varnothing(T') \ke P(\sub_\crcempty(w)^*) \oplus P(\sub_\varnothing(w)) \ke P(w^\plain),
\]
which proves (i).

Statement (ii) is an application of \cite[Theorem 4.3]{Hmixed} followed by \cite[Remark 4.4]{Hmixed}.  As $v^{\inv \; \barrev \; \inv} = v^{\barud}$, (iii) is just another way of writing (ii).  The proof of (iv) is the same as that of (i), using the additional fact that  $P(u^{\ud\; \rev}) = P(u)^\evac$ for any ordinary word  $u$.
\end{proof}

\begin{example} \label{ex neg plain}
Continuing Examples \ref{ex colored word} and \ref{ex mixed insertion}, recall $w = \crc{3}\,\crc{1}\,2 \,1 \,\crc{2}\,\crc{2}\,\crc{1}\,2\,1$ and  $v := w^\stand$.
To illustrate Proposition \ref{p neg and mixed insertion} (i), we compute
\[
{\small
\begin{array}{rlllllllllll}
v &=& \mboxtwo{\crc{9}} & \mboxtwo{\crc{1}}&\mboxtwo{7}&\mboxtwo{3}&\mboxtwo{\crc{5}}&\mboxtwo{\crc{6}}&\mboxtwo{\crc{2}}&\mboxtwo{8}& \mboxtwo{4}\\
v^\bneg &=& \mboxtwo{-9}&\mboxtwo{-1}&\mboxtwo{7}&\mboxtwo{3}&\mboxtwo{-5}&\mboxtwo{-6}&\mboxtwo{-2}&\mboxtwo{8}&\mboxtwo{4}
\end{array}
}
\]
\[ {{\tiny\tableau{\crc{1}&3&4&\crc{9}\\\crc{2}&\crc{5}&8\\\crc{6}\\7}} \atop {\small \text{$P\nmix(v)$}}}
\qquad {{\tiny\tableau{\ngcell 5&\ngcell 2&4&\crc{9}\\\ngcell 1&3\\\crc{6}\\7}} \atop {\small \text{$\convert{\crc{5} \to -5}{\convert{\crc{2} \to -2}{\convert{\crc{1} \to -1}{P\nmix(v)}}}$}}}
\qquad {{\tiny\tableau{\ngcell 9&\ngcell 6&\ngcell 2&4\\\ngcell 5&3&8\\\ngcell 1\\7}} \atop {\small \text{$P(v)$}}}
\]

To illustrate Proposition \ref{p plain basics} (i), we have
$w^\plain = 3\,1\,2\,2\,1\,2\,1\,2\,1$
and $P\nmix(w)^\plain = P(w^\plain)$ is computed from $ P^\prec_\mix(w)$ as follows
\[
\sub_\crcempty(P^\prec_\mix(w))^* \oplus \sub_\varnothing(P^\prec_\mix(w)) =  {\tiny \tableau{1&1&2\\2\\3\\}} \oplus  {\tiny \tableau{
&&&1\\&1&2\\\\2\\}} \ke {\tiny \tableau{1&1&1&1&2\\2&2&2\\3\\}} = P(w^\plain).
\]
\end{example}

The next result will be useful for better understanding Hook Kronecker Rule III, which expresses  $\gcoef$ as the cardinality of a set of colored words.
The operators $w \mapsto w^\plain$ and  $w \mapsto w^{\bneg\;\stand}$ both lose information and are related the same way ${}^\rev$ and ${}^\ud$ are related, except with a ``twist'' by  ${}^\barrev$.  The following proposition makes this precise and gives some related results.  Its proof is straightforward from the definitions.
\begin{proposition}\label{p plain and neg}
Let $w$ be a colored permutation. Then
\begin{list}{\emph{(\roman{ctr})}}{\usecounter{ctr} \setlength{\itemsep}{2pt} \setlength{\topsep}{3pt}}
\item $ w^{\plain\;\inv} = w^{\barrev\;\inv\;\bneg\;\stand} $
\item $ w^{\rev\;\barrev\;\nobarrev\;\plain}= w^\plain$
\item $ w^{\ud\;\barud\;\nobarud\;\bneg\;\stand}= w^{\bneg\;\stand}$
\item $ w^{*\;\plain}= w^{\rev\;\plain\;\rev}$
\item $ w^{*\;\bneg\;\stand}= w^{\bneg\;\stand\;\ud}.$
\end{list}
\end{proposition}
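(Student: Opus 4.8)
The plan is to prove all five identities by unwinding the definitions of the operators on a colored permutation; essentially all the work is done by two explicit descriptions that I would record first. Since $w$ is a colored permutation, list its barred letters in order of occurrence as $\crc{b_1},\dots,\crc{b_k}$, sitting in positions $p_1<\dots<p_k$, and its unbarred letters as $c_1,\dots,c_{n-k}$, in positions $q_1<\dots<q_{n-k}$, so that $\{b_i\}\sqcup\{c_l\}=[n]=\{p_i\}\sqcup\{q_l\}$. From $w^\plain=\sub_\crcempty(w)^*\sub_\varnothing(w)$ I read off, as an ordinary word, $w^\plain=b_1\cdots b_k\,c_1\cdots c_{n-k}$; in particular $w^\plain$ depends only on the sequences $(b_i)$ and $(c_l)$. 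Unwinding $^\bneg$ and standardization (each $-b_i$ is negative and each $c_l$ positive, so the $-b_i$ occupy the bottom $k$ ranks) gives the positionwise formula
\[
(w^{\bneg\;\stand})_{p_j} = \big|\{i : b_i \ge b_j\}\big|, \qquad (w^{\bneg\;\stand})_{q_l} = k + \big|\{i : c_i \le c_l\}\big|,
\]
so $w^{\bneg\;\stand}$ depends only on the positions $p_j,q_l$ and on the relative orders of the $b_i$ among themselves and of the $c_l$ among themselves.

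For (ii) and (iv) I would use only the first description. For (ii): $^\rev$ reverses the word, hence reverses both subwords and the list of occupied positions, and then $^\barrev$ (resp.\ $^\nobarrev$) reverses the barred (resp.\ unbarred) subword back; so $w^{\rev\;\barrev\;\nobarrev}$ has exactly the same barred subword and unbarred subword as $w$ (only the interleaving changed), hence the same $^\plain$. For (iv): $w^{*}$ has unbarred letters $b_1,\dots,b_k$ and barred letters $\crc{c_1},\dots,\crc{c_{n-k}}$ in the same positions, so $w^{*\;\plain}=c_1\cdots c_{n-k}\,b_1\cdots b_k$, i.e.\ the length-$k$ and length-$(n-k)$ blocks of $w^\plain$ are interchanged; on the other hand $(w^\rev)^\plain=b_k\cdots b_1\,c_{n-k}\cdots c_1$, and applying $^\rev$ to this word both swaps the two blocks and undoes the two internal reversals, so $w^{\rev\;\plain\;\rev}=c_1\cdots c_{n-k}\,b_1\cdots b_k=w^{*\;\plain}$.

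For (iii) and (v) I would run the parallel argument using the positionwise formula for $^{\bneg\;\stand}$. For (iii): $^\ud$, $^\barud$, $^\nobarud$ only relabel the values of $w$, preserving each letter's position and bar-status; $^\ud$ reverses the relative orders of the barred values and of the unbarred values, and $^\barud$ (resp.\ $^\nobarud$) restores the order among the barred (resp.\ unbarred) values. Hence in $w^{\ud\;\barud\;\nobarud}$ the barred letters still occupy $p_1,\dots,p_k$ in the same relative order as $b_1,\dots,b_k$, and likewise for the unbarred letters, so the positionwise formula gives $w^{\ud\;\barud\;\nobarud\;\bneg\;\stand}=w^{\bneg\;\stand}$. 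For (v): in $w^{\bneg\;\stand}$ the barred positions of $w$ carry the bottom labels $1,\dots,k$ (larger barred value getting the smaller label) and the unbarred positions the top labels $k+1,\dots,n$; forming $w^{*}$ interchanges these two roles, and checking block positions and internal orders shows this is precisely undone by post-composing $w^{\bneg\;\stand}$ with the complementation $^\ud$, i.e.\ $w^{*\;\bneg\;\stand}=w^{\bneg\;\stand\;\ud}$.

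The step I expect to require genuine care is (i), the only identity involving $^\inv$; here I would compute both sides positionwise. Inverting $w^\plain=b_1\cdots b_k\,c_1\cdots c_{n-k}$ gives $(w^{\plain\;\inv})_m=j$ if $m=b_j$ and $(w^{\plain\;\inv})_m=k+l$ if $m=c_l$. On the other side, $^\barrev$ places $\crc{b_j}$ in position $p_{k+1-j}$ and leaves $c_l$ in position $q_l$, so $(w^{\barrev\;\inv})_m=\crc{p_{k+1-j}}$ if $m=b_j$ and $(w^{\barrev\;\inv})_m=q_l$ if $m=c_l$; applying $^\bneg$ and standardizing, the slot $m=c_l$ receives $k+l$ (it holds $q_l$, the $l$-th smallest unbarred position, which sits above all $k$ negatives), while the slot $m=b_j$ holds $-p_{k+1-j}$, whose rank among $-p_1,\dots,-p_k$ is $j$ because $p_{k+1-j}$ is the $(k+1-j)$-th smallest of $p_1,\dots,p_k$. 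Thus the two sides agree. The delicate point --- and the reason (i) carries a $^\barrev$ twist --- is that negating and then standardizing reverses the order of the barred slots, and $^\barrev$ is exactly what cancels this reversal; dropping it makes the two sides disagree, as already $w=\crc{3}\,1\,\crc{2}$ shows (there $w^{\plain\;\inv}=3\,2\,1$ while $w^{\inv\;\bneg\;\stand}=3\,1\,2$).
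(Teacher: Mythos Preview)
Your proof is correct and follows the same approach as the paper, which simply says the result is ``straightforward from the definitions'' and gives no further detail. Your two explicit formulas---the block description $w^\plain=b_1\cdots b_k\,c_1\cdots c_{n-k}$ and the positionwise description of $w^{\bneg\;\stand}$---are exactly the right scaffolding, and the five verifications are clean. The observation that $w^{\bneg\;\stand}$ depends only on the barred/unbarred positions and the two internal relative orders is the key reduction for (iii) and (v), and your positionwise computation for (i), including the explanation of why the $\barrev$ twist is needed, is careful and correct.

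One small remark: the paper never explicitly defines $\barud$ and $\nobarud$, so there is a mild ambiguity (complement each barred value by $n+1-x$, versus complement within the barred value set). Your argument for (iii) only uses that $\ud\,\barud\,\nobarud$ preserves the bar positions and the relative orders inside each color class, which holds under either reading, so the proof is robust to this.
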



\section{Kronecker coefficients for one hook shape} \label{s kronecker coefficients for one hook shape}
Here we introduce the fundamental combinatorial objects of this work, colored Yamanouchi tableaux (CYT) and color raisable Yamanouchi tableaux ($\text{CYT}^-$). We then explain their relationship to Kronecker coefficients.

\subsection{Colored Yamanouchi tableaux}
An ordinary word $y=y_1 \cdots y_n$ is \emph{Yamanouchi} if every terminal subword $y_k y_{k+1} \cdots y_n$ has partition content. This is equivalent to $P(y) = Z_\lambda$, where $\lambda$ is the content of $y$ and $Z_\lambda$ is the superstandard tableau of shape and content $\lambda$.

We say that a colored word $w$ is \emph{Yamanouchi} if any of the following equivalent conditions is satisfied
\begin{list}{(\arabic{ctr})}{\usecounter{ctr} \setlength{\itemsep}{2pt} \setlength{\topsep}{3pt}}
\item $w^\plain$ is Yamanouchi,
\item $P(w^\plain)$ is superstandard,
\item $P_\mix(w)^\plain$ is superstandard.
\end{list}
Conditions (2) and (3) are equivalent by Proposition \ref{p plain basics} (i). We say that a colored tableau $T$ is \emph{Yamanouchi} if $T^\plain$ is superstandard, or equivalently,  if $T$ is the mixed insertion tableau of some Yamanouchi word.
The $w$ of Example \ref{ex neg plain} is not Yamanouchi because $w^\plain$ ends in $2\,2\,1\,2\,1\,2\,1$, which has content $(3,4)$.
An example of a colored Yamanouchi word is $\crc{3}\ \crc{1}\ 2 \ 1\ \crc{2}\ \crc{1}\ 2\ 1$. See Figure \ref{f CYT 321} for examples of colored Yamanouchi tableaux.

Define the following subsets of colored Yamanouchi tableaux (CYT):
\begin{align*}
\text{CYT}_{\lambda} &:= \{T \in \text{CT}: T^\plain = Z_\lambda\}\ \ (\text{the set of colored Yamanouchi tableaux of content $\lambda$}),\\
\text{CYT}_{\lambda, d} &:= \{T \in \text{CT}: T^\plain = Z_\lambda,\ \tc(T) = d\},\\
\text{CYT}_{\lambda, d}(\nu) &:= \{T \in \text{CT}: T^\plain = Z_\lambda,\ \tc(T) = d,\ \sh(T) = \nu\}.
\end{align*}
In the introduction, $\text{CYT}_{\lambda, d}$  was defined to be the set of mixed insertion tableaux of the colored Yamanouchi words of content $\lambda$ and total color $d$.   This is equivalent to  the present definition by Proposition \ref{p plain basics} (i).

\subsection{Counting colored Yamanouchi tableaux}
\label{ss counting colored Yamanouchi tableaux}
Recall that $\mu(d)$ denotes the hook shape $(n-d, 1^d)$ for $d \in \{0, 1, \ldots ,n-1\}$.
For a (skew) shape $\theta$, let $s_\theta = s_\theta(\mathbf{x})$ denote the Schur function corresponding to $\theta$ in the infinite set of variables $\mathbf{x} = x_1, x_2, \ldots$.
Let $c_{\lambda \, \mu}^\nu = \langle s_\lambda s_\mu, s_\nu \rangle$ be the Littlewood-Richardson coefficient. It is also convenient to set $c^{\nu/\mu}_\lambda = c_{\lambda \, \mu}^\nu$ (defined to be 0 if $\mu \not \subseteq \nu$).
Let $*$ denote the internal product of symmetric functions, which may be defined by $s_\lambda * s_\mu = \sum_{\nu} g_{\lambda\,\mu\,\nu} s_\nu$.

The following proposition relates colored Yamanouchi tableaux to Kronecker coefficients and is in some sense well known (see Remark \ref{r hook Schur functions}, below).
\begin{proposition} \label{p arm leg}
The following nonnegative integers are equal
\begin{list}{\emph{(\Alph{ctr})}}{\usecounter{ctr} \setlength{\itemsep}{2pt} \setlength{\topsep}{3pt}}
\item $\gcoef + g_{\lambda \, \mu(d-1) \, \nu}$,
\item $\langle s_\lambda * (s_{(1^d)} s_{(n-d)}), s_\nu \rangle$,
\item $\sum_{\alpha \vdash d,\ \beta \vdash n-d}\, c_{\alpha \, \beta}^\lambda \, c_{\alpha' \, \beta}^\nu$,
\item $|\text{CYT}_{\lambda, d}(\nu)|$,
\end{list}
for any $\lambda, \nu \vdash n$ and $d \in \{0,1,\ldots,n\}$ (interpreting the undefined expressions $g_{\lambda \, \mu(n) \, \nu}$ and  $g_{\lambda \, \mu(-1) \, \nu}$ to be 0).
\end{proposition}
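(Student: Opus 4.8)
The strategy is to prove the chain of equalities (A) $=$ (B) $=$ (C) and (B)/(C) $=$ (D), treating the combinatorial identity (D) as the real content and the symmetric-function identities as bookkeeping. First I would establish (A) $=$ (B). Since $s_{\mu(d)} + s_{\mu(d-1)} = s_{(n-d,1^d)} + s_{(n-d+1,1^{d-1})} = s_{(1^d)} s_{(n-d)}$ by the Pieri rule (the product $s_{(1^d)} s_{(n-d)}$ expands as the sum of Schur functions for the two hooks $(n-d,1^d)$ and $(n-d+1,1^{d-1})$, these being the only partitions of $n$ obtained by adding a horizontal strip of size $n-d$ to a single column of size $d$), we get $s_\lambda * (s_{(1^d)}s_{(n-d)}) = s_\lambda * s_{\mu(d)} + s_\lambda * s_{\mu(d-1)}$, and pairing with $s_\nu$ using $s_\lambda * s_\mu = \sum_\nu g_{\lambda\mu\nu}s_\nu$ yields exactly $\gcoef + g_{\lambda\,\mu(d-1)\,\nu}$; the boundary conventions $g_{\lambda\,\mu(n)\,\nu} = g_{\lambda\,\mu(-1)\,\nu} = 0$ match the cases $d = 0$ and $d = n$, where one of the two hooks degenerates.

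Next, (B) $=$ (C). This is the standard expansion of an internal product paired against a third Schur function via the Littlewood-Richardson rule applied to the hook $s_{(1^d)}s_{(n-d)}$ factor: using $\langle s_\lambda * f, s_\nu \rangle = \langle f, s_\lambda * s_\nu \rangle$ is not quite the cleanest route; instead I would use the coproduct formula $\langle s_\lambda * (gh), s_\nu \rangle = \sum \langle g, s_{\lambda^{(1)}}*s_{\nu^{(1)}}\rangle\langle h, s_{\lambda^{(2)}}*s_{\nu^{(2)}}\rangle$ summed over ways of writing $s_\lambda$, $s_\nu$ via the coproduct, then observe that internal products against $s_{(1^d)}$ and $s_{(n-d)}$ collapse (since $M_{(n-d)}$ is trivial and $M_{(1^d)}$ is sign, tensoring with them is transparent): $\langle s_{(n-d)}, s_\beta * s_{\beta'} \rangle$ forces $\beta = \beta'$ — wait, more carefully, $s_\rho * s_{(m)} = s_\rho$ and $s_\rho * s_{(1^m)} = s_{\rho'}$. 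Applying this, $\langle s_\lambda * (s_{(1^d)} s_{(n-d)}), s_\nu\rangle = \sum_{\alpha\vdash d,\ \beta\vdash n-d} c^\lambda_{\alpha\beta}\, c^\nu_{\alpha'\beta}$, where the transpose on $\alpha$ comes from the $s_{(1^d)}$ factor acting by sign. This is a well-documented computation (essentially Berele–Regev); I would cite it or spell out the two-line version.

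Finally, (C) or (B) $=$ (D), the combinatorial heart. Here I would use the characterization of $\text{CYT}_{\lambda,d}(\nu)$ via the small bar order and Proposition \ref{p conversion}, which shows $\text{CYT}^\prec_{\lambda,d}(\nu)$ and $\text{CYT}_{\lambda,d}(\nu)$ are in bijection (conversion $\convert{<\to\prec}{\cdot}$ is a shape-preserving, content-preserving, total-color-preserving bijection between $\text{CT}$ and $\text{CT}^\prec$, and it preserves the Yamanouchi condition since $T^\plain$ is defined through conversion to $\prec$). So it suffices to count $\text{CYT}^\prec_{\lambda,d}(\nu)$. A colored tableau $T$ for the order $\prec$ decomposes uniquely as its barred subtableau $\sub_\crcempty(T)$, a straight-shape tableau filling some $\alpha\subseteq\nu$ with barred letters (strictly increasing in rows, weakly in columns — i.e., $\sub_\crcempty(T)^*$ is an ordinary semistandard tableau of shape $\alpha'$), together with $\sub_\varnothing(T)$, an ordinary semistandard tableau of skew shape $\nu/\alpha$. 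The content of $T$ is the content of $\sub_\crcempty(T)^*$ (a partition, call it supported on size $d$, with the bar-reversal/evacuation subtleties) joined with the content of $\sub_\varnothing(T)$; imposing $T^\plain = Z_\lambda$ (superstandard) via Proposition \ref{p plain basics}(i) forces $P(\sub_\crcempty(T)^*)\oplus P(\sub_\varnothing(T)) = Z_\lambda$, i.e., the two contents interleave to reconstruct $\lambda$ as a Littlewood-Richardson filling. Counting: the barred part contributes, for each $\alpha\vdash d$ with $\alpha\subseteq\nu$, an LR-coefficient's worth of choices interacting with $\nu$; the unbarred part another. Carefully tracking which shape indexes what — the barred subtableau of shape $\alpha$ (so its $*$-transpose has shape $\alpha'$) sits inside $\nu$, forcing a count of $c^\nu_{\alpha'\,\gamma}$-many completions where $\gamma = \sh(\sub_\varnothing(T))$ up to jeu de taquin, and the Yamanouchi condition then forces $P(\text{barred}^*)$ of shape $\alpha'$ and $P(\text{unbarred})$ of shape some $\beta$ to satisfy $\alpha' \ \text{and}\ \beta$ jointly rectify to $Z_\lambda$, contributing $c^\lambda_{\alpha'\,\beta}$ — and matching indices with (C) after renaming. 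I would make this precise by exhibiting an explicit bijection between $\text{CYT}^\prec_{\lambda,d}(\nu)$ and the set of triples $(\alpha,\ \text{LR filling counted by } c^\lambda_{\alpha\beta},\ \text{LR filling counted by } c^\nu_{\alpha'\beta})$ — reading the barred subtableau's reverse/evacuated reading word and the unbarred subtableau's reading word, and invoking that a straight-shape tableau $U$ with $\reading(U)$ Yamanouchi of content $\lambda$ means $U = Z_\lambda$ splits as a pair of LR skew tableaux.

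The main obstacle will be step (C)/(B) $=$ (D): getting the transpose and evacuation bookkeeping exactly right so that the barred subtableau genuinely contributes $c^\nu_{\alpha'\,\beta}$ (not $c^\nu_{\alpha\,\beta}$) and the joint Yamanouchi/superstandard constraint genuinely contributes $c^\lambda_{\alpha\,\beta}$. The decomposition $T \leftrightarrow (\sub_\crcempty(T), \sub_\varnothing(T))$ is clean, but the appearance of the transpose $\alpha'$ and the sign-twist in the content of the barred part (reflected in Proposition \ref{p plain basics}(i), where $\sub_\crcempty(T')^*$ appears) is exactly where an off-by-a-transpose error would creep in; I expect to resolve it by comparing directly against the $s_{(1^d)}$-factor computation in (B)$=$(C), since both the sign-twist there and the $*$-transpose on colored tableaux encode the same Schur-negativity.
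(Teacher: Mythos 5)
Your proposal is correct and follows essentially the same route as the paper: (A)$=$(B) by the Pieri expansion of $s_{(1^d)}s_{(n-d)}$, (B)$=$(C) by Littlewood's identity for $s_\lambda * (s_\theta s_\kappa)$ specialized at $\theta=(1^d)$, $\kappa=(n-d)$ using $s_\rho * s_{(1^d)} = s_{\rho'}$, and (C)$=$(D) by converting to the order $\prec$ and identifying $\text{CYT}_{\lambda,d}(\nu)$ with Littlewood--Richardson tableaux of content $\lambda$ and shape $\alpha \oplus (\nu/\alpha')$. The only cosmetic difference is at the very end: where you would split such an LR tableau bijectively into a pair counted by $c^\lambda_{\alpha\beta}c^\nu_{\alpha'\beta}$, the paper instead multiplies $\sum_\alpha c_\lambda^{\alpha\oplus(\nu/\alpha')}$ by $s_\lambda$, sums over $\lambda$, and extracts the coefficient — the same underlying identity, proved by a short generating-function manipulation rather than a bijection.
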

\begin{proof}
The quantities (A) and (B) are the same since  $s_{(1^d)} s_{(n-d)} = s_{\mu(d)} + s_{\mu(d-1)}$.

The following general result of Littlewood \cite{Littlewood} relates the internal and ordinary products of the symmetric group:
\[
s_\lambda * (s_\theta s_\kappa) = \displaystyle\sum_{\alpha \vdash d,\ \beta \vdash n-d} c_{\alpha \, \beta}^\lambda (s_\alpha * s_\theta) (s_\beta * s_\kappa)
\]
for any partitions $\theta, \kappa$.
Setting $\theta = (1^d),$ $\kappa = (n-d)$, we obtain
\[
s_\lambda * (s_{(1^d)} s_{(n-d)}) = \displaystyle\sum_{\alpha \vdash d,\ \beta \vdash n-d} c_{\alpha \, \beta}^\lambda \, s_{\alpha'} s_\beta.
\]
Taking the inner product with $s_\nu$ on both sides then shows that (B) and (C) are equal.

Finally, we consider (D). After converting the tableaux $\text{CYT}_{\lambda, d}(\nu)$ to the order $\prec$ and unraveling the definition of $T^\plain$, we see that this set of tableaux is in bijection with the union of the Littlewood-Richardson tableaux of content $\lambda$ and shape $\alpha \oplus (\nu/\alpha')$, over all $\alpha \vdash d$ such that $\alpha' \subseteq \nu$. Hence
\[
|\text{CYT}_{\lambda, d}(\nu)| = \displaystyle\sum_{\alpha \vdash d} c_{\lambda}^{\alpha \oplus (\nu/\alpha')}.
\]
Multiplying this quantity by $s_\lambda$ and summing over $\lambda$ yields
\begin{align*}
\displaystyle\sum_{\alpha \vdash d,\ \lambda \vdash n} c_{\lambda}^{\alpha \oplus (\nu/\alpha')} s_\lambda &=
\displaystyle\sum_{\alpha \vdash d} s_{\alpha \oplus (\nu/\alpha')} \label{e arm leg comb2} \\
&=
\displaystyle\sum_{\alpha \vdash d} s_{\alpha} s_{\nu/\alpha'} =
\displaystyle\sum_{\alpha \vdash d,\ \beta \vdash n-d}  c_{\alpha' \, \beta}^\nu \, s_{\alpha} s_\beta =
\displaystyle\sum_{\substack{\alpha \vdash d,\ \beta \vdash n-d, \\ \lambda \vdash n}}  c_{\alpha' \, \beta}^\nu \, c_{\alpha \, \beta}^\lambda \, s_{\lambda}. \notag
\end{align*}
Taking the coefficient of $s_\lambda$ on the left- and right-hand sides proves that (D) equals (C).
\end{proof}

\begin{remark} \label{r hook Schur functions}
Proposition \ref{p arm leg} is closely related to hook Schur functions and the combinatorial objects used to describe them, $(k,l)$ tableaux.
The \emph{hook Schur function} or super Schur function  $HS_\nu(\mathbf{x}, \mathbf{y})$ of Berele and Regev \cite{BereleRegev}
is the character of a certain irreducible representation of the general linear Lie superalgebra.  It can be given the following two descriptions:
the first description (\cite[Definition 6.3]{BereleRegev}) is
\be \label{e h schur2}
HS_\nu(\mathbf{x}; \mathbf{y}) = \sum_{\beta \subseteq \nu} s_\beta(\mathbf{x}) s_{\nu' / \beta'}(\mathbf{y}) = \displaystyle \sum_{\alpha', \beta \subseteq \nu} c_{\alpha' \, \beta}^\nu \, s_\beta(\mathbf{x}) s_{\alpha}(\mathbf{y}).
\ee
For the second, let $\prec^\prime$ denote the order $1\prec^\prime2\prec^\prime\cdots\crc{1}\prec^\prime\crc{2}\cdots$.  Then $\text{CT}^{\prec'}$
is the same as the set of \emph{$(k,l)$ tableaux} defined in \cite{BereleRegev}, as  $k$ and  $l$ go to infinity.
For  $T \in \text{CT}^{\prec'}$, let  $T(\mathbf{x}; \mathbf{y}) = x_1^{c_1} x_2^{c_2} \cdots \, y_1^{d_1} y_2^{d_2} \cdots$, where $(c_1,c_2,\ldots)$ is the content of $\sub_\varnothing(T)$ and  $(d_1,d_2,\ldots)$ is the content of  $\sub_\crcempty(T)$.  Then
\be \label{e h schur1}
HS_\nu(\mathbf{x}; \mathbf{y}) = \displaystyle \sum_{T \in \text{CT}^{\prec'},\ \sh(T) = \nu} T(\mathbf{x}; \mathbf{y}).
\ee

We now claim that the coefficient of $t^d s_\lambda$ in the specialization $HS_\nu(\mathbf{x};t \, \mathbf{x})$ is equal to the quantities in Proposition \ref{p arm leg}.
A direct computation using \eqref{e h schur2} shows that this coefficient is the same as (C):
\[
HS_\nu(x_1, x_2, \ldots;t x_1, t x_2,\ldots) = \displaystyle \sum_{\alpha', \beta \subseteq \nu} c_{\alpha' \, \beta}^\nu \, s_\beta(\mathbf{x}) s_{\alpha}(t\,\mathbf{x}) =
\sum_{d = 0}^n \ \sum_{\alpha' \vdash d, \ \beta \vdash n-d} c_{\alpha' \, \beta}^\nu \, c_{\alpha \, \beta}^\lambda \, t^d \, s_\lambda(\mathbf{x}).
\]
We can also specialize $ \mathbf{y} = t \, \mathbf{x}$ in \eqref{e h schur1};  with a little thought, using the beginning of the proof above that (D) equals (C) and the combinatorial definition of $s_\lambda(\mathbf{x})$, it can be shown that the coefficient of $t^d$ in this specialization is equal to $\sum_{\lambda \vdash n} |\text{CYT}_{\lambda,d}(\nu)| s_\lambda$. Hence the descriptions (C) and (D) of Proposition \ref{p arm leg} are somewhat analogous to the descriptions \eqref{e h schur2} and \eqref{e h schur1} of hook Schur functions.
\end{remark}

\subsection{Color raisable and lowerable tableaux} \label{ss color raisable and lowerable tableaux}
By Proposition \ref{p arm leg}, the Kronecker coefficient $\gcoef$ can be written as the difference
\[\gcoef = \bigg| \bigcup_{i \in \{0, 2, 4,\ldots\}} \text{CYT}_{\lambda, d-i}(\nu) \bigg| - \bigg| \bigcup_{i=\{1, 3,5 \ldots\}} \text{CYT}_{\lambda, d-i}(\nu) \bigg|.\]
This is typical for positivity problems in algebraic combinatorics:
a nonnegative coefficient is easily written as the difference in cardinality of two natural sets of combinatorial objects.  The difficulty in producing a positive combinatorial formula lies in finding an injection from the smaller of the sets to the larger.  For many sets of combinatorial objects in bijection with $\text{CYT}_{\lambda, d}(\nu)$ ($\{\convert{< \to \prec}{T} : T \in \text{CYT}_{\lambda, d}(\nu) \}$, for instance), describing such an injection seems to be extremely difficult.
The miracle in this setup is that $\text{CYT}_{\lambda, d}(\nu)$ can naturally be partitioned into two subsets with cardinalities $\gcoef$ and $g_{\lambda\,\mu(d-1)\,\nu}$.

A colored tableau for the order  $<$ is \emph{color lowerable} if its southwest entry is barred, and is \emph{color raisable} if its southwest entry is unbarred.
Hence unbarring the southwest entry of any color lowerable tableaux is a bijection between color lowerable tableaux and color raisable tableaux, which we call the \emph{color lowering operator} $C_-$. Similarly, the \emph{color raising operator} $C_+$ is the inverse of $C_-$ which acts by barring the southwest entry of any color raisable tableau.

For example,
\[C_- \left({\tiny \tableau{\crc{1}&1&\crc{2}\\\crc{1}&\crc{2}&2\\ \crc{2}&2&3\\}} \right) = {\tiny \tableau{\crc{1}&1&\crc{2}\\\crc{1}&\crc{2}&2\\ 2&2&3\\}}\ , \ \ \  C_+ \left({\tiny \tableau{\crc{1}&1&1&1\\\crc{1}&\crc{2}&2\\ 2&2 \\}} \right) = {\tiny \tableau{\crc{1}&1&1&1\\\crc{1}&\crc{2}&2\\ \crc{2}&2 \\}}.\]


Let $\text{CYT}^{-}_{\lambda}$, $\text{CYT}^{-}_{\lambda, d}$, $\text{CYT}^{-}_{\lambda, d}(\nu)$ denote the subsets of $\text{CYT}_{\lambda}$, $\text{CYT}_{\lambda, d}$, and $\text{CYT}_{\lambda, d}(\nu)$, respectively, consisting of color raisable tableaux. Similarly, let $\text{CYT}^{+}_{\lambda}$, etc. denote the corresponding sets of color lowerable tableaux.

\begin{figure}
\[
{\tiny
\definecolor{light-gray}{gray}{0.8}
\arrayrulecolor{light-gray}
\Yboxdim8pt
\begin{array}{l|l}
\text{{\normalsize $\mu(d)$}} & \hspace{3.4cm} \text{{\normalsize $\text{CYT}^-_{(3,2,1), d}$}} \\[2mm]\hline\\[0mm]
\yng(6)&
\tableau{1&1&1\\2&2\\3\\}\quad \\[6mm]\hline\\[0mm]
\yng(5,1) &
\tableau{
1&1&1&\crc{3}\\2&2\\}\quad\tableau{
1&1&1&\crc{2}\\2\\3\\}\quad\tableau{
1&1&1\\2&2&\crc{3}\\}\quad\tableau{
1&1&1\\\crc{2}&2\\3\\}\quad\tableau{
\crc{1}&1&1\\2&2\\3\\}\quad\tableau{
1&1&1\\\crc{2}\\2\\3\\}\quad\tableau{
\crc{1}&1\\1&2\\2&3\\}\quad\tableau{
\crc{1}&1\\1&2\\2\\3\\}\quad \\[10mm]\hline\\[0mm]
\yng(4,1,1) & \parbox{12cm}{$
\tableau{
1&1&1&\crc{2}&\crc{3}\\2\\}\quad\tableau{
\crc{1}&1&1&\crc{3}\\2&2\\}\quad\tableau{
1&1&1&\crc{2}\\2&\crc{3}\\}\quad\tableau{
1&1&1&\crc{3}\\\crc{2}\\2\\}\quad\tableau{
\crc{1}&1&1&\crc{2}\\2\\3\\}\quad\tableau{
\crc{1}&1&1\\2&2&\crc{3}\\}\quad\tableau{
\crc{1}&1&\crc{3}\\1&2\\2\\}\quad\\[5mm]
\tableau{
\crc{1}&1&\crc{2}\\1&2\\3\\}\quad\tableau{
\crc{1}&1&1\\\crc{2}&2\\3\\}\quad\tableau{
1&1&1\\\crc{2}&\crc{3}\\2\\}\quad\tableau{
\crc{1}&1&1\\\crc{2}\\2\\3\\}\quad\tableau{
\crc{1}&1&\crc{2}\\1\\2\\3\\}\quad\tableau{
\crc{1}&1\\1&2\\2&\crc{3}\\}\quad\tableau{
\crc{1}&1\\1&\crc{2}\\2\\3\\}\quad\tableau{
\crc{1}&1\\1&2\\\crc{2}\\3\\}\quad\tableau{
\crc{1}&1\\1\\\crc{2}\\2\\3\\}$} \\[20mm]\hline\\[0mm]
\yng(3,1,1,1) & \parbox{12cm}{$
\tableau{
\crc{1}&1&1&\crc{2}&\crc{3}\\2\\}\quad\tableau{
\crc{1}&1&1&\crc{2}\\2&\crc{3}\\}\quad\tableau{
\crc{1}&1&\crc{2}&\crc{3}\\1&2\\}\quad\tableau{
\crc{1}&1&\crc{2}\\1&2&\crc{3}\\}\quad\tableau{
\crc{1}&1&\crc{2}&\crc{3}\\1\\2\\}\quad\tableau{
\crc{1}&1&1&\crc{3}\\\crc{2}\\2\\}\quad\tableau{
\crc{1}&1&\crc{3}\\1&\crc{2}\\2\\}\quad \\[5mm]
\tableau{
\crc{1}&1&\crc{2}\\\crc{1}&2\\3\\}\quad\tableau{
\crc{1}&1&\crc{2}\\1&\crc{3}\\2\\}\quad\tableau{
\crc{1}&1&1\\\crc{2}&\crc{3}\\2\\}\quad\tableau{
\crc{1}&1\\1&\crc{2}\\2&\crc{3}\\}\quad\tableau{
\crc{1}&1&\crc{3}\\1\\\crc{2}\\2\\}\quad\tableau{
\crc{1}&1&\crc{2}\\\crc{1}\\2\\3\\}\quad\tableau{
\crc{1}&1\\\crc{1}&\crc{2}\\2\\3\\}\quad\tableau{
\crc{1}&1\\1&\crc{3}\\\crc{2}\\2\\}\quad\tableau{
\crc{1}&\crc{2}\\\crc{1}\\1\\2\\3\\}\quad$} \\[18mm]\hline\\[0mm]
\yng(2,1,1,1,1) & \tableau{
\crc{1}&1&\crc{2}&\crc{3}\\1&\crc{2}\\}\quad\tableau{
\crc{1}&1&\crc{2}&\crc{3}\\\crc{1}\\2\\}\quad\tableau{
\crc{1}&1&\crc{2}\\1&\crc{2}&\crc{3}\\}\quad\tableau{
\crc{1}&1&\crc{2}\\\crc{1}&\crc{3}\\2\\}\quad\tableau{
\crc{1}&1&\crc{3}\\\crc{1}&\crc{2}\\2\\}\quad\tableau{
\crc{1}&\crc{2}&\crc{3}\\\crc{1}\\1\\2\\}\quad\tableau{
\crc{1}&1\\\crc{1}&\crc{2}\\2&\crc{3}\\}\quad\tableau{
\crc{1}&\crc{2}\\\crc{1}&\crc{3}\\1\\2\\}\quad \\[8mm]\hline\\[-2mm]
\yng(1,1,1,1,1,1) & \tableau{
\crc{1}&\crc{2}&\crc{3}\\\crc{1}&\crc{2}\\3\\}\quad
\end{array}
}
\]
\caption{The set of color raisable Yamanouchi tableaux of content $\lambda =(3,2,1)$; the number of such tableaux of shape  $\nu$ and total color $d$ is the Kronecker coefficient  $g_{\lambda \, (6-d,1^d) \, \nu}$.}
\label{f CYT 321}
\end{figure}


Our main result, from which all else follows, is
\begin{theorem}\label{t plain commutes with color lowering}
For any color lowerable tableau $T$, $T^\plain = C_-(T)^\plain$.
\end{theorem}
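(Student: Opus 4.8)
The plan is to transfer the statement to colored words. By Proposition~\ref{p plain basics}(i) we have $P\nmix(w)^\plain = P(w^\plain)$ for every colored word $w$, so writing $T = P\nmix(w)$ it is enough to produce a colored word $\hat w$ with $P\nmix(\hat w) = C_-(T)$ and $\hat w^\plain \ke w^\plain$; then $C_-(T)^\plain = P(\hat w^\plain) = P(w^\plain) = T^\plain$. Since ${}^\plain$, mixed insertion, conversion, and (one checks) $C_-$ are all compatible with standardization (Propositions~\ref{p standardization commutes} and~\ref{p standardize conversion}), I would first reduce to the case that $T$, hence $w$, is standard, which removes all tie-breaking from the picture.

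Next I would locate where the modification must occur. As $T$ is color lowerable, its southwest entry is a barred letter $\crc a$ occupying the cell $(\ell,1)$, with $\ell$ the number of rows of $\sh(T)$. By Proposition~\ref{p mix commutes with subwords}, $\sub_{\lessdoteq \crc a}(T) = P\nmix(\sub_{\lessdoteq \crc a}(w))$, and in this subtableau $\crc a$ is the largest letter and sits alone in its (bottom) row; letters of $w$ exceeding $\crc a$ never touch column~$1$ of this subtableau. Tracing the cell $(\ell,1)$ backward through the insertion sequence $\varnothing = T_0 \subset \cdots \subset T_n = T$ pins down a distinguished subword $u = w_{k_1} w_{k_2}\cdots w_{k_m}$ of $w$ (informally, the letters whose insertion paths reach the bottom of column~$1$).

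The color-lowering operator on words would then be: form $\hat w$ from $w$ by cyclically rotating $u$ one step to the right inside $w$ (replace the letters in positions $k_1,\dots,k_m$ by $w_{k_m}, w_{k_1},\dots, w_{k_{m-1}}$ and keep everything else), and unbar the letter that this rotation moves into the position formerly played by $\crc a$. I would then verify the two required properties. First, $P\nmix(\hat w) = C_-(T)$: the rotation is designed so that, comparing the two mixed-insertion sequences step by step, the only net change is that the column-$1$ bumping chain ending in $\crc a$ at $(\ell,1)$ is reproduced with its last entry unbarred, and, because $\crc a$ and $a$ are neighbors in the natural order, placing $a$ in that cell creates no further bumps, so the remainder of $T$ is untouched. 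Second, $\hat w^\plain \ke w^\plain$: since $w^\plain = \sub_\crcempty(w)^*\,\sub_\varnothing(w)$ and $P(w^\plain)$ is the rectification of $P(\sub_\crcempty(w)^*)\oplus P(\sub_\varnothing(w))$, one checks that relocating the single letter $a$ from the barred block to the unbarred block, together with the rotation of $u$, alters $w^\plain$ only by Knuth moves.

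The main obstacle is exactly the construction and verification in the previous paragraph: identifying the subword $u$ correctly and proving that ``rotate once to the right, then unbar'' simultaneously realizes $C_-$ under mixed insertion and preserves the plactic class of ${}^\plain$. This is the ``more subtle'' operator on words promised in the introduction, and getting it right demands a careful local analysis of mixed insertion near the first column: which letters' insertion paths reach its bottom cell, and how a cyclic shift of that data re-records the terminal cell. Once this word operator is in hand the theorem is immediate. (A more computational alternative would compare $\convert{<\to\prec}{T}$ with $\convert{<\to\prec}{C_-(T)}$ directly: during conversion $\crc a$ bubbles up column~$1$, and unbarring it should amount to a single jeu-de-taquin slide past the unbarred part of $\convert{<\to\prec}{T}$, which leaves the rectification defining ${}^\plain$ unchanged; but controlling the first column of $\convert{<\to\prec}{T}$ is itself intricate.)
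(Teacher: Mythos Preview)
Your overall strategy---transfer to words, construct a word operator $\hat w$ realizing $C_-$ under mixed insertion while preserving the plactic class of ${}^\plain$---is exactly the paper's approach. But the key step, which you yourself flag as ``the main obstacle,'' is a genuine gap, and it is essentially all of the work.

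The paper's operator $\pi_-$ rotates the \emph{rightmost special subword} of $w$: a decreasing hook subword of maximal length $\tau$ using only letters $\le \sw(w)$, rightmost in the Sperner-lattice order on longest antichains of the poset $\text{Pos}(w)$. Your proposed $u$ (``letters whose insertion paths reach the bottom of column~$1$'') is not this object, and is not even well-defined as stated; in any case it does not have the right length or the right extremality property. The paper's proof that $P_\mix(\pi_-(w)) = C_-(P_\mix(w))$ (Theorem~\ref{t main thm words}) is a substantial induction: peel off letters $>\sw(w)$, reduce to $w_n = \crc{x}$ and $v_1 = x$, then a delicate Step~4 peels off the second-largest letter and is precisely where one sees why the \emph{rightmost} special subword (and no other) must be rotated. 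Your one-sentence justification (``the only net change is that the column-$1$ bumping chain\ldots'') does not survive contact with examples. Likewise, the plactic equivalence $w^\plain \ke \pi_-(w)^\plain$ (Theorem~\ref{t plain commutes with pi-}) is proved by splitting $w$ at the first barred letter of the special subword and applying $\pi_-$ and $\pi_+$ separately to the unbarred and barred halves (Lemma~\ref{l plain commutes with pi-}); it is not a matter of ``only Knuth moves'' in any evident way.

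A small additional confusion: in the paper's $\pi_\mathbf{k}$, the last letter $w_{k_\tau} = \crc{x}$ moves to position $k_1$ and is unbarred there. Your description (``unbar the letter that this rotation moves into the position formerly played by $\crc a$'') points at position $k_m$ instead, which is wrong.
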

This will be proved in \textsection\ref{s color raising and lowering operators on words}.

\begin{corollary}
\label{c color lowering Yamanouchi bijection}
The color lowering operator restricts to a bijection from color lowerable Yamanouchi tableaux of content  $\lambda$ and total color $d+1$ to color raisable Yamanouchi tableaux of content $\lambda$ and total color  $d$, i.e. $C_- : \text{CYT}^{\, +}_{\lambda, d+1} \xrightarrow{\cong} \text{CYT}^{\, -}_{\lambda, d}$.
\end{corollary}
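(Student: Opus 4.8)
The plan is to deduce the corollary directly from Theorem \ref{t plain commutes with color lowering}, treating it as a bookkeeping exercise layered on top of the already-established fact that $C_-$ is a bijection from the set of color lowerable colored tableaux (for the order $<$) onto the set of color raisable ones, with inverse $C_+$.

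First I would verify that $C_-$ maps $\text{CYT}^-_{\lambda,d+1}$'s complement — that is, $\text{CYT}^+_{\lambda,d+1}$ — into $\text{CYT}^-_{\lambda,d}$. Let $T \in \text{CYT}^+_{\lambda,d+1}$. Then $T$ is color lowerable, so $C_-(T)$ is a well-defined color raisable colored tableau, and $\sh(C_-(T)) = \sh(T)$ since unbarring the southwest entry alters neither the set of cells nor the shape. Because $C_-$ removes exactly one bar, $\tc(C_-(T)) = \tc(T) - 1 = d$. Finally, Theorem \ref{t plain commutes with color lowering} gives $C_-(T)^\plain = T^\plain = Z_\lambda$, so $C_-(T)$ is Yamanouchi of content $\lambda$. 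Hence $C_-(T) \in \text{CYT}^-_{\lambda,d}$.

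Next I would run the same argument in the other direction using $C_+ = C_-^{-1}$. Given $U \in \text{CYT}^-_{\lambda,d}$, the tableau $T := C_+(U)$ is color lowerable with $\tc(T) = d+1$ and $\sh(T) = \sh(U)$; applying Theorem \ref{t plain commutes with color lowering} to $T$ (which is color lowerable with $C_-(T) = U$) yields $T^\plain = C_-(T)^\plain = U^\plain = Z_\lambda$, so $T \in \text{CYT}^+_{\lambda,d+1}$. Thus $C_+$ maps $\text{CYT}^-_{\lambda,d}$ into $\text{CYT}^+_{\lambda,d+1}$. Since $C_-$ and $C_+$ are mutually inverse on the ambient sets of color lowerable and color raisable colored tableaux, their restrictions to these subsets are mutually inverse as well, which gives the asserted bijection $C_- \colon \text{CYT}^+_{\lambda,d+1} \xrightarrow{\cong} \text{CYT}^-_{\lambda,d}$.

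I do not expect any genuine obstacle here: the entire substance of the corollary is carried by Theorem \ref{t plain commutes with color lowering}, and everything else is an immediate consequence of the definitions of $C_-$, $C_+$, $\tc$, and $\sh$. The real work lies in establishing Theorem \ref{t plain commutes with color lowering} itself, which is deferred to \textsection\ref{s color raising and lowering operators on words}.
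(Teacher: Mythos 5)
Your proposal is correct and is exactly the argument the paper intends: the corollary is stated without proof because it follows immediately from Theorem \ref{t plain commutes with color lowering} together with the facts that $C_-$ and $C_+$ are mutually inverse, preserve shape, and shift total color by one. Spelling out both directions as you do is the right (and only) bookkeeping needed.
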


\begin{theorem}[Hook Kronecker Rule I]\label{t main thm tableaux}
The Kronecker coefficient  $\gcoef$ (where $\mu(d) = (n-d,1^d)$) is equal to the number of color raisable Yamanouchi tableaux of content $\lambda$, total color $d$, and shape $\nu$. This is, by definition, the number of  colored tableaux  $T$ of shape  $\nu$, having $d$ barred entries and unbarred southwest corner, and such that $T^\plain$ is the superstandard tableau of shape and content $\lambda$.
\end{theorem}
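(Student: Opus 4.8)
The plan is to deduce the theorem from Proposition~\ref{p arm leg}, Theorem~\ref{t plain commutes with color lowering}, and Corollary~\ref{c color lowering Yamanouchi bijection} by a short induction on $d$. First I would observe that, since every nonempty colored tableau has a southwest corner which is either barred or unbarred, there is a disjoint decomposition
\[
\text{CYT}_{\lambda, d}(\nu) = \text{CYT}^-_{\lambda, d}(\nu) \sqcup \text{CYT}^+_{\lambda, d}(\nu)
\]
into the color raisable and color lowerable tableaux of shape $\nu$. Next, since the color lowering operator $C_-$ only erases the bar on a corner cell, it preserves the shape, content, and (up to a shift of $1$ in total color) all the relevant statistics of a tableau; hence Corollary~\ref{c color lowering Yamanouchi bijection} restricts to a bijection $C_-\colon \text{CYT}^+_{\lambda, d}(\nu) \xrightarrow{\cong} \text{CYT}^-_{\lambda, d-1}(\nu)$, so that $|\text{CYT}^+_{\lambda, d}(\nu)| = |\text{CYT}^-_{\lambda, d-1}(\nu)|$.

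Combining the two facts above gives the recursion
\[
|\text{CYT}_{\lambda, d}(\nu)| = |\text{CYT}^-_{\lambda, d}(\nu)| + |\text{CYT}^-_{\lambda, d-1}(\nu)|,
\]
and Proposition~\ref{p arm leg} rewrites the left-hand side as $\gcoef + g_{\lambda\,\mu(d-1)\,\nu}$. I would then induct on $d$, the goal being $|\text{CYT}^-_{\lambda, d}(\nu)| = \gcoef$ for all $d \geq 0$. For the base case $d = 0$ there are no barred letters, so a tableau in $\text{CYT}_{\lambda,0}(\nu)$ is an ordinary straight-shape tableau equal to its own ${}^\plain$, forcing it to be $Z_\lambda$ and $\nu = \lambda$; such a tableau is (vacuously) color raisable, so $|\text{CYT}^-_{\lambda,0}(\nu)| = \delta_{\lambda\nu} = g_{\lambda\,\mu(0)\,\nu} = \gcoef$, which is consistent with the conventions $g_{\lambda\,\mu(-1)\,\nu} = 0$ and $\text{CYT}^-_{\lambda,-1}(\nu) = \varnothing$. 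For the inductive step, assuming $|\text{CYT}^-_{\lambda, d-1}(\nu)| = g_{\lambda\,\mu(d-1)\,\nu}$, the recursion together with Proposition~\ref{p arm leg} lets us cancel the common term $g_{\lambda\,\mu(d-1)\,\nu}$ from both sides, leaving $|\text{CYT}^-_{\lambda, d}(\nu)| = \gcoef$; this completes the induction and hence the proof.

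The only substantive ingredient is Theorem~\ref{t plain commutes with color lowering} (and its Corollary~\ref{c color lowering Yamanouchi bijection}): that unbarring the southwest entry of a color lowerable tableau leaves ${}^\plain$ unchanged, and in particular carries Yamanouchi tableaux to Yamanouchi tableaux. Granting that, the present theorem is purely formal bookkeeping, and I anticipate no real obstacle here beyond checking that $C_-$ preserves shape and content and decreases the total color by exactly one, all of which are immediate from the definition. The genuine difficulty is thus entirely deferred to Section~\ref{s color raising and lowering operators on words}, where the word-level counterpart of $C_-$ and its ``rotate a subword once to the right'' behavior must be analyzed in order to establish Theorem~\ref{t plain commutes with color lowering}.
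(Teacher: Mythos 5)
Your proof is correct and takes essentially the same approach as the paper: both rest on the decomposition $\text{CYT}_{\lambda,d}(\nu)=\text{CYT}^-_{\lambda,d}(\nu)\sqcup\text{CYT}^+_{\lambda,d}(\nu)$, the shape-preserving bijection from Corollary~\ref{c color lowering Yamanouchi bijection}, and Proposition~\ref{p arm leg}. The only cosmetic difference is that the paper packages the resulting telescoping recursion as a generating-function identity and divides by $1+t$, whereas you unwind it by induction on $d$ with an explicit base case at $d=0$; these are equivalent.
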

\begin{proof}
We compute
\[
\begin{array}{rlll}
& &\hspace{-7.7mm} (1+t) \sum_{d=0}^{n-1} \gcoef \, t^d  \\[2mm]
&=& \sum_{d=0}^{n}\big( \gcoef + g_{\lambda \,\mu(d-1)\, \nu} \big) \, t^d  \\[2mm]
&=& \sum_{d=0}^n |\text{CYT}_{\lambda, d}(\nu)| \, t^d & \text{by Proposition \ref{p arm leg}}, \\[2mm]
&=& \sum_{d=0}^{n} \big( |\text{CYT}^{-}_{\lambda, d}(\nu)| + |\text{CYT}^{+}_{\lambda, d}(\nu)|\big) \, t^d & \\[2mm]
&=& \sum_{d=0}^{n} \big( |\text{CYT}^{-}_{\lambda, d}(\nu)| + |\text{CYT}^{-}_{\lambda, d-1}(\nu)|\big) \, t^d  & \text{by Corollary \ref{c color lowering Yamanouchi bijection}}, \\[2mm]
&=& (1+t) \sum_{d=0}^{n-1} |\text{CYT}^-_{\lambda,d}(\nu)| \, t^d.
\end{array}
\]
Dividing by $1+t$ and taking the coefficient of $t^d$ yields $\gcoef = |\text{CYT}^{-}_{\lambda, d}(\nu)|$, as desired.
\end{proof}

\begin{remark}\label{r natural and small bar order}
The ability to convert between the orders $<$ and $\prec$ seems to be a powerful combinatorial tool since properties easily seen in one order may be difficult to see in the other and vice versa. Here are two specific examples of this phenomenon.

The two main conditions that need to be checked to test whether $T \in \text{CYT}^-_{\lambda, d}(\nu)$ are whether $T^\plain = Z_\lambda$ and whether the southwest corner of $T$ is unbarred. Interestingly, these are difficult to check ``at the same time:'' the former is easy to check for  $\convert{< \to \prec}{T}$, but not for $T$, and the latter is immediate to check for $T$, but difficult to check for $\convert{< \to \prec}{T}$.

The Kronecker coefficient $\gcoef$  is also equal to $|\text{CYT}^+_{\lambda, d+1}(\nu)|$.  While $\text{CYT}^+_{\lambda, d+1}(\nu)$ and $\text{CYT}^-_{\lambda, d}(\nu)$ are clearly in bijection, there does not seem to be an easy bijection between $\{\convert{< \to \prec}{T} : T \in \text{CYT}^+_{\lambda, d+1}(\nu) \}$ and $\{\convert{< \to \prec}{T} : T \in \text{CYT}^-_{\lambda, d}(\nu) \}$.
\end{remark}


\section{Color raising and lowering operators on words} \label{s color raising and lowering operators on words}
Here we determine the operator $\pi_-$ such that $P\nmix(\pi_-(w)) = C_-(P\nmix(w))$ and $Q\nmix(\pi_-(w)) = Q\nmix(w)$. While the color lowering operator $C_-$ is simple, $\pi_-$ is more subtle and involves rotating a certain subword of  $w$, which we call the rightmost special subword of  $w$, once to the right.  In  \textsection\ref{ss completing the proof}, this will be used to prove Theorem \ref{t plain commutes with color lowering}, thereby completing the proof of Hook Kronecker Rule I.  Throughout this section all words, tableaux, mixed insertions, etc. are with respect to the natural order $<$.

\subsection{Decreasing hook subwords}
\begin{definition}
\label{d hook subwords}
A \emph{decreasing hook word} is a colored word  $v$ such that ${v}^{\stand \; \bneg}$ is decreasing, i.e. $v = x_1 x_2 \cdots x_k \crc{x}_{k+1} \cdots \crc{x}_n$  and $x_1 > x_2 > \cdots > x_k$ and $\crc{x}_{k+1} \leq \ldots \leq \crc{x}_n$.  A \emph{decreasing hook subword} of a colored word  $w$ is a subword of  $w$ that is a decreasing hook word.
For a colored word  $w$, let $\tau(w)$ be the maximum possible length of a decreasing hook subword of $w$.

Given a colored word $w$, set $t := \tau(w)$ and let $\eta$ be smallest letter of $w$ (for  $<$) such that $\sub_{\leq \eta}(w)$ has a decreasing hook subword of length  $t$ (see Proposition \ref{p special subwords basics}, below, for a way to compute these values).
We say that a  decreasing hook subword of  $w$ is a \emph{special subword} if it has length  $t$ and uses letters $\leq \eta$.  See Example \ref{ex pi-}.
\end{definition}

For a finite poset $\mathcal{P}$, the \emph{set of Sperner 1-families}, denoted $\mathscr{S}_1(\mathcal{P})$, is the set of antichains of $\mathcal{P}$ of maximum size. The set $\mathscr{S}_1(\mathcal{P})$ is partially ordered as follows: if $A, B \in \mathscr{S}_1(\mathcal{P})$, then $A \leq B$ if, for each $a \in A$, there exists some $b \in B$ such that $a \leq b$.
Dilworth proved (see, e.g., \cite{GK}) that $\mathscr{S}_1(\mathcal{P})$ is a distributive lattice.  In particular, $\mathscr{S}_1(\mathcal{P})$ has a unique minimum and maximum.

\begin{definition}
For an ordinary word $y$ of length $n$, let $\text{Pos}(y)$ be the poset on $[n]$ in which $i$ is less than $j$ if and only if $i < j$ and $y_i \leq y_j$.
Thus a decreasing subword of  $y$ of length  $\tau(y)$ is the same as an element of $\mathscr{S}_1(\text{Pos}(y))$.
Given  $y_\mathbf{j}, y_\mathbf{k} \in \mathscr{S}_1(\text{Pos}(y))$, we say that $y_\mathbf{j}$ is \emph{further left} (resp. \emph{further right}) than $y_\mathbf{k}$ if  $y_\mathbf{j}$ is less than (resp.   greater than) $y_\mathbf{k}$ in the partial order on Sperner 1-families defined above. We refer to the minimum (resp.  maximum) element of  $\mathscr{S}_1(\text{Pos}(y))$ as the \emph{leftmost} (resp. \emph{rightmost}) longest decreasing subword of $y$.

For a colored word $w$, define $\text{Pos}(w)$ to be the poset $\text{Pos}(y)$ just defined, with $y = w^{\stand \; \bneg}$.
Thus a decreasing hook subword of  $w$ of length  $\tau(w)$ is the same as an element of $\mathscr{S}_1(\text{Pos}(w))$, and a special subword of  $w$ is the same as an element of $\mathscr{S}_1(\text{Pos}(\sub_{\leq \eta}(w)))$, where $\eta$ is as defined above.
\end{definition}

It turns out that the leftmost and rightmost longest decreasing subwords have a more direct description than their definition above.
If $\mathbf{k}$ and  $\mathbf{j}$ are place words of length  $t$, then  $\mathbf{k}$ is \emph{componentwise less than or equal to}  $\mathbf{j}$  if  $k_i \leq j_i$ for all  $i \in [t]$.
\begin{proposition}
\label{p lexicographic eq componentwise}
The leftmost longest decreasing subword of an ordinary word is the unique minimum for the componentwise order.
Precisely, let  $y$ be an ordinary word and let  $\mathbf{k} = k_1 k_2 \cdots k_t$ be the place word of the leftmost longest decreasing subword of $y$.  Let $\mathbf{j}= j_1 j_2\cdots j_t$ be a place word of $y$ such that $y_\mathbf{j}$ is decreasing.
Then $\mathbf{k}$ is componentwise less than or equal to $\mathbf{j}$.

Similarly, the rightmost longest decreasing subword of an ordinary word is the unique maximum for the componentwise order.
\end{proposition}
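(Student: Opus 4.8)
The plan is to prove both halves at once: exhibit the componentwise minimum (resp.\ maximum) of the place words of longest decreasing subwords directly, and then identify it with the Sperner-order minimum (resp.\ maximum) that \emph{defines} the leftmost (resp.\ rightmost) longest decreasing subword in the paper.

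First I would introduce, for a position $p$ of an ordinary word $y$, the \emph{level} $\ell(p)$: the length of the longest decreasing subword of $y_1\cdots y_p$ ending at $p$. Two elementary observations: (1) if $y_{j_1}>\cdots>y_{j_t}$ is any longest decreasing subword (so $t=\tau(y)$), then $\ell(j_a)=a$ for all $a$ --- the bound $\ell(j_a)\geq a$ is immediate, and $\ell(j_a)\geq a+1$ would let one prepend a longer decreasing subword to $y_{j_a}>\cdots>y_{j_t}$, exceeding $\tau(y)$; (2) if $p<q$ with $\ell(p)=\ell(q)=a$, then $y_p\leq y_q$, since otherwise appending the letter at $q$ to a length-$a$ decreasing subword ending at $p$ forces $\ell(q)\geq a+1$. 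Combining (1) and (2): if $\mathbf a=(a_1<\cdots<a_t)$ and $\mathbf b=(b_1<\cdots<b_t)$ are place words of longest decreasing subwords with $a_i\leq b_i$ for every $i$, then $y_{a_i}\leq y_{b_i}$, so $a_i$ is $\leq b_i$ in $\text{Pos}(y)$, whence $\mathbf a\leq\mathbf b$ in the partial order on $\mathscr{S}_1(\text{Pos}(y))$; symmetrically $a_i\geq b_i$ for all $i$ gives $\mathbf b\leq\mathbf a$. Thus the componentwise order refines the Sperner order on longest decreasing subwords.

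Next I would prove an \emph{exchange lemma}: if $\mathbf j$ and $\mathbf k$ are place words of longest decreasing subwords, then so are $\mathbf m$ and $\mathbf M$, where $m_a=\min(j_a,k_a)$ and $M_a=\max(j_a,k_a)$. That $m_1<\cdots<m_t$ (and likewise for $\mathbf M$) is immediate from $j_a<j_{a+1}$, $k_a<k_{a+1}$, so each is a decreasing subword of length $t=\tau(y)$ as soon as the values strictly decrease; the only real work is to check $y_{m_a}>y_{m_{a+1}}$ when $m_a$ and $m_{a+1}$ come from different subwords. Say $m_a=j_a\leq k_a$ and $m_{a+1}=k_{a+1}\leq j_{a+1}$: if instead $y_{j_a}\leq y_{k_{a+1}}$, then $y_{k_{a+1}}\geq y_{j_a}>y_{j_{a+1}}$ forces $k_{a+1}<j_{a+1}$, and then $k_1,\ldots,k_{a+1},j_{a+1},\ldots,j_t$ is a decreasing subword of length $t+1$, contradicting $t=\tau(y)$; hence $y_{j_a}>y_{k_{a+1}}$. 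The remaining three cases (one more for $\mathbf m$, two for $\mathbf M$) are dispatched by the symmetric splicings $j_1,\ldots,j_{a+1},k_{a+1},\ldots,k_t$, $\ k_1,\ldots,k_a,j_a,\ldots,j_t$, and $\ j_1,\ldots,j_a,k_a,\ldots,k_t$.

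Finally, since there are only finitely many longest decreasing subwords, iterating the exchange lemma yields a longest decreasing subword $\mathbf k^\ast$ whose place word is componentwise $\leq$ that of every longest decreasing subword; $\mathbf k^\ast$ is then the unique componentwise minimum, and by the second paragraph it is also $\leq$ everything in the Sperner order, so it is the minimum of $\mathscr{S}_1(\text{Pos}(y))$ --- i.e.\ the leftmost longest decreasing subword. The rightmost statement follows in the same way from the $\mathbf M$'s together with the reversed inequality in the second paragraph, using that a maximum of $\mathscr{S}_1(\text{Pos}(y))$ exists and is unique (the cited distributive-lattice fact). I expect the exchange lemma to be the main obstacle --- specifically the value check at the ``seam'' where the min (or max) switches between $\mathbf j$ and $\mathbf k$, which is exactly where the length-$(t+1)$ contradiction is invoked; everything else is routine bookkeeping.
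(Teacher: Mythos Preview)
Your argument is correct. The level function $\ell(\cdot)$, the observation that positions of the same level carry weakly increasing values, and the exchange lemma are all sound (there is a small typo in your list of splicings---the $\mathbf{M}$-cases should read $k_1,\ldots,k_a,j_a,\ldots,j_t$ and $j_1,\ldots,j_a,k_a,\ldots,k_t$---but the intended argument is clear and works).

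The paper takes a shorter, more direct route. It starts from the Sperner minimum $\mathbf{k}$ (which exists by the cited Dilworth lattice fact) and argues by contradiction that $k_i\le j_i$ for every $i$: if $k_i>j_i$, then either $y_{k_i}<y_{j_i}$, in which case the splice $j_1\cdots j_i k_i\cdots k_t$ has length $t+1$, or $y_{k_i}\ge y_{j_i}$, in which case $k_1\cdots k_{i-1}j_i\cdots j_t$ is a length-$t$ decreasing subword not above $\mathbf{k}$ in the Sperner order, contradicting minimality. Your approach instead \emph{constructs} the componentwise minimum first (via the exchange lemma, which in effect shows the place words of longest decreasing subwords form a sublattice under componentwise $\min/\max$) and only afterwards identifies it with the Sperner minimum using the level argument. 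The paper's proof is a few lines; yours is longer but yields the pleasant extra structural fact that componentwise $\min$ and $\max$ of maximal antichains are again maximal antichains, and it never appeals to the Dilworth lattice structure except to name the object.
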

\begin{proof}
Let $i \in [t]$. Suppose for a contradiction that $k_i > j_i$.  If  $y_{k_i} < y_{j_i}$, then  $j_1 \cdots j_i k_i k_{i+1}\cdots k_t$ is the place word of a decreasing subword of length $t+1$, which is impossible.  If  $y_{k_i} \geq y_{j_i}$, then  $y_{k_{i-1}} > y_{k_i} \geq y_{j_i}$, hence $k_1 \cdots k_{i-1} j_i j_{i+1} \cdots j_t$ is the place word of a decreasing subword that is not further right than $y_\mathbf{k}$, contradiction.  The proof of the second statement is similar.
\end{proof}

For a colored word $w$, let $\sw(w)$ denote the southwest entry of $P_\mix(w)$.
The next corollary relates $\tau(w)$ and $\eta$ 
defined above to  $P_\mix(w)$.
We point out that Remmel also defines and studies decreasing hook subwords in \cite{Remmel2} (called decreasing subsequences of type 1 there); he also states the first part of the following proposition.
\begin{proposition}
\label{p special subwords basics}
Let  $w$ be a colored word and let $\eta$ be as in Definition \ref{d hook subwords}.
\begin{list}{\emph{(\roman{ctr})}}{\usecounter{ctr} \setlength{\itemsep}{2pt} \setlength{\topsep}{3pt}}
\item The length  $\tau = \tau(w)$ of the longest decreasing hook subword of $w$ is equal to the length of the first column of $P_\mix(w)$.
\item The letter $\eta$ is equal to $\sw(w)$.
\item If  $\eta$ is barred, then any special subword of  $w$ contains the rightmost occurrence of the letter $\eta$ in $w$.
\item If  $\eta$ is unbarred, then the leftmost special subword of  $w$ contains the leftmost occurrence of the letter $\eta$ in $w$.
\item If  $\eta$ is barred  and  $w_\mathbf{k}$ is the rightmost special subword of  $w$, then all occurrences of $\eta^*$ in  $w$ have place $> k_1$.
\item If  $\eta$ is unbarred and $w_\mathbf{k}$ is the leftmost special subword of  $w$, then all occurrences of $\eta^*$ in  $w$ have place $\leq k_\tau$.
\end{list}
\end{proposition}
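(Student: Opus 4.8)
The plan is to deduce (i)--(ii) from the compatibility of mixed insertion with ordinary (column-wise negated) insertion, and then to obtain (iii)--(vi) by elementary manipulations of decreasing hook subwords once the first two parts are available. For (i), first reduce to the case of a colored permutation: standardization gives a place-preserving bijection between decreasing hook subwords of $w$ and of $w^\stand$ (it is order-preserving on places), so $\tau(w)=\tau(w^\stand)$, while $\sh(P_\mix(w))=\sh(P_\mix(w^\stand))$ by Proposition \ref{p standardization commutes}. For a colored permutation $v$, Proposition \ref{p neg and mixed insertion}(i) shows that $P_\mix(v)$ and $P(v^\bneg)$ have equal shape (conversion preserves shape), and a subword of $v$ is a decreasing hook word precisely when the subword of $v^\bneg$ on the same places is strictly decreasing---the strictly decreasing unbarred part corresponds to the decreasing positive part, the weakly increasing barred part to the decreasing negative part. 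Schensted's theorem for $v^\bneg$ \cite{F} then equates $\tau(v)$ with the first-column length of $P(v^\bneg)$, hence of $P_\mix(v)$ (this part also appears in \cite{Remmel2}). For (ii), apply (i) to $\sub_{\leq\eta'}(w)$ for each letter $\eta'$ of $w$: since $P_\mix(\sub_{\leq\eta'}(w))=\sub_{\leq\eta'}(P_\mix(w))$ (Proposition \ref{p mix commutes with subwords}) is a straight-shape tableau, its first column is an initial segment of the first column of $P_\mix(w)$; taking $\eta'=\sw(w)$ shows $\sw(w)$ is the largest first-column entry of $P_\mix(w)$, so $\sub_{\leq\eta'}(P_\mix(w))$ retains the full first column (length $\tau(w)$) iff $\eta'\geq\sw(w)$. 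Hence the smallest such $\eta'$ is $\sw(w)$.

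For (iii)--(vi) I would first record a preliminary: if $\eta$ is unbarred then every special subword begins with an occurrence of $\eta$, and if $\eta$ is barred then every special subword ends with an occurrence of $\eta$. Indeed $\eta$ is the largest letter $\leq\eta$, so in a decreasing hook word it can occur only as the first letter (if unbarred) or as a trailing letter (if barred); and it must occur, since otherwise the special subword would be a length-$\tau$ decreasing hook subword of $\sub_{\leq\mu}(w)$, where $\mu$ is the largest letter of $w$ strictly below $\eta$, contradicting the minimality of $\eta$. Given this, (iv) follows: the leftmost occurrence of $\eta$, at place $p$, can begin a special subword (in any special subword, which begins with an occurrence of $\eta$ at some place $\geq p$, replace the first place by $p$); since every special subword begins at a place $\geq p$, the componentwise-minimal one---which by Proposition \ref{p lexicographic eq componentwise} applied to $w^{\stand\;\bneg}$ is the leftmost one---begins exactly at $p$.

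Statements (v) and (vi) are proved by contradiction. If an occurrence of $\eta^*$ had place $\leq k_1$ (for the rightmost special subword $w_\mathbf{k}$, in case (v)) or place $>k_\tau$ (for the leftmost $w_\mathbf{k}$, in case (vi)), then prepending, respectively appending, that occurrence of $\eta^*$ to $w_\mathbf{k}$ would produce a decreasing hook subword of length $\tau+1$: prepending the unbarred letter $\eta^*$ (which exceeds every letter $\leq\eta$) lengthens the decreasing unbarred part, and appending the barred letter $\eta^*$ (which dominates every barred letter $\leq\eta$) lengthens the increasing barred part. This subword uses only letters $\leq\eta^*$ in case (v) and only letters $\leq\eta$ in case (vi); but $\tau(\sub_{\leq\eta^*}(w))\leq\tau(w)=\tau$ and $\tau(\sub_{\leq\eta}(w))=\tau$, a contradiction. (In case (v) the new place is in fact strictly less than $k_1$, since $\eta^*\neq w_{k_1}$.)

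The remaining statement (iii) is the crux. Put $f(j):=\tau(\sub_{\leq\eta}(w_1\cdots w_j))$. Then $f$ is weakly increasing, and whenever $w_p=\eta$ (necessarily barred) one has $f(p)=f(p-1)+1$, because $\sub_{\leq\eta}(w_1\cdots w_p)$ is obtained from $\sub_{\leq\eta}(w_1\cdots w_{p-1})$ by appending the maximal available barred letter $\eta$, which extends every decreasing hook subword by exactly one. If $\eta$ occurs at places $p_1<\cdots<p_s$, then $f(p_i-1)<f(p_i)=f(p_i-1)+1\leq f(p_{i+1}-1)$, so $f(p_1-1)<\cdots<f(p_s-1)$; by the preliminary some special subword ends at an occurrence of $\eta$, forcing $f(p_s)=\tau$, hence $f(p_s-1)=\tau-1$ and $f(p_i)\leq\tau-1<\tau$ for $i<s$. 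Thus no decreasing hook subword of length $\tau$ lies inside $w_1\cdots w_{p_i}$ for $i<s$, so every special subword ends at $p_s$, the rightmost occurrence of $\eta$. The main obstacle is recognizing that (iii) is governed by this monotonicity bookkeeping rather than by a direct analysis of the insertion process; the other parts are comparatively routine once (i) is set up.
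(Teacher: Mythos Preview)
Your proof is correct and follows essentially the same approach as the paper for parts (i), (ii), (iv), (v), (vi). The one substantive difference is part (iii): the paper dispatches it in one line, while you build an auxiliary monotone function $f(j)$ and argue by bookkeeping.

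In fact, your own ``preliminary'' already contains the short argument. You showed that every special subword ends with an occurrence of $\eta$ (when $\eta$ is barred). So if a special subword $w_{\mathbf j}$ ends at place $j_\tau$, then $w_{j_\tau}=\eta$. If $j_\tau$ is not the rightmost occurrence $p_s$ of $\eta$, then $j_\tau<p_s$ and you can append the $\eta$ at $p_s$ to $w_{\mathbf j}$, obtaining a decreasing hook subword of length $\tau+1$ (the barred tail $\ldots,\eta,\eta$ remains weakly increasing). Contradiction. This is exactly the paper's argument. Your $f$-function route is valid but unnecessary; you had the short proof in hand and walked past it.
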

\begin{proof}
The analog of (i) for ordinary words is the classical Greene's Theorem \cite{Greene}.  Statement (i) is immediate from this and Proposition \ref{p neg and mixed insertion}. Statement (ii) follows from (i) and Proposition \ref{p mix commutes with subwords}.

Let  $w_\mathbf{j}$ be a special subword of  $w$.
By definition,  $w$ contains letters $\leq \eta$, so if  $\eta$ is barred and $w_\mathbf{j}$ does not contain the rightmost occurrence of  $\eta$, then this can be appended to $w_\mathbf{j}$ to obtain a longer decreasing hook subword, which is impossible. This proves (iii).
For (iv), observe that if  $w_\mathbf{j}$ does not contain the leftmost occurrence of  $\eta$, then replacing $w_{j_1} = \eta$ with the leftmost occurrence of $\eta$ yields a special subword of  $w$ further left than  $w_\mathbf{j}$.

To prove (vi), observe that any occurrence of  $\eta^*$ with place $> k_\tau$ can be appended to  $w_\mathbf{k}$ to obtain a decreasing hook subword of  $w$ of length  $\tau+1$, which is impossible.  The proof of (v) is similar.
\end{proof}

We are now ready to define the color lowering and raising operators on words.
For a colored word $w$ and place word $\mathbf{k}$ of length $t$ such that $w_{k_t}$ is barred, let $\pi_\mathbf{k}(w)$ be the colored word obtained from $w$ by rotating its subword $w_{\mathbf{k}}$ once to the right and then unbarring $w_{k_t}$, i.e.,
\[\pi_\mathbf{k}(w) := w_1 \cdots w_{k_1-1} \mathbf{w_{k_t}^*}  w_{k_1+1} \cdots w_{k_2-1} \mathbf{w_{k_1}} w_{k_2+1} \cdots w_{k_t-1}  \mathbf{w_{k_{t-1}}} w_{k_t+1} \cdots w_n, \]
where the bold letters indicate the rotated subword.
It is clear that $\pi_\mathbf{k}$ is invertible and defines a bijection from colored words with a barred letter in position $k_t$ to colored words with an unbarred letter in position $k_1$. Let $\pi^{-1}_\mathbf{k}$ denote the inverse of $\pi_\mathbf{k}$.

We say that a colored word $w$ is \emph{color lowerable} (resp. \emph{color raisable}) if $\sw(w)$ is barred (resp. unbarred).
For a color lowerable word $w$, define the \emph{color lowering operator on words}, $\pi_-$, by
\[\pi_-(w) := \pi_\mathbf{k}(w) ,\quad \text{where $\mathbf{k}$ is the rightmost special subword of $w$}.\]
For a color raisable word $v$, define the \emph{color raising operator on words}, $\pi_+$, by
\[\pi_+(v) := \pi^{-1}_\mathbf{k}(v) ,\quad \text{where $\mathbf{k}$ is the leftmost special subword of $v$}.\]
Note that these operators are well defined by Proposition \ref{p special subwords basics}.

\begin{example}\label{ex pi-}
Let $w$ and $v$ be the colored words below. The rightmost special subword of $w$ and the leftmost special subword of $v$ are shown in bold and their place words are $1\ 3\ 8\ 11\ 12$. From this we can see that $w$ is color lowerable and $v = \pi_-(w) =\pi_{1\ 3\ 8\ 11\ 12}(w)$ and $v$ is color raisable and $w = \pi_+(v) =\pi^{-1}_{1\ 3\ 8\ 11\ 12}(v)$.
\vspace{1mm}
\[
{\tiny
\begin{array}{rlcccccccccccccc}
    \text{{\small $w$}}                  &  =  &  \mboxtwo{\mathbf{1}} & \mboxtwo{\crc{2}} & \mboxtwo{\mathbf{\crc{1}}} & \mboxtwo{\crc{2}} & \mboxtwo{2} & \mboxtwo{1} & \mboxtwo{\crc{2}} & \mboxtwo{\mathbf{\crc{1}}} & \mboxtwo{1} & \mboxtwo{2} & \mboxtwo{\mathbf{\crc{1}}} & \mboxtwo{\mathbf{\crc{2}}} & \mboxtwo{1} \\[0.6mm]
    \text{{\small $v$}}             &  =  & \mboxtwo{ \mathbf{2}} & \mboxtwo{\crc{2}} & \mboxtwo{\mathbf{1}} & \mboxtwo{\crc{2}} & \mboxtwo{2} & \mboxtwo{1} & \mboxtwo{\crc{2}} & \mboxtwo{\mathbf{\crc{1}}} & \mboxtwo{1} & \mboxtwo{2} & \mboxtwo{\mathbf{\crc{1}}} & \mboxtwo{\mathbf{\crc{1}}} & \mboxtwo{1} \\[2mm]
    \text{{\normalsize ${w}^{\stand}$}}   &  =  & \mboxtwo{\mathbf{4}} & \mboxtwo{ \crc{8}} & \mboxtwo{\mathbf{\crc{1}}} & \mboxtwo{\crc{9}} & \mboxtwo{12} & \mboxtwo{5} & \mboxtwo{\crc{10}} & \mboxtwo{\mathbf{\crc{2}}} & \mboxtwo{6} & \mboxtwo{13} & \mboxtwo{\mathbf{\crc{3}}} & \mboxtwo{\mathbf{\crc{11}}} & \mboxtwo{7} \\[0.6mm]
    \text{{\small ${v}^{\stand}$}}   &  =  & \mboxtwo{\mathbf{11}} & \mboxtwo{ \crc{8}} & \mboxtwo{\mathbf{4}} & \mboxtwo{\crc{9}} & \mboxtwo{12} & \mboxtwo{5} & \mboxtwo{\crc{10}} & \mboxtwo{\mathbf{\crc{1}}} & \mboxtwo{6} & \mboxtwo{13} & \mboxtwo{\mathbf{\crc{2}}} & \mboxtwo{\mathbf{\crc{3}}} & \mboxtwo{7} \\[2mm]
    \text{{\small ${w}^{\stand\;\bneg}$}}           &  =  & \mboxtwo{\mathbf{4}} & \mboxtwo{ -{8}} & \mboxtwo{\mathbf{-{1}}} & \mboxtwo{-{9}} & \mboxtwo{12} & \mboxtwo{5} & \mboxtwo{-{10}} & \mboxtwo{\mathbf{-{2}}} & \mboxtwo{6} & \mboxtwo{13} & \mboxtwo{\mathbf{-{3}}} & \mboxtwo{\mathbf{-{11}}} & \mboxtwo{7} \\[0.6mm]
    \text{{\small ${v}^{\stand\;\bneg}$}}      &  =  & \mboxtwo{\mathbf{11}} & \mboxtwo{ -{8}} & \mboxtwo{\mathbf{4}} & \mboxtwo{-{9}} & \mboxtwo{12} & \mboxtwo{5} & \mboxtwo{-{10}} & \mboxtwo{\mathbf{-{1}}} & \mboxtwo{6} & \mboxtwo{13} & \mboxtwo{\mathbf{-{2}}} & \mboxtwo{\mathbf{-{3}}} & \mboxtwo{7}
\end{array}
}
\vspace{3mm}
\]
There are a total of four decreasing hook subwords of $w^\stand$ of length 5 (these are in bijection with decreasing hook subwords of $w$ and decreasing subwords of  $w^{\stand \; \bneg}$): $4\ \crc{1}\ \crc{2}\ \crc{3}\ \crc{11}$, $4\ \crc{1}\ \crc{9}\ \crc{10}\ \crc{11}$, $4\ \crc{8}\ \crc{9}\ \crc{10}\ \crc{11}$, and $12\ 5\ \crc{2}\ \crc{3}\ \crc{11}$; the first three are special and the fourth is not.
There are a total of three decreasing hook subwords of $v^\stand$ of length 5: $11\ 4\ \crc{1}\ \crc{2}\ \crc{3}$, $11\ 5\ \crc{1}\ \crc{2}\ \crc{3}$, and $12\ 5\ \crc{1}\ \crc{2}\ \crc{3}$; the first two are special and the third is not.

It will be shown in Theorem \ref{t main thm words} that the color lowering operator $(C_-)$ is compatible with the color lowering operator on words $(\pi_-)$ in the following sense:
\vspace{1mm}
\[
\begin{array}{c@{\hspace{.8cm}}c@{\hspace{.7cm}}c}
     \substack{{\tiny \tableau{\crc{1}&1&1&1&\crc{2}\\\crc{1}&\crc{2}&2\\\crc{1}&\crc{2}\\1&2\\\crc{2}\\}} \vspace{3mm} \\ P_\mix(w)} &
     \substack{{\tiny \tableau{\crc{1}&1&1&1&\crc{2}\\\crc{1}&\crc{2}&2\\\crc{1}&\crc{2}\\1&2\\2\\}} \vspace{3mm} \\ P_\mix(\pi_-(w))  =  C_-(P_\mix(w)) } &
     \substack{{\tiny \tableau{1&3&5&9&10\\2&6&13\\4&8\\7&11\\12\\}} \vspace{3mm} \\  Q_\mix(\pi_-(w)) = Q_\mix(w) }
\end{array}
\]
\end{example}

\begin{proposition}
\label{p standardization hook subwords}
Standardization respects decreasing hook subwords and commutes with the color lowering and raising operators:
\begin{list}{\emph{(\alph{ctr})}}{\usecounter{ctr} \setlength{\itemsep}{2pt} \setlength{\topsep}{3pt}}
\item ${C_-(T)}^{\stand} = C_-({T}^{\stand})$,
\item ${C_+(T)}^{\stand} = C_+({T}^{\stand})$,
\item $w_\mathbf{j}$ is a decreasing hook subword of  $w$ if and only if $({w}^{\stand})_\mathbf{j}$ is a decreasing hook subword of ${w}^{\stand}$,
\item same as (c), for special subwords, if  $\eta$ is barred,
\item same as (c), for the leftmost special subword, if  $\eta$ is unbarred,
\item ${\pi_-(w)}^{\stand} = \pi_-({w}^{\stand})$,
\item ${\pi_+(w)}^{\stand} = \pi_+({w}^{\stand})$,
\end{list}
for any colored word $w$ and colored tableau $T$.
\end{proposition}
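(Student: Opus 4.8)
The plan is to prove the seven parts in order, each reducing to earlier results except for one combinatorial input about the rotation $\pi_\mathbf{k}$. Parts (a) and (b) are read off from the definition of standardization of colored tableaux. If $T$ is color lowerable, its southwest cell $z$ lies in the last row of $T$, so (barred letters can repeat only down a column) $z$ is the bottommost copy of its entry $\crc x$, it is the only copy of $\crc x$ in column $1$, and after unbarring it is the leftmost unbarred $x$. Tracking the label given to $z$ through the two orders of operations --- standardize then unbar, versus unbar then standardize --- one checks that both assign it the number of entries of $T$ of natural-order value $\leq \crc x$, while no other cell is affected; (b) is the inverse statement. Part (c) is then immediate: standardization is idempotent, so $(w^\stand)^{\stand\,\bneg} = (w^\stand)^\bneg = w^{\stand\,\bneg}$ and hence $\text{Pos}(w) = \text{Pos}(w^\stand)$, while decreasing hook subwords of a colored word are exactly the antichains of its $\text{Pos}$.

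For (d) and (e), note first that by (c) the decreasing hook subwords of $w$ and of $w^\stand$ are literally the same subwords, and $\tau(w) = \tau(w^\stand)$ by Proposition~\ref{p special subwords basics}(i) together with Proposition~\ref{p standardization commutes} (standardization preserves the shape of $P_\mix$); it remains to compare the conditions ``uses only letters $\leq \eta$'', where $\eta = \sw(w)$ and $\eta' = \sw(w^\stand)$. When $\eta$ is barred (that is, $w$ is color lowerable), the southwest corner of $P_\mix(w)$ is the bottommost copy of $\eta$, so it standardizes to the largest label among the copies of $\eta$; using Proposition~\ref{p mix commutes with subwords} one finds that $\eta'$ is precisely that label and, crucially, that the set of places of $w$ carrying a letter $\leq \eta$ coincides with the set of places of $w^\stand$ carrying a letter $\leq \eta'$. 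Thus $\sub_{\leq\eta}(w)$ and $\sub_{\leq\eta'}(w^\stand)$ occupy the same places and have equal $\text{Pos}$, which gives (d), and since the rightmost special subword is the componentwise-largest one (Proposition~\ref{p lexicographic eq componentwise}), the rightmost special subwords agree. When $\eta$ is unbarred the southwest corner of $P_\mix(w)$ is instead the leftmost copy of $\eta$ and standardizes to the \emph{smallest} such label $\eta'$, so $\sub_{\leq\eta'}(w^\stand)$ sees only the places of $w$ of value $<\eta$ together with the single leftmost place of value $\eta$, and the set of special subwords can genuinely shrink; but by Proposition~\ref{p special subwords basics}(iv) the leftmost special subword of $w$ uses exactly that leftmost copy of $\eta$, so it survives, every special subword of $w^\stand$ pulls back to a special subword of $w$, and comparison via Proposition~\ref{p lexicographic eq componentwise} identifies the survivor as the leftmost special subword of $w^\stand$. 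This proves (e).

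Parts (f) and (g) combine the above. Write $\pi_-(w) = \pi_\mathbf{k}(w)$ with $\mathbf{k}$ the rightmost special subword of $w$; by (d) this is also the rightmost special subword of $w^\stand$, so $\pi_-(w^\stand) = \pi_\mathbf{k}(w^\stand)$, and it suffices to show that $\pi_\mathbf{k}$ itself commutes with standardization. The map $\pi_\mathbf{k}$ unbars $w_{k_t}$ (which equals $\eta$, by Proposition~\ref{p special subwords basics}(iii) and the shape of a hook word), moves it to place $k_1$, and slides $w_{k_1},\dots,w_{k_{t-1}}$ one $\mathbf{k}$-slot to the right. The key point is that, $\mathbf{k}$ being the \emph{rightmost} special subword, within each value class the chosen places are as far right as possible --- by a ``replace and slide right'' argument, with Proposition~\ref{p special subwords basics}(v) controlling the value class of $\eta^{*}$ --- so this rotation never reverses the left-to-right order of two equal letters; hence the standardization label of every value class is undisturbed, and $\pi_\mathbf{k}(w)^\stand = \pi_\mathbf{k}(w^\stand)$ follows by inspection. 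Part (g) is the mirror statement, with $\pi_+(v) = \pi_\mathbf{k}^{-1}(v)$, $\mathbf{k}$ the leftmost special subword of $v$ (also of $v^\stand$, by (e)), and Proposition~\ref{p special subwords basics}(iv),(vi) playing the role of (v).

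The step I expect to be the main obstacle is exactly this last verification that $\pi_\mathbf{k}$ (resp.\ $\pi_\mathbf{k}^{-1}$) commutes with standardization: \emph{a priori} a rotation can interchange the left-to-right order of two equal letters and thereby permute their standardization labels, and ruling this out is precisely where the extremality (rightmost, resp.\ leftmost) of the special subword and parts (iii)--(vi) of Proposition~\ref{p special subwords basics} have to be used. The label bookkeeping in (a), (d), (e) forced by repeated letters is routine but must be carried out with some care.
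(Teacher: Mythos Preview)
Your argument is correct and follows the same skeleton as the paper's proof: (a)--(c) from definitions, (d)--(e) by identifying $\sw(w^{\stand})$ with the appropriate extremal copy of $\eta$ via Proposition~\ref{p standardization commutes} and Proposition~\ref{p special subwords basics}\,(ii)--(iv), and (f)--(g) by combining (d)--(e) with Proposition~\ref{p special subwords basics}\,(iii)--(vi). The paper compresses (f) into the single sentence ``(f) follows from (d) and Proposition~\ref{p special subwords basics}\,(iii),(v)''; your write-up is more explicit in isolating the genuine combinatorial content, namely that $\pi_{\mathbf{k}}(w)^{\stand}=\pi_{\mathbf{k}}(w^{\stand})$ because the rotation along the \emph{rightmost} special subword never carries a letter past another copy of itself. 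Your ``replace and slide right'' justification for this---if some $w_{k_j}$ had an equal letter strictly between $k_j$ and $k_{j+1}$, swapping it in would produce a special subword strictly further right---is exactly the point, and the paper leaves it implicit.

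Two small remarks. In your treatment of (a), the claim that the southwest $\crc{x}$ is ``the only copy of $\crc{x}$ in column $1$'' is false in general (barred letters may repeat down a column), but you never use it: what matters is only that this $\crc{x}$ is bottommost (hence receives the largest label among the $\crc{x}$'s) and that after unbarring it is the leftmost $x$ (hence receives the smallest label among the $x$'s), and both of these you state correctly. Also, the reference to Proposition~\ref{p mix commutes with subwords} in your (d) is not really needed; Proposition~\ref{p standardization commutes} together with Proposition~\ref{p special subwords basics}\,(ii) already gives $\eta'=\sw(w^{\stand})$, and the equality of the place-sets $\sub_{\leq\eta}(w)$ and $\sub_{\leq\eta'}(w^{\stand})$ is immediate from how standardization assigns labels.
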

\begin{proof}
Statements (a)--(c) are immediate from the definitions.  By Proposition \ref{p standardization commutes} and Proposition \ref{p special subwords basics} (ii),
the rightmost (resp.  leftmost) occurrence of $\eta := \sw(w)$ is relabeled by $\sw(w^\stand)$ in the standardization  $w^\stand$ if  $\eta$ is barred (resp.   unbarred).
This, together with (c) and Proposition \ref{p special subwords basics} (iii),(iv), yield (d) and (e).
Finally, (f) follows from (d) and Proposition \ref{p special subwords basics} (iii),(v), and (g) follows from (e) and Proposition \ref{p special subwords basics} (iv),(vi).
\end{proof}

\subsection{Compatibility of the color lowering operators $C_-$ and $\pi_-$}
We now prove the relationship between $C_-$ and $\pi_-$ alluded to in Example \ref{ex pi-}.

We will need the following extension of Proposition \ref{p remove largest letter}.
\begin{lemma}
\label{l remove two largest letters}
Let  $w = w_1 \cdots w_n$ be a colored permutation with largest letter $w_n = \crc{n}$ and second-largest letter $w_b$. Set $w' = w_1 \cdots w_{b-1} w_{b+1} \cdots w_n$ and  $\beta = (w^\inv)_{n-1}$ (thus $\beta = b$ if  $w_b$ is unbarred and $\beta = \crc{b}$ if  $w_b$ is barred).   Let $Q'$ be the tableau obtained from $Q_\mix(w')$ by replacing  $n-1$ with  $n$,  $n-2$ with  $n-1$,  $\ldots$, $b$ with  $b+1$.  If $\tau(w') = \tau(w)$, then
\be \label{el for step 4 wn}
\text{$P_\mix(w) = P_\mix(w') \sqcup  {\tiny \tableau{w_b}}_{(r,c)}$\ \ and\ \ $Q_\mix(w) = Q' \xleftarrow{\lr} \beta$,}
\ee
where  $(r,c)$ is the position of the cell $\sh(Q_\mix(w))/\sh(Q')$.

Similarly, suppose $v$ is a colored permutation with largest letter $v_1 = n$ and second-largest letter $v_b$.  Let  $v'$, $Q'$, and  $(r,c)$ be defined just as $w'$, $Q'$, and $(r,c)$ are above. If $\tau(v') = \tau(v)$, then
\be \label{el for step 4 w1}
\text{$P_\mix(v) = P_\mix(v') \sqcup  {\tiny \tableau{v_b}}_{(r,c)}$\ \ and\ \ $Q_\mix(v) = Q' \xleftarrow{\lr} \beta$.}
\ee
\end{lemma}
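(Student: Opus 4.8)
The plan is to derive \eqref{el for step 4 wn} from three applications of Proposition~\ref{p remove largest letter}. Since $w_n = \crc{n}$ is the largest letter of $w$ and lies at its end, its removal is essentially trivial: writing $\hat{w} := w_1 \cdots w_{n-1}$, Proposition~\ref{p remove largest letter} gives $P_\mix(w) = P_\mix(\hat{w}) \sqcup {\tiny\tableau{\crc{n}}}$ and $Q_\mix(w) = Q_\mix(\hat{w}) \xleftarrow{\lr} \crc{n}$, where in both tableaux the new cell is at the bottom of the first column, because $\crc{n}$ is the maximal letter and its column/left-right insertion simply appends it there. Now $w_b$ is the largest letter of $\hat{w}$, so a second application of Proposition~\ref{p remove largest letter}, removing $w_b$ from $\hat{w}$, expresses $P_\mix(\hat{w})$ and $Q_\mix(\hat{w})$ in terms of $P_\mix(\hat{w}')$ and $Q_\mix(\hat{w}')$, where $\hat{w}' := w_1 \cdots w_{b-1} w_{b+1} \cdots w_{n-1}$; write $(r_0,c_0)$ for the cell reoccupied by $w_b$, and note that the letter left-right inserted into the recording tableau is $\beta$. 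Finally, applying Proposition~\ref{p remove largest letter} once more to $w' = \hat{w}' \cdot \crc{n}$ (again removing the maximal letter $\crc{n}$ from the end) expresses $P_\mix(w')$ and $Q_\mix(w')$ in terms of $P_\mix(\hat{w}')$ and $Q_\mix(\hat{w}')$, with $\crc{n}$ (resp.\ $n$) appended to the bottom of the first column.

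Combining the first two applications, $P_\mix(w) = P_\mix(\hat{w}') \sqcup {\tiny\tableau{w_b}}_{(r_0,c_0)} \sqcup {\tiny\tableau{\crc{n}}}$, the last box at the foot of column~$1$, while $P_\mix(w') = P_\mix(\hat{w}') \sqcup {\tiny\tableau{\crc{n}}}$. By Proposition~\ref{p special subwords basics}(i), $\tau(w)$ and $\tau(w')$ are the lengths of the first columns of $P_\mix(w)$ and $P_\mix(w')$; comparing these shows that the hypothesis $\tau(w') = \tau(w)$ is equivalent to $c_0 \geq 2$ (if $c_0 = 1$ the first column of $P_\mix(w)$ is one box longer than that of $P_\mix(w')$). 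When $c_0 \geq 2$, the cell $(r_0,c_0)$ lies in a row no larger than the first-column length of $P_\mix(\hat{w}')$, so it and the foot-of-column-$1$ cell occupy distinct rows and distinct columns; hence the two ``$\sqcup$'' operations may be interchanged, giving $P_\mix(w) = P_\mix(w') \sqcup {\tiny\tableau{w_b}}_{(r_0,c_0)}$, which is the first assertion of \eqref{el for step 4 wn} with $(r,c) := (r_0,c_0)$.

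For the recording tableaux, unwinding the (order-preserving) relabelings prescribed by the three applications of Proposition~\ref{p remove largest letter} shows that $Q'$ is obtained from a standard tableau $Q_0$ (a relabeling of $Q_\mix(\hat{w}')$) by appending $n$ to the bottom of its first column, i.e.\ $Q' = Q_0 \xleftarrow{\lr}\crc{n}$, and that $Q_\mix(w) = (Q_0 \xleftarrow{\lr}\beta)\xleftarrow{\lr}\crc{n}$. Thus $Q_\mix(w) = Q' \xleftarrow{\lr}\beta$ reduces to the commutation $(Q_0 \xleftarrow{\lr}\beta)\xleftarrow{\lr}\crc{n} = (Q_0 \xleftarrow{\lr}\crc{n})\xleftarrow{\lr}\beta$. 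This is where $c_0 \geq 2$ re-enters: since $Q_\mix$ and $P_\mix$ have the same shape, the insertion path of $\beta$ into $Q_0$ terminates at $(r_0,c_0)$, so when $w_b$ is unbarred (row insertion) this path lies entirely in columns $\geq c_0 \geq 2$, and when $w_b$ is barred (column insertion) it meets the first column only at a row strictly above its bottom. In either case a direct inspection of the two insertion paths---noting that $\xleftarrow{\lr}\crc{n}$ merely appends the maximal letter $n$ at the bottom of the first column---shows that the two left-right insertions commute, completing the proof of \eqref{el for step 4 wn}.

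The proof of \eqref{el for step 4 w1} is the same, with one change: by Proposition~\ref{p remove largest letter}, removing the maximal letter $n$ from the \emph{front} of $v$ (and of $v'$) corresponds on the recording side to the left-right insertion of the letter $1$, which, being the row insertion of the minimal letter, simply prepends $1$ to the first column---again adjoining a box at its bottom. With this substitution, the shape computations, the equivalence of $\tau(v') = \tau(v)$ with $c_0 \geq 2$, and the commutation of the two left-right insertions all carry over verbatim. The main technical obstacle throughout is this last commutation together with the bookkeeping of the successive relabelings of the recording tableaux; it becomes routine once one has the equivalence ``$\tau$ is unchanged $\iff$ the second-largest letter is not reinserted into the first column ($c_0 \geq 2$).''
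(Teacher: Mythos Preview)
Your argument is correct and, for \eqref{el for step 4 wn}, essentially identical to the paper's: your three applications of Proposition~\ref{p remove largest letter} amount to working with left-right insertion of $w^{\inv}$, and your commutation $(Q_0 \xleftarrow{\lr}\beta)\xleftarrow{\lr}\crc{n} = (Q_0 \xleftarrow{\lr}\crc{n})\xleftarrow{\lr}\beta$ is exactly the paper's equation~\eqref{e two insertion orders}, with $Q_0 = P_\lr((w^\inv)_L)$; the paper phrases the key observation as ``the insertion of $\beta$ avoids position $(\tau,1)$'' (a row count) where you say $c_0\ge 2$ (a column count), but these are equivalent.

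For \eqref{el for step 4 w1} the paper instead applies the involution $v\mapsto v^{\rev\,*}$ to reduce to \eqref{el for step 4 wn}, whereas you argue directly. Your direct route works, but the word ``verbatim'' is too strong: the left-right insertion of $1$ is a \emph{row} insertion of the minimal letter, which shifts the entire first column down rather than merely appending a cell as $\xleftarrow{\lr}\crc{n}$ does. In the barred case $\beta=\crc{b}$ this means the two column insertions of $b$ place $b$ at different rows of column~$1$ (row $r_1$ versus row $r_1+1$), and one must check that the bumped letter and the remainder of the path nevertheless agree, and that the final first columns coincide. This check is routine, but it is an extra verification not present in the $w$ case.
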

\begin{proof}
As in the proof of Proposition \ref{p remove largest letter}, we work with left-right insertion of $w^\inv$ instead of mixed insertion of $w$.
Set $(w^\inv)_L = (w^\inv)_1 (w^\inv)_2 \cdots (w^\inv)_{n-2}$. Note that $(w^\inv)_n = \crc{n}$.  We first prove
\be \label{e two insertion orders}
P_\lr((w^\inv)_L) \xleftarrow{\lr} \beta \xleftarrow{\lr} \crc{n} = P_\lr((w^\inv)_L) \xleftarrow{\lr} \crc{n} \xleftarrow{\lr} \beta.
\ee
Set  $\tau = \tau(w)$.
By the assumption  $\tau(w')=\tau(w)$, the number of rows of  $P_\lr(( w^\inv)_L)$, $P_\lr(( w^\inv)_L \, \beta)$, $Q' = P_\lr(( w^\inv)_L \, \crc{n})$, and $P_\lr(w^\inv)$ are  $\tau-1$,  $\tau-1$,  $\tau$, and $\tau$, respectively.  Hence the left-right insertion of $ \crc{n}$ on either side of \eqref{e two insertion orders} simply adds the letter $n$ in a new cell at position $(\tau,1)$ and the insertion path of the left-right insertion of $\beta$ on either side of \eqref{e two insertion orders} does not involve position  $(\tau,1)$.  This proves \eqref{e two insertion orders} and, keeping track of recording tableaux of these left-right insertions, gives
\hoogte=13pt
\breedte=20pt
\Yboxdim20pt
\be \label{e two recording orders}
\begin{array}{lcl}
Q_\lr(w^\inv) &=& Q_\lr((w^\inv)_L) \sqcup {\tiny\tableau{w_b}}_{(r,c)} \sqcup {\tiny\tableau{\crc{n}}}_{(\tau,1)}, \\[2mm]
Q_\lr((w^\inv)_L \ \crc{n}) &=& Q_\lr((w^\inv)_L) \sqcup {\tiny\begin{Young}$\crc{n\!-\!1}$ \cr \end{Young}}_{\, (\tau,1)},
\end{array}
\ee
Noting that  $P_\mix(w')$ is obtained from $Q_\lr((w^\inv)_L\, \crc{n})$ by replacing $\crc{n-1}$ with $\crc{n}$, the desired \eqref{el for step 4 wn} now follows from computations similar to those in the proof of Proposition \ref{p remove largest letter}.

The second statement of the lemma follows from the first applied to $w := v^{\rev \; *}$: to avoid confusion, let  $b_w$, $\beta_w$, $Q'_w$ (resp.   $b_v$, $\beta_v$, $Q'_v$) be  $b$, $\beta$, $Q'$ for \eqref{el for step 4 wn} (resp. \eqref{el for step 4 w1}).  The desired result about $P_\mix(v)$ is immediate from \eqref{el for step 4 wn} and Proposition \ref{p rev star mixed} (i),(iii).  For the desired result about mixed recording tableaux, we first assume $\beta_w$ is unbarred and compute
\[ Q_\mix(v) = {Q_\mix(w)}^{\evac} = \big(Q'_w \xleftarrow{\lr} \beta_w\big)^{\evac} \ke (\reading(Q'_w) b_w)^{\ud \; \rev} \ke b_v \, \reading(Q'_v) \ke  \big(Q'_v \xleftarrow{\lr} \beta_v\big), \]
where the first equality is by Proposition \ref{p rev star mixed} (ii), (iv) and the first  plactic equivalence is by \eqref{e ud rev}; the second plactic equivalence follows from \eqref{e ud rev}, $Q_\mix(w')= Q_\mix((v')^{\rev \; *}) = Q_\mix(v')^\evac$, and  $b_v = n+1-b_w$.  The case $\beta_w$ is barred is similar.
\end{proof}

\begin{theorem}
 \label{t main thm words}
For a color lowerable word $w$,
\be \label{et rotate words-}
\text{$P_\mix(\pi_-(w)) = C_-(P_\mix(w))$\ \ and\ \ $Q_\mix(\pi_-(w)) = Q_\mix(w)$.}
\ee
Similarly, for a color raisable word $v$,
\be \label{et rotate words+}
\text{$P_\mix(\pi_+(v)) = C_+(P_\mix(v))$\ \ and\ \ $Q_\mix(\pi_+(v)) = Q_\mix(v)$.}
\ee
\end{theorem}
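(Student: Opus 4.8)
The plan is to prove \eqref{et rotate words-}; \eqref{et rotate words+} then follows by applying it to the color lowerable word $\pi_+(v)$, since $\pi_+ = \pi_-^{-1}$, $C_+ = C_-^{-1}$, and $P_\mix$ carries color lowerable words to color lowerable tableaux (by Proposition \ref{p special subwords basics} (ii), $\sw(w)$ is the southwest entry of $P_\mix(w)$). By Propositions \ref{p standardization commutes} and \ref{p standardization hook subwords} (f), (g) we may assume $w$ is a colored permutation, and we induct on $n$, the case $n = 1$ being trivial. Let $m$ be the largest letter of $w$ for $<$; since $w$ is color lowerable, $\eta := \sw(w)$ is barred, so $m \ne \eta$.

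If $m$ is unbarred, then $m > \eta$, so $m$ appears in no special subword of $w$ and $\pi_-$ fixes its position. Deleting $m$ from $w$ (resp. $\pi_-(w)$) gives colored permutations $w'$ and $\pi_-(w')$ --- the latter because the rightmost special subword is unaffected --- with $w'$ color lowerable and $\tau(w') = \tau(w)$. By Proposition \ref{p remove largest letter}, $Q_\mix(w)$ and $Q_\mix(\pi_-(w))$ are the same left-right insertion (of the common position of $m$) applied to the same relabeling of $Q_\mix(w') = Q_\mix(\pi_-(w'))$ (equal by induction), so $Q_\mix(\pi_-(w)) = Q_\mix(w)$; and $P_\mix(w)$, $P_\mix(\pi_-(w))$ arise from $P_\mix(w')$, $P_\mix(\pi_-(w'))$ by adjoining a cell in the same position $(r,c)$. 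As $\eta$ is barred, $(r,c)$ is not in the first column, so adjoining it commutes with $C_-$, and the inductive hypothesis gives $P_\mix(\pi_-(w)) = C_-(P_\mix(w))$.

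The essential case is $m$ barred: then $m = \crc{n}$, $\eta = \sw(w) = \crc{n}$, the rightmost special subword $w_{\mathbf{k}}$ ends at the unique $\crc{n}$, at position $k_t$, and $\pi_-(w)$ shifts $w_{k_1}, \dots, w_{k_t}$ cyclically to the right and unbars $\crc{n}$, putting $n$ at position $k_1$. Let $w^\circ, v^\circ$ be $w, \pi_-(w)$ with $\crc{n}, n$ removed; then $v^\circ$ is $w^\circ$ with each special-subword entry $w_{k_i}$ ($i<t$) moved right past the letters separating it from $w_{k_{i+1}}$. Proposition \ref{p mix commutes with subwords} (with $\alpha$ the second-largest letter) identifies $P_\mix(w^\circ)$ with $P_\mix(w)$ minus its southwest cell $(\tau,1)$ and $P_\mix(v^\circ)$ with $P_\mix(\pi_-(w))$ minus one cell; and $C_-(P_\mix(w)) = P_\mix(w^\circ) \sqcup {\tiny \tableau{n}}_{(\tau,1)}$. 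So it suffices to show $P_\mix(v^\circ) = P_\mix(w^\circ)$ and $Q_\mix(\pi_-(w)) = Q_\mix(w)$, the latter then forcing the deleted cell of $P_\mix(\pi_-(w))$ to be $(\tau,1)$. Since $w^\circ$ and $v^\circ$ have the same barred letters and --- crucially using that $w_{\mathbf{k}}$ is a longest decreasing hook subword that is rightmost among those with letters $\leq \eta$ --- the shift above is a composition of Knuth transformations of the negated words, one gets $(w^\circ)^{\stand\,\bneg} \ke (v^\circ)^{\stand\,\bneg}$, hence $P\big((w^\circ)^{\stand\,\bneg}\big) = P\big((v^\circ)^{\stand\,\bneg}\big)$; undoing the common sequence of conversions via Proposition \ref{p neg and mixed insertion} yields $P_\mix(w^\circ) = P_\mix(v^\circ)$. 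The remaining identity $Q_\mix(\pi_-(w)) = Q_\mix(w)$ is obtained by matching the recording-tableau parts of Proposition \ref{p remove largest letter} and Lemma \ref{l remove two largest letters} --- a column left-right insertion from the barred $\crc{n}$ at position $k_t$ in $w$ against a row left-right insertion from the unbarred $n$ at position $k_1$ in $\pi_-(w)$ --- with a case split according to whether the second-largest letter of $w$ lies in the first column of $P_\mix(w)$ (which governs whether removing it drops $\tau$, hence the applicability of Lemma \ref{l remove two largest letters}).

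The main obstacle is this barred case, and within it two points. First, the Knuth-equivalence $(w^\circ)^{\stand\,\bneg} \ke (v^\circ)^{\stand\,\bneg}$: a bare rotation of a decreasing word is not a Knuth transformation and becomes one only through the letters separating the entries of the special subword, so this is precisely where the ``rightmost'' and maximal-length properties of the special subword enter. Second, matching the two recording-tableau computations, which compares a column insertion (on the $w$ side) with a row insertion (on the $\pi_-(w)$ side) and accounts for the change in $Q_\mix$ caused by the shift.
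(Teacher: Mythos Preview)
Your outline has the right overall shape, but there are genuine gaps at the two places you yourself flag as ``obstacles'', and one earlier circularity.

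\textbf{Circularity in deducing \eqref{et rotate words+} from \eqref{et rotate words-}.} You invoke $\pi_+=\pi_-^{-1}$, but that is Corollary~\ref{t pi bijection}, which the paper explicitly derives \emph{from} this theorem (and notes does not seem easy to prove directly). Without it you do not know that $w:=\pi_+(v)$ is color lowerable, nor that $\pi_-(w)=v$. The paper avoids this by applying \eqref{et rotate words-} to $w:=v^{\rev\,*}$ and using that $u\mapsto u^{\rev\,*}$ swaps leftmost and rightmost special subwords, together with Proposition~\ref{p rev star mixed}.

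\textbf{Case split.} Your dichotomy ``$m$ unbarred'' vs.\ ``$m$ barred'' misses the case $m$ barred with $m>\eta$ (e.g.\ $w=\crc{2}\,\crc{1}$). The correct split is $m>\eta$ versus $m=\eta$; your ``unbarred'' argument works verbatim whenever $m>\eta$. Also, the reason the added cell avoids the southwest corner is $\tau(w')=\tau(w)$, not ``$\eta$ is barred''.

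\textbf{The essential case.} Here your plan is to delete $\crc{n}$ from $w$ and $n$ from $\pi_-(w)$ to get $w^\circ,v^\circ$, then (a) show $(w^\circ)^{\stand\,\bneg}\ke(v^\circ)^{\stand\,\bneg}$, and (b) show $Q_\mix(w)=Q_\mix(\pi_-(w))$. Both are asserted, not proved. For (a), the passage from $w^\circ$ to $v^\circ$ moves each $w_{k_i}$ rightward past the intervening letters; you would need to check that every such adjacent swap is a Knuth move in the $\bneg$-alphabet, and this genuinely uses both ``rightmost'' and ``maximal length'' in a way you have not spelled out. For (b), the difficulty is sharper than you indicate: Proposition~\ref{p remove largest letter} gives $Q_\mix(w)$ as a column insertion of $k_t$ into a relabeling of $Q_\mix(w^\circ)$, and $Q_\mix(\pi_-(w))$ as a row insertion of $k_1$ into a relabeling of $Q_\mix(v^\circ)$; but $Q_\mix(w^\circ)\ne Q_\mix(v^\circ)$ in general (they record different words), so there is no common base tableau to compare the two insertions on. Your reference to Lemma~\ref{l remove two largest letters} does not resolve this: that lemma requires the largest letter to sit at an end of the word, and even then compares insertions into the \emph{same} $Q'$.

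The paper handles the essential case differently: after reducing to $m=\eta$ (Step~2), it further reduces so that $\crc{n}$ is the last letter and $n$ the first (Step~3), then peels off the second-largest letter whenever it is not the entry at $(\tau-1,1)$ (Step~4, using Lemma~\ref{l remove two largest letters}), and finally treats two base cases (Steps~5a,~5b) by explicit mixed and dual mixed insertions. The key point is that each reduction produces shorter words $w',v'$ with $\pi_-(w')=v'$, so induction applies directly and there is never a need to compare $Q_\mix$ of two unrelated words.
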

\begin{proof}
We first show that \eqref{et rotate words+} follows from \eqref{et rotate words-} by applying \eqref{et rotate words-} to $w := v^{\rev \; *}$. The operators ${}^{\rev}$ and  ${}^*$ do not commute with  standardization, so we need to assume that $v$ is a colored permutation (this implies the general case by Step 1, below).  The automorphism $u \mapsto u^{\rev \; *}$ of colored permutations identifies leftmost special subwords with rightmost special subwords, so  $\pi_+(v)^{\rev \; *} = \pi_-(w)$.  This gives the second to last equality of
\[ C_+(P_\mix(v)) = C_+(P_\mix(w)^{*\;\transpose}) = C_-( P_\mix(w))^{*\;\transpose} = P_\mix(\pi_-(w))^{*\;\transpose} = P_\mix({\pi_+(v)}^{\rev\;*})^{*\;\transpose} = P_\mix(\pi_+(v));\]
the first and last equalities are by Proposition \ref{p rev star mixed} (i), (iii), the middle equality is by \eqref{et rotate words-}, and the second equality is clear.
A similar computation using Proposition \ref{p rev star mixed} (ii), (iv) yields $Q_\mix(\pi_+(v)) = Q_\mix(v)$.

We now prove \eqref{et rotate words-}. Let  $w$ be a color lowerable word and set $v = \pi_-(w)$.
Let $\tau = \tau(w)$ be the maximum length of a decreasing hook subword of $w$.  Let  $\eta = \sw(w)$ be the southwest entry of $P_\mix(w)$; we are assuming that this entry is barred, so set $\crc{x} = \eta$.
Let $\mathbf{k}$ be the place word of the rightmost special subword of $w$; thus $w_{k_\tau} = \eta$ by Proposition \ref{p special subwords basics} (iii).
Let  $n$ be the length of $w$.
The proof is by induction on $n$. The base case $n=1$ is clear. The proof begins with three straightforward reductions (Steps 1--3), followed by a substantial reduction (Step 4) and two cases (Steps 5a and 5b). Step 4 is particularly interesting because it explains why it is the rightmost special subword that needs to be rotated (and not some other subword, for instance).

\textbf{{Step 1}} It is convenient to assume that $w$ is a colored permutation, and this is accomplished by replacing $w$ with ${w}^{\stand}$.  The theorem for  ${w}^{\stand}$ proves it for $w$ by Propositions \ref{p standardization commutes} and \ref{p standardization hook subwords}.

\textbf{{Step 2}}
We may assume that $\crc{x}$ is the largest letter in $w$ (for $<$). If not, let $\alpha > \crc{x}$ be the largest letter in $w$ and let $w'$ (resp. $v'$) be  $w$ (resp. $v$) with  $\alpha$ removed.  Then $\pi_-(w') = v'$ because  $\alpha$  does not belong to the rightmost special subword of  $w$ (by Proposition \ref{p special subwords basics} (ii)). By induction, $C_-(P_\mix(w')) =  P_\mix(v')$ and $Q_\mix(w') = Q_\mix(v')$.
Now Proposition \ref{p remove largest letter} says that $Q_\mix(w)$ and $Q_\mix(v)$ are obtained from $ Q_\mix(w') = Q_\mix(v')$ by the same procedure, hence $Q_\mix(w) = Q_\mix(v)$.  Proposition \ref{p remove largest letter} also proves $C_-(P_\mix(w))= C_-(P_\mix(w')) \sqcup {\tiny \tableau{\alpha}}_{(r,c)} = P_\mix(v') \sqcup {\tiny \tableau{\alpha}}_{(r,c)}=P_\mix(v)$; here we are using that $(r,c)$ is not the position of the southwest cell of  $P_\mix(w)$, which follows from the fact that  $\alpha$ does not belong to the rightmost special subword of  $w$.

Note that once we assume $\crc{x}$ is the largest letter of  $w$, this implies that the bottom
($\tau$-th)  row of  $P_\mix(w)$ consists of a single cell containing $\crc{x}$.

\textbf{{Step 3}}
We may assume that  $\crc{x}$ is the last letter of  $w$, i.e.  $k_\tau = n$, and that $x$ is the first letter of $v$, i.e.  $k_1 =1$.
We will only show that the case  $k_\tau < n$ can be reduced to the case $k_\tau =n$, the reduction from $k_1 > 1$ to $k_1 =1$ being similar.
Suppose $k_\tau < n$ and set $w' = w_1 w_2 \cdots w_{n-1}$ and $v' = v_1 v_2 \cdots v_{n-1}$.
Deleting $w_n$ does not change the rightmost special subword, that is, $w'_\mathbf{k}$ is the rightmost special subword of  $w'$, hence $\pi_-(w') = v'$.
This implies $\sw(w') = \sw(w) = \crc{x}$ and  $\tau(w')=\tau(w)$ (by Proposition \ref{p special subwords basics} (i), (ii)).
Now we claim that the insertion paths of $P_\mix(w') \xleftarrow{\mix} w_{n}$ and $P_\mix(v') \xleftarrow{\mix} v_{n}$ are identical and do not involve positions $(\tau,1)$ and $(\tau+1,1)$.
If the insertion path of $P_\mix(w') \xleftarrow{\mix} w_{n}$ involved $(\tau,1)$ or $(\tau+1,1)$, then $\sw(w') \neq \sw(w)$ or $\tau(w') \neq \tau(w)$, which is impossible. Then since $P_\mix(w')$ and $P_\mix(v')$ differ only in their southwest cell, and $\crc{x}$ and $x$ are the largest letters of $P_\mix(w')$ and $P_\mix(v')$, respectively, the claim follows (this uses Step 2, which is not strictly necessary, but makes this argument slightly easier to say).
This claim and induction give the desired equalities in \eqref{et rotate words-}.

\textbf{{Step 4}}
By Steps 2 and 3, we may assume that $w_n = \crc{x}$ is the largest letter in $w$ and $v_1 = v_{k_1} = x$.
Set $w_L = w_1 w_2 \cdots w_{n-1}$ and $v_R = v_2 v_3 \cdots v_{n}$. Let $\eta'$ be the entry in position $(\tau-1,1)$ of  $P_\mix(w)$ (we are assuming $n>1$, so by the note at the end of Step 2, $P_\mix(w)$ has at least two rows).

The goal of Step 4 is to reduce to the case that $\eta'$ is the largest letter of $w_L$; the argument is similar to, but more involved than Step 2.
Suppose that the largest letter  $\alpha$ of $w_L$ is not  $\eta'$.  Let  $w'$ (resp. $v'$) be  $w$ (resp. $v$) with  $\alpha$ removed.
Let us first establish some basic facts.  Note that decreasing hook subwords of $w$ of length  $\tau$ must use  $ \crc{x}$, hence
\be \label{e w wL bijection}
\parbox{14cm}{the map  $(w_L)_\mathbf{j} \mapsto (w_L)_\mathbf{j}\, \crc{x}$ is a bijection between decreasing hook subwords of $w_L$ of length  $\tau-1$ and decreasing hook subwords of $w$ of length $\tau $.}
\ee
Because $w_n = \crc{x}$ is the largest letter of $w$ and by the note at the end of Step 2,  $P_\mix(w) = P_\mix(w_L) \sqcup {\tiny\tableau{\crc{x}}}_{(\tau,1)}$.  Hence
\be
\sw(w_L) = \eta' < \alpha. \label{e2 step 4 basics}
\ee

We now prove
\be
\label{e pi 4}
\pi_-(w') = v'.
\ee
To prove this, we must show that $w_\mathbf{k}$ is the rightmost special subword of $w'$.  In particular, we must show that
\be  \label{e alpha does not belong}
\text{$\alpha$ does not belong to $w_{\mathbf{k}}$.}
\ee
In fact, this is sufficient as any special subword of  $w'$ further right than  $w_\mathbf{k}$ would yield a special subword of  $w$ further right than  $w_\mathbf{k}$.
We now suppose that  $\alpha$ does belong to $w_\mathbf{k}$ and will obtain a contradiction. There are two cases depending on whether or not $\alpha$ is barred.

\emph{The case $\alpha$ is unbarred:}
In this case, we must have $w_1 = w_{k_1} = \alpha$.  Then by \eqref{e2 step 4 basics},  $w_1$ does not belong to the rightmost special subword of  $w_L$.
By Proposition \ref{p lexicographic eq componentwise} applied to  $w^{\bneg}$, every decreasing hook subword  $w_\mathbf{j}$ of  $w$ of length $\tau$ must satisfy $j_1 \leq k_1 = 1$, i.e.  $w_\mathbf{j}$ must contain  $w_1$.
Then by \eqref{e w wL bijection}, any decreasing hook subword of $w_L$ of length  $\tau -1$ must contain  $w_1$, contradiction.

\emph{The case $\alpha$ is barred:}
In this case,  $w_{k_{\tau-1}} = \alpha$.  Let  $\eta'' $ be the rightmost letter in the rightmost special subword of  $w_L$.
First note that \eqref{e2 step 4 basics} implies $\eta'' \leq \eta' < \alpha$.
Consider the subword of $w$ obtained by adding  $\crc{x}$ to the end of the rightmost special subword $w_L$.  Comparing this to  $w_\mathbf{k}$ using Proposition \ref{p lexicographic eq componentwise} shows that $\eta''$ lies to the left of $\alpha$.
But this implies $\alpha$ can be added to the end of the rightmost special subword of $w_L$ to obtain a decreasing hook subword of  $w_L$ of length $\tau$, contradiction.

Now that \eqref{e pi 4} has been established, induction yields $C_-(P_\mix(w')) =  P_\mix(v')$ and $Q_\mix(w') = Q_\mix(v')$.
The desired result \eqref{et rotate words-} now follows from Lemma \ref{l remove two largest letters} (with $w_b = v_b = \alpha$).


\medskip

We may now assume that  $\eta' < \crc{x}$ are the two largest letters in  $w$.
Note that this implies that the last two rows of  $P_\mix(w)$ look like  ${\tiny \tableau{\eta' \\ \crc{x}}}$ with no cells to their right.
Let  $P_0^w$ be the result of  removing the last two rows of $P_\mix(w)$.
Now there are two cases: either $\eta'$ is barred (Step 5a) or it is unbarred (Step 5b).


\textbf{{Step 5a}}
Set  $\crc{y} = \eta'$,
\begin{align*}
w' &= w_L, \\
v_{LR} &= v_2 v_3 \cdots v_{n-1}, \ \  v_L = v_1 v_2 \cdots v_{n-1},\ \  v' = y \, v_{LR},
\end{align*}
and  $\mathbf{k}' = k_1\, k_2  \cdots k_{\tau-1}$ (see Example \ref{ex step 5}).
By the definitions, $\mathbf{k}'$ is the place word of a decreasing hook subword of $w'$ and of $v'$ and  $\pi_{\mathbf{k}'}(w') = v'$.
By Step 4, every decreasing hook subword of $w_L$ (resp.  $w$) of length  $\tau -1$ (resp.  $\tau$) is a special subword.  Together with \eqref{e w wL bijection}, this establishes that  $(w_L)_{\mathbf{k}'}$ is the right most special subword of $w_L$.
Therefore $\pi_-(w') = v'$.
By induction,
\be \label{e step 5a induction}
\text{$C_-(P_\mix(w')) =  P_\mix(v')$\ \ and\ \ $Q_\mix(w') = Q_\mix(v')$.}
\ee

From our work in Step 4, the tableaux $P_\mix(w)$ and $Q_\mix(w)$ are easily computed in terms of $P_\mix(w')$ and $Q_\mix(w')$:
\be \label{e step5a w tableaux}
\begin{array}{lllll}
P_\mix(w') & = & P_0^w \sqcup {\tiny \tableau{\crc{y}}}_{(\tau-1,1)}, \\[1.3mm]
P_\mix(w) & = & P_0^w \sqcup {\tiny \tableau{\crc{y}}}_{(\tau-1,1)} \sqcup {\tiny \tableau{\crc{x}}}_{(\tau,1)}, \\[1.3mm]
Q_\mix(w) &=& Q_\mix(w') \sqcup {\tiny \tableau{n}}_{(\tau,1)}.
\end{array}
\ee

The tableaux for $v$ and $v'$ require slightly more care to compute:
\be \label{e step5a v tableaux}
\begin{array}{lllll}
P_\mix(v') & = & P_0^w \sqcup {\tiny \tableau{y}}_{(\tau-1,1)}, \\[1.3mm]
P_\mix(v') & = & P_\mix(v_{LR}) \xleftarrow{\dualmix} y, \\[1.3mm]
P_\mix(v_L) & = & P_\mix(v_{LR}) \xleftarrow{\dualmix} x &=& P_0^w \sqcup {\tiny \tableau{x}}_{(\tau-1,1)}, \\[1.3mm]
P_\mix(v) & = & P_\mix(v_L) \xleftarrow{\mix} \crc{y} & = & P_0^w \sqcup {\tiny \tableau{\crc{y}}}_{(\tau-1,1)} \sqcup {\tiny \tableau{x}}_{(\tau,1)}, \\[1.3mm]
Q_\mix(v) &=& Q_\mix(v_L) \sqcup {\tiny \tableau{n}}_{(\tau,1)} &=& Q_\mix(v') \sqcup {\tiny \tableau{n}}_{(\tau,1)}.
\end{array}
\ee
The first line is immediate from \eqref{e step 5a induction} and \eqref{e step5a w tableaux}.  The second line is clear given Proposition \ref{p dual mixed}.  The third line follows from first two and the fact that $(v')^{\stand} = (v_L)^{\stand}$.
For the fourth line, the mixed insertion of $\crc{y}$ bumps the $x$ in position $(\tau-1,1)$ and then places $x$ in a new cell at position $(\tau,1)$.
This mixed insertion computation together with $(v')^{\stand} = (v_L)^{\stand}$ gives the last line.
The desired result \eqref{et rotate words-} now follows from \eqref{e step 5a induction}, \eqref{e step5a w tableaux}, and \eqref{e step5a v tableaux}.

\textbf{{Step 5b}}
Set  $y = \eta'$,
\begin{align*}
w_{LR} &= w_2 w_3 \cdots w_{n-1},\ \ w_R = w_2 w_3 \cdots w_{n},\ \ w' = w_{LR}\, \crc{y},\\
v' &= v_R,
\end{align*}
and  $\mathbf{k}' = k_2-1\ k_3-1 \, \cdots \, k_{\tau}-1$ (see Example \ref{ex step 5}).
By the definitions and Steps 2 and 3, $w_1 = y$, $\mathbf{k}'$ is the place word of a decreasing hook subword of $w'$ and of $v'$, and $\pi_{\mathbf{k}'}(w') = v'$.
Since $w_1 = y$ is the largest unbarred letter in $w$, decreasing hook subwords of $w$ of length $\tau$ must use $y$ and this yields a bijection between decreasing hook subwords of $w$ of length $\tau$ and decreasing hook subwords of $w_R$ of length $\tau-1$.
Since $w_n = \crc{x}$ is the largest letter of  $w$, every decreasing hook subword of $w_R$ (resp.  $w$) of length  $\tau -1$ (resp.  $\tau$) is a special subword.
These facts, together with $(w')^{\stand} = (w_R)^{\stand}$, imply that $w'_{\mathbf{k}'}$ is the rightmost special subword of $w'$. Hence $\pi_-(w') = v'$.
By induction,
\be \label{e step 5b induction}
\text{$C_-(P_\mix(w')) =  P_\mix(v')$\ \ and\ \ $Q_\mix(w') = Q_\mix(v')$.}
\ee

Next, we prove
\be \label{e step5b w tableaux}
\begin{array}{lllll}
P_\mix(w_{LR}) &=& P_0^w, \\[1.3mm]
P_\mix(w_R) & = & P_\mix(w_{LR}) \xleftarrow{\mix} \crc{x} &=& P_0^w \sqcup {\tiny \tableau{\crc{x}}}_{(\tau-1,1)}, \\[1.3mm]
P_\mix(w') & = & P_0^w \sqcup {\tiny \tableau{\crc{y}}}_{(\tau-1,1)}, \\[1.3mm]
P_\mix(w) & = & P_\mix(w_R) \xleftarrow{\dualmix} y & = & P_0^w \sqcup {\tiny \tableau{y}}_{(\tau-1,1)} \sqcup {\tiny \tableau{\crc{x}}}_{(\tau,1)}, \\[1.3mm]
Q_\mix(w) &=& Q_\mix(w_R) \sqcup {\tiny \tableau{n}}_{(\tau,1)} &=& Q_\mix(w') \sqcup {\tiny \tableau{n}}_{(\tau,1)}.
\end{array}
\ee
The first line follows from the note at the end of Step 4 and the fact that $w_1 = y$ and $w_n = \crc{x}$ are the two largest letters of $w$. The second line is an easy consequence of the first. The third line follows from the second as $(w')^{\stand} = (w_R)^{\stand}$. For the fourth line, the dual mixed insertion of $y$ bumps the $\crc{x}$ in position $(\tau-1,1)$ and then places $\crc{x}$ in a new cell at position $(\tau,1)$. This dual mixed insertion computation, together with $(w')^{\stand} = (w_R)^{\stand}$, gives the last line.

We also have
\be \label{e step5b v tableaux}
\begin{array}{lllll}
P_\mix(v') & = & P_0^w \sqcup {\tiny \tableau{y}}_{(\tau-1,1)}, \\[1.3mm]
P_\mix(v) & = & P_\mix(v') \xleftarrow{\dualmix} x &=& P_0^w \sqcup {\tiny \tableau{y}}_{(\tau-1,1)} \sqcup {\tiny \tableau{x}}_{(\tau,1)}, \\[1.3mm]
Q_\mix(v) &=& Q_\mix(v') \sqcup {\tiny \tableau{n}}_{(\tau,1)},
\end{array}
\ee
where the first line follows from \eqref{e step 5b induction} and \eqref{e step5b w tableaux}; the second and third lines are then clear as the dual mixed insertion $P_\mix(v') \xleftarrow{\dualmix} x$ simply adds a new cell containing $x$ in position $(\tau,1)$.
The desired result \eqref{et rotate words-} now follows from \eqref{e step 5b induction}, \eqref{e step5b w tableaux}, and \eqref{e step5b v tableaux}.
\end{proof}

Theorem \ref{t main thm words} has the following corollary, which does not seem easy to prove directly.
\begin{corollary}
\label{t pi bijection}
The operators $\pi_-$ and $\pi_+$ are inverses of each other and define a bijection between color lowerable words and color raisable words.
\end{corollary}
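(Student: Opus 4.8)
The plan is to derive this formally from Theorem \ref{t main thm words}, using that mixed insertion is injective on colored words. First I would check that $\pi_-$ and $\pi_+$ really do map between the two sets named: for a color lowerable word $w$, Theorem \ref{t main thm words} gives $P_\mix(\pi_-(w)) = C_-(P_\mix(w))$, whose southwest entry is unbarred by the definition of $C_-$; since $\sw(\pi_-(w))$ is precisely that entry, $\pi_-(w)$ is color raisable. The same argument applied to $P_\mix(\pi_+(v)) = C_+(P_\mix(v))$ shows that $\pi_+$ carries color raisable words to color lowerable ones. In particular the composites $\pi_+\circ\pi_-$ and $\pi_-\circ\pi_+$ are defined.

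Next I would show $\pi_+\circ\pi_- = \mathrm{id}$ on color lowerable words, the identity $\pi_-\circ\pi_+ = \mathrm{id}$ being symmetric (it can also be obtained from the $^{\rev\,*}$ symmetry already exploited in the proof of Theorem \ref{t main thm words}). Fix a color lowerable $w$, set $v = \pi_-(w)$, which is color raisable by the previous step. Applying Theorem \ref{t main thm words} twice and using that $C_+$ and $C_-$ are mutually inverse bijections between color raisable and color lowerable tableaux (\textsection\ref{ss color raisable and lowerable tableaux}), I would compute
\[ P_\mix(\pi_+(v)) = C_+(P_\mix(v)) = C_+\big(C_-(P_\mix(w))\big) = P_\mix(w), \qquad Q_\mix(\pi_+(v)) = Q_\mix(v) = Q_\mix(w). \]
So $\pi_+(\pi_-(w))$ and $w$ have the same mixed insertion and mixed recording tableaux; by injectivity of mixed insertion this forces $\pi_+(\pi_-(w)) = w$. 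Together with the symmetric statement, $\pi_-$ and $\pi_+$ are mutually inverse maps between color lowerable and color raisable words, hence both are bijections inverse to one another.

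The one point I expect to need a little care — the main (mild) obstacle — is the injectivity of mixed insertion on general colored words invoked above. For colored permutations this is part of Haiman's theory \cite{Hmixed} (via the correspondence between $(P_\mix,Q_\mix)$ and $(P_\lr,Q_\lr)$ in \cite[Theorem 4.3]{Hmixed}); the general case reduces to it using Proposition \ref{p standardization commutes} (so that $w^\stand$ and $Q_\mix(w)$ are determined by the pair $(P_\mix(w),Q_\mix(w))$), the fact that the content of $w$ is visible in $P_\mix(w)$ (Proposition \ref{p plain basics}(i)), and the observation that a colored word is recovered from its standardization together with its content. This is routine bookkeeping of the same flavor as in Section \ref{s colored tableaux and insertion algorithms}, so I do not anticipate real difficulty.
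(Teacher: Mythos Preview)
Your proposal is correct and is exactly the argument the paper has in mind: the paper simply states the corollary as an immediate consequence of Theorem~\ref{t main thm words} (noting that it ``does not seem easy to prove directly''), and you have spelled out the natural details using that $C_+, C_-$ are inverse and that $w \mapsto (P_\mix(w), Q_\mix(w))$ is injective. One tiny quibble: you do not need Proposition~\ref{p plain basics}(i) for the destandardization step---the multiset of \emph{colored} letters of $w$ (which is what you actually need, not the content of $w^\plain$) is trivially read off from $P_\mix(w)$, and together with $w^\stand$ this recovers $w$.
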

%
%
%

\begin{example}
 \label{ex step 5} A possibility for Step 5a of the proof of Theorem \ref{t main thm words} is
\[
\begin{array}{rcl}
w  &=& \mathbf{5}\ \mathbf{3}\ \crc{2}\ \mathbf{1}\ \mathbf{\crc{6}}\ 4\ \mathbf{\crc{7}} \\
v  &=& \mathbf{7}\ \mathbf{5}\ \crc{2}\ \mathbf{3}\ \mathbf{1}\       4\ \mathbf{\crc{6}} \\
w' &=& \mathbf{5}\ \mathbf{3}\ \crc{2}\ \mathbf{1}\ \mathbf{\crc{6}}\ 4\ \\
v' &=& \mathbf{6}\ \mathbf{5}\ \crc{2}\ \mathbf{3}\ \mathbf{1}\       4,\
\end{array}
\]
where the bold letters indicate the rightmost special subwords of $w$ and $w'$ and the leftmost special subwords of $v$ and $v'$.
For this example, $\tau = \tau(w) = 5$, $\eta = \crc{7}$, $x = 7$, $\eta' = \crc{6}$, $y = 6$, and
\[
P_0^w =
{\tiny\tableau{1&\crc{2}&4\\3\\5}}\ \
\quad
P_\mix(w') =
{\tiny\tableau{1&\crc{2}&4\\3\\5\\\crc{6}}} \
\quad
P_\mix(v') =
{\tiny\tableau{1&\crc{2}&4\\3\\5\\6}}\ \ \
\quad
P_\mix(w) =
{\tiny\tableau{1&\crc{2}&4\\3\\5\\\crc{6}\\\crc{7}}}\
\quad
P_\mix(v) =
{\tiny\tableau{1&\crc{2}&4\\3\\5\\\crc{6}\\7}}.
\]

A possibility for Step 5b is
\[
\begin{array}{rcl}
w  &=& \mathbf{6}\ \mathbf{3}\ \crc{2}\ \mathbf{1}\ \mathbf{\crc{5}}\ 4\ \mathbf{\crc{7}} \\
v  &=& \mathbf{7}\ \mathbf{6}\ \crc{2}\ \mathbf{3}\ \mathbf{1}\       4\ \mathbf{\crc{5}} \\
w' &=& \hphantom{\mathbf{6}}\ \mathbf{3}\ \crc{2}\ \mathbf{1}\ \mathbf{\crc{5}}\ 4\ \mathbf{\crc{6}}\ \\
v' &=& \hphantom{\mathbf{6}}\ \mathbf{6}\ \crc{2}\ \mathbf{3}\ \mathbf{1}\ 4\      \mathbf{\crc{5}}.\
\end{array}
\]
For this example, $\tau = \tau(w) = 5$, $\eta = \crc{7}$, $x = 7$, $\eta' = 6$, $y = 6$, and
\[
P_0^w =
{\tiny\tableau{1&\crc{2}&4\\3\\\crc{5}}}\ \
\quad
P_\mix(w') =
{\tiny\tableau{1&\crc{2}&4\\3\\\crc{5}\\\crc{6}}}\
\quad
P_\mix(v') =
{\tiny\tableau{1&\crc{2}&4\\3\\\crc{5}\\6}}\ \ \
\quad
P_\mix(w) =
{\tiny\tableau{1&\crc{2}&4\\3\\\crc{5}\\6\\\crc{7}}}\
\quad
P_\mix(v) =
{\tiny\tableau{1&\crc{2}&4\\3\\\crc{5}\\6\\7}}.
\]
\end{example}

\subsection{Completing the proof of Hook Kronecker Rule I}
\label{ss completing the proof}

Recall that  $\oplus$ denotes concatenation of tableaux and  $\ke$ denotes plactic equivalence (see \textsection\ref{ss plactic equivalence}).
In this subsection we prove
\begin{theorem} \label{t plain commutes with pi-}
For any color lowerable word $w$, $w^\plain \ke \pi_-(w)^\plain$.
\end{theorem}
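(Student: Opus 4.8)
The plan is to prove the statement directly at the level of ordinary words, by writing $w^\plain$ and $\pi_-(w)^\plain$ explicitly and reconciling them with the Knuth relations. First I would reduce to the case that $w$ is a colored permutation. Since $\plain$ commutes with standardization (Proposition~\ref{p standardization commutes}) and so does $\pi_-$ (Proposition~\ref{p standardization hook subwords}(f)), we have $(w^\plain)^\stand = w^{\stand\,\plain}$ and $(\pi_-(w)^\plain)^\stand = \pi_-(w^\stand)^\plain$; as $w^\plain$ and $\pi_-(w)^\plain$ have the same content, a plactic equivalence in the permutation case descends to the general case via $P(y)^\stand = P(y^\stand)$.

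So assume $w$ is a colored permutation and split it along its rightmost special subword $w_\mathbf{k} = x_1\cdots x_m\,\crc{x}_{m+1}\cdots\crc{x}_\tau$, where $x_1>\cdots>x_m$, $\crc{x}_{m+1}\le\cdots\le\crc{x}_\tau$, and $\crc{x}_\tau = \eta = \sw(w) = \crc{x}$ is barred because $w$ is color lowerable (Proposition~\ref{p special subwords basics}(ii),(iii)). Writing $w = w^{(0)}\,w_{k_1}\,w^{(1)}\,w_{k_2}\cdots w_{k_\tau}\,w^{(\tau)}$ and setting $B_i := \sub_\crcempty(w^{(i)})$, $A_i := \sub_\varnothing(w^{(i)})$, one reads off
\[ w^\plain = B_0^*\cdots B_m^*\,x_{m+1}\,B_{m+1}^*\,x_{m+2}\cdots B_{\tau-1}^*\,x_\tau\,B_\tau^*\cdot A_0\,x_1\,A_1\cdots x_m\,A_m\,A_{m+1}\cdots A_\tau, \]
\[ \pi_-(w)^\plain = B_0^*\cdots B_m^*\,B_{m+1}^*\,x_{m+1}\,B_{m+2}^*\,x_{m+2}\cdots x_{\tau-1}\,B_\tau^*\cdot A_0\,x_\tau\,A_1\,x_1\cdots A_m\,x_m\,A_{m+1}\cdots A_\tau. \]
These share the prefix $B_0^*\cdots B_m^*$ and the suffix $A_{m+1}\cdots A_\tau$, so it suffices to prove that the middle factors are Knuth equivalent; the net change there is that the letter $x_\tau = \crc{x}^*$ has migrated from the barred block to the front of the unbarred block, each $x_j$ with $j\le m$ has moved one block to the right inside the unbarred part, and each $x_i$ with $m<i<\tau$ has moved one block to the right inside the barred part.

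The structural input I would use to legalize these moves: in the natural order every letter of $w_\mathbf{k}$ is $\le\eta = \crc{x}$, which forces $x_\tau > x_1 > \cdots > x_m$ and $\crc{x}_{m+1}\le\cdots\le\crc{x}_\tau$; and because $w_\mathbf{k}$ is the \emph{rightmost} special subword, Proposition~\ref{p lexicographic eq componentwise} applied to $w^{\stand\,\bneg}$ rules out, for $m<i<\tau$, any barred letter of $w^{(i)}$ whose value lies in $[\crc{x}_i,\crc{x}_{i+1}]$ — splicing such a letter into $w_\mathbf{k}$ would yield a special subword strictly further right or further left, a contradiction — while Proposition~\ref{p special subwords basics}(iii)--(vi) controls the occurrences of $\eta^* = x_\tau$. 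These are precisely the facts that make each slide of a special letter past the adjacent block $B_i^*$ (resp.\ $A_i$) a consequence of the Knuth relations, and they are the reason the rightmost (rather than an arbitrary) special subword has to be rotated.

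The verification that the middle factors are Knuth equivalent is the step I expect to be the main obstacle. I would carry it out by induction — on the length of $w$ together with $\tau-m$ — stripping off one letter at a time exactly as in the reduction Steps~1--5 of the proof of Theorem~\ref{t main thm words}: assume $\crc{x}$ is the largest letter of $w$ and sits at the end, peel off one block $B_i^*$ or $A_i$ past the adjacent special letter using the structural constraints above, and apply the inductive hypothesis to the shorter word, keeping careful track of how $\plain$ interacts with the deletions and slides. Once $w^\plain\ke\pi_-(w)^\plain$ is established, combining Proposition~\ref{p plain basics}(i) with Theorem~\ref{t main thm words} gives $P_\mix(w)^\plain = P_\mix(\pi_-(w))^\plain = C_-(P_\mix(w))^\plain$, hence the tableau identity $T^\plain = C_-(T)^\plain$ of Theorem~\ref{t plain commutes with color lowering}, completing the proof of Hook Kronecker Rule~I.
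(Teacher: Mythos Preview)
Your decomposition of $w^\plain$ and $\pi_-(w)^\plain$ into common prefix, common suffix, and differing middle is exactly how the paper begins, and your reduction to colored permutations is correct. But the proposed verification of the middle plactic equivalence has a gap: the assertion that ``each slide of a special letter past the adjacent block $B_i^*$ (resp.\ $A_i$) is a consequence of the Knuth relations'' is not true as stated. For instance $x_{m+1}\,B_{m+1}^* \ke B_{m+1}^*\,x_{m+1}$ would mean these words have the same insertion tableau, which fails already for $x_{m+1}=1$, $B_{m+1}^*=2$. The constraint you extract from the rightmost-special condition (no barred letter of $w^{(i)}$ with value in $[\crc{x}_i,\crc{x}_{i+1}]$) is correct but not strong enough to make such local commutations valid; the plactic equivalence of the middle factors is genuinely global. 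Your fallback plan of replaying Steps~1--5 of Theorem~\ref{t main thm words} ``at the level of plain words'' is not fleshed out, and it is not clear how removing or appending letters to $w$ interacts with $\plain$ in a way that matches those steps.

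The paper avoids this by using Theorem~\ref{t main thm words} (already proved) rather than re-running its induction. Write $i$ for the index where the special subword turns barred, and split $w$ at position $k_i$ into $w_L,w_R$ and $v=\pi_-(w)$ into $v_L,v_R$. The key observation (Lemma~\ref{l plain commutes with pi-}) is that the shorter word $\sub_\varnothing(w_L)\,\crc{x}$ is itself color lowerable with $\pi_-\big(\sub_\varnothing(w_L)\,\crc{x}\big)=\sub_\varnothing(v_L)$, and dually $\sub_\crcempty(w_R)=\pi_+\big(x\,\sub_\crcempty(v_R)\big)$. Applying Theorem~\ref{t main thm words} to each of these (each has at most one letter of the ``other'' color, so mixed insertion reduces to ordinary insertion plus one extra cell) gives directly
\[
{\tiny\tableau{x}}\oplus P\big(\sub_\varnothing(w_L)\big)\ \ke\ P\big(\sub_\varnothing(v_L)\big),
\qquad
P\big(\sub_\crcempty(w_R)^*\big)\ \ke\ P\big(\sub_\crcempty(v_R)^*\big)\oplus{\tiny\tableau{x}}.
\]
These are exactly the two halves of your middle factor, and concatenating them (with the shared $x$ cancelling) yields $w^\plain\ke v^\plain$ in one line. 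So the missing idea is not more structural constraints on the blocks, but recognizing that the unbarred half and the barred half of the middle are \emph{themselves} instances of $\pi_\pm$ on simpler words, and invoking Theorem~\ref{t main thm words} on those.
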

This, together with Theorem \ref{t main thm words} and Proposition \ref{p plain basics} (i), proves Theorem \ref{t plain commutes with color lowering}.

We give a lemma and then proceed with the proof of Theorem \ref{t plain commutes with pi-}. These are heavy in notation, so it is helpful to follow along with Example \ref{ex plain commutes with pi-}.
\begin{lemma} \label{l plain commutes with pi-}
Suppose $w$ is a color lowerable word and $\mathbf{k} = k_1 \cdots k_\tau$ is the place word of its rightmost special subword. Set $v = \pi_-(w)$ and  $\crc{x} = \sw(w)$. Let $i$ be such that $w_{k_i}$ is the leftmost barred letter of $w_\mathbf{k}$. Set $w_L = w_1 w_2 \cdots w_{k_i-1}$, $w_R = w_{k_i} w_{k_i+1} \cdots w_n$, $v_L = v_1 v_2 \cdots v_{k_i}$, and $v_R = v_{k_i+1} v_{k_i+2} \cdots v_n$. Then
\begin{align}
\label{el pi- left half}
\pi_-\big(\sub_\varnothing(w_L) \ \crc{x}\big) &= \sub_\varnothing(v_L),  \\
\label{el pi+ right half}
\sub_\crcempty(w_R) &= \pi_+\big(x \ \sub_\crcempty(v_R)\big).
\end{align}
Moreover,
\begin{align}
 \label{el ptab equality1}
{\tiny\tableau{x}} \oplus P\big(\sub_\varnothing(w_L)\big) &\ke P\big(\sub_\varnothing(v_L)\big), \\
 \label{el ptab equality2}
P\big(\sub_\crcempty(w_R)^*\big) &\ke P\big(\sub_\crcempty(v_R)^*\big) \oplus {\tiny\tableau{x}}.
\end{align}
\end{lemma}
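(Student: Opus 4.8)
The plan is to reduce everything to a combinatorial analysis of the rotation $\pi_\mathbf{k}$ together with one application of Theorem \ref{t main thm words}. First I would pass to the case that $w$ is a colored permutation, since $\sub_\varnothing$, $\sub_\crcempty$, $\pi_\pm$, $P(\cdot)$, $\ke$ and the notion of (rightmost) special subword all interact with standardization via Propositions \ref{p standardization commutes} and \ref{p standardization hook subwords}. Then I would write down explicitly what the rotation does: the unbarred letters of $w_\mathbf{k}$ are $w_{k_1}>\cdots>w_{k_{i-1}}$, all at positions $<k_i$, hence inside $w_L$, and the barred ones are $w_{k_i}\le\cdots\le w_{k_\tau}=\crc{x}$, all at positions $\ge k_i$, hence inside $w_R$; so $v=\pi_\mathbf{k}(w)$ has $x$ at position $k_1$, has $w_{k_{j-1}}$ at position $k_j$ for $2\le j\le\tau$, and is unchanged elsewhere. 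In particular positions $k_1,\dots,k_i$ of $v$ are unbarred (the last of them, $w_{k_{i-1}}$, landing at position $k_i$, which is the last position of $v_L$) and positions $k_{i+1},\dots,k_\tau$ of $v$ are barred.

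For \eqref{el pi- left half}, set $u=\sub_\varnothing(w_L)\,\crc{x}$. Every decreasing hook subword of $u$ is a decreasing subword of $\sub_\varnothing(w_L)$ optionally followed by $\crc{x}$, so $\tau(u)=1+(\text{length of the longest decreasing subword of }\sub_\varnothing(w_L))$; that length is exactly $i-1$, because anything longer, prepended to the barred tail $w_{k_i}\cdots w_{k_\tau}$ of $w_\mathbf{k}$ (all at positions $<k_i$), would give a decreasing hook subword of $w$ of length $\tau+1$. The same argument, combined with Proposition \ref{p lexicographic eq componentwise}, shows $w_{k_1},\dots,w_{k_{i-1}}$ is the rightmost length‑$(i-1)$ decreasing subword of $\sub_\varnothing(w_L)$ (a righter one would, after appending the barred tail, give a special subword of $w$ righter than $w_\mathbf{k}$); also $\sw(u)=\eta_u=\crc{x}$ by Proposition \ref{p special subwords basics}(ii), since $w_{k_1}\cdots w_{k_{i-1}}\crc{x}$ already uses letters $\le\crc{x}$ while $\sub_{<\crc{x}}(u)$ has no barred letter. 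Hence the rightmost special subword of $u$ is $w_{k_1}\cdots w_{k_{i-1}}\crc{x}$, and computing $\pi_-(u)$ from the rotation formula — the background unbarred letters of $w_L$ are untouched, $x$ takes $w_{k_1}$'s slot, $w_{k_{j-1}}$ takes $w_{k_j}$'s slot, and $w_{k_{i-1}}$ is appended at the end — matches $\sub_\varnothing(v_L)$ letter by letter.

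For \eqref{el ptab equality1}, \eqref{el pi- left half} and Theorem \ref{t main thm words} give $C_-(P_\mix(u))=P_\mix(\pi_-(u))=P_\mix(\sub_\varnothing(v_L))=P(\sub_\varnothing(v_L))$. The last step in computing $P_\mix(u)$ is mixed inserting the single barred letter $\crc{x}$ into $S:=P(\sub_\varnothing(w_L))$, and this must add a new cell at the bottom of column $1$: if $\crc{x}$ bumped inside column $1$ it would sit above some unbarred letter that remained the bottom cell of column $1$, contradicting $\sw(u)=\crc{x}$. Thus $C_-(P_\mix(u))=S\sqcup{\tiny \tableau{x}}$, a straight‑shape semistandard tableau whose row reading word is $\ke x\,\sub_\varnothing(w_L)$, so it equals $P(x\,\sub_\varnothing(w_L))$; comparing, $P(x\,\sub_\varnothing(w_L))=P(\sub_\varnothing(v_L))$, and since ${\tiny \tableau{x}}\oplus P(\sub_\varnothing(w_L))$ has the same row reading word $x\,\sub_\varnothing(w_L)$ we conclude ${\tiny \tableau{x}}\oplus P(\sub_\varnothing(w_L))\ke P(\sub_\varnothing(v_L))$. (Via Proposition \ref{p plain basics}(i) this is, incidentally, Theorem \ref{t plain commutes with color lowering} for a tableau with a single barred letter, so there is no circularity.) Statements \eqref{el pi+ right half} and \eqref{el ptab equality2} I would obtain by the mirror argument with $\sub_\crcempty(w_R)$ and $z:=x\,\sub_\crcempty(v_R)$ in place of $\sub_\varnothing(w_L)$ and $u$, using dual mixed insertion (Proposition \ref{p dual mixed}), Proposition \ref{p rev star mixed}(i) to pass between barred tableaux and their transposes, and the analogous fact — forced by $\sw(z)=x$ — that dual mixed inserting $x$ into $P_\mix(\sub_\crcempty(v_R))$ adds a cell at the bottom of column $1$; alternatively, once $w$ is a colored permutation, \eqref{el pi+ right half} and \eqref{el ptab equality2} are exactly \eqref{el pi- left half} and \eqref{el ptab equality1} applied to the color lowerable permutation $(\pi_-(w))^{\rev\,*}$, using that $^{\rev\,*}$ swaps $\pi_+\leftrightarrow\pi_-$, swaps $\sub_\varnothing\leftrightarrow\sub_\crcempty$, swaps leftmost and rightmost special subwords, and that $\pi_+\pi_-=\mathrm{id}$ (Corollary \ref{t pi bijection}), together with Proposition \ref{p rev star mixed}.

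The main obstacle is the positional bookkeeping: tracking which letter of $w$ lands at which position of $v=\pi_\mathbf{k}(w)$, how places within $w_L$ (resp. $w_R$) relabel places within $\sub_\varnothing(w_L)$ (resp. $\sub_\crcempty(w_R)$), and confirming that the rotated and relabeled words literally coincide — \eqref{el pi- left half} and \eqref{el pi+ right half} are equalities of words, with no slack. The secondary technical point is the two "adds a cell at the bottom of column $1$" claims, which I would deduce from the identification $\sw=\eta$ in Proposition \ref{p special subwords basics}(ii) rather than by chasing bumping paths.
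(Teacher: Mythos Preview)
Your proposal is correct and follows essentially the same approach as the paper's proof: reduce to a colored permutation, identify $w_{k_1}\cdots w_{k_{i-1}}\crc{x}$ as the rightmost special subword of $u=\sub_\varnothing(w_L)\,\crc{x}$, apply Theorem~\ref{t main thm words}, and observe that the mixed insertion of $\crc{x}$ into $P(\sub_\varnothing(w_L))$ simply appends a new bottom row consisting of the single cell $\crc{x}$. The paper justifies this last point by noting that every maximum-length decreasing hook subword of $u$ must contain $\crc{x}$, whereas you argue it from $\sw(u)=\crc{x}$ and a short bumping analysis; both are fine. Your alternative derivation of \eqref{el pi+ right half}--\eqref{el ptab equality2} via the involution $^{\rev\,*}$ (using Corollary~\ref{t pi bijection}) is a pleasant shortcut not in the paper, which instead repeats the direct argument and then applies Proposition~\ref{p rev star mixed}(i); either route works.
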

\begin{proof}
By Propositions \ref{p standardization commutes} and \ref{p standardization hook subwords}, we can assume that $w$ is a colored permutation.
Set  $w' = \sub_\varnothing(w_L) \ \crc{x}$ and let $\mathbf{k}'$ be the place word of  $w'$ such that  $(w')_{\mathbf{k}'} = w_{k_1} \cdots w_{k_{i-1}} \crc{x}$ (this determines $\mathbf{k}'$ uniquely since we are assuming $w$ is a colored permutation).  One checks directly from the definitions that  $\pi_{\mathbf{k}'}(w') = \sub_\varnothing(v_L)$.

Note that every decreasing hook subword of  $w'$ of maximum possible length contains $ \crc{x}$.  It is then not hard to show that $w_\mathbf{k}$ being the rightmost special subword of $w$ implies $(w')_{\mathbf{k}'}$ is the rightmost special subword of $w'$.  This proves \eqref{el pi- left half}.  The proof of \eqref{el pi+ right half} is similar.

Theorem \ref{t main thm words} and \eqref{el pi- left half} imply $C_-(P_\mix(w')) = P_\mix(\sub_\varnothing(v_L))$.
It follows from the note of the previous paragraph that the last row of $P_\mix(w')$ consists of a single cell containing $ \crc{x}$.  Moreover,
$P_\mix\big(\sub_\varnothing(w_L)\big) = P\big(\sub_\varnothing(w_L)\big)$ and $P_\mix\big(\sub_\varnothing(v_L)\big) = P\big(\sub_\varnothing(v_L)\big)$ since these words consist of only unbarred letters.  These facts yield \eqref{el ptab equality1}.

A similar argument to the previous paragraph using \eqref{el pi+ right half} in place of \eqref{el pi- left half} yields
\[
P_\mix\big(\sub_\crcempty(w_R)\big) = P_\mix\big(\sub_\crcempty(v_R)\big) \sqcup {\tiny\tableau{\crc{x}}}_{(\tau-i+1,1)}.
\]
The plactic equivalence \eqref{el ptab equality2} then follows from $P_\mix\big(\sub_\crcempty(w_R)\big)^* = P_\mix\big(\sub_\crcempty(w_R)^*\big) = P\big(\sub_\crcempty(w_R)^*\big)$ and $\left(P_\mix\big(\sub_\crcempty(v_R)\big) \sqcup\, {\tiny\tableau{\crc{x}}}_{(\tau-i+1,1)}\right)^* = P_\mix\big(\sub_\crcempty(v_R)^*\big) \sqcup\, {\tiny\tableau{x}}_{(1,\tau-i+1)} \ke P\big(\sub_\crcempty(v_R)^*\big) \oplus {\tiny\tableau{x}}$ (here we have used Proposition \ref{p rev star mixed} (i)).
\end{proof}

\begin{proof}[Proof of Theorem \ref{t plain commutes with pi-}]
Maintain the notation of Lemma \ref{l plain commutes with pi-}. We compute
\[
\begin{array}{lcl}
w^\plain &=& \sub_\crcempty(w_L)^* \,\sub_\crcempty(w_R)^* \,\sub_\varnothing(w_L) \,\sub_\varnothing(w_R), \\
v^\plain &=& \sub_\crcempty(v_L)^* \,\sub_\crcempty(v_R)^* \,\sub_\varnothing(v_L) \,\sub_\varnothing(v_R).
\end{array}
\]
By Lemma \ref{l plain commutes with pi-}, we have
\begin{align}
\sub_\crcempty(w_R)^* \,\sub_\varnothing(w_L)
&\ke P\big(\sub_\crcempty(w_R)^*\big) \oplus P\big(\sub_\varnothing(w_L)\big) \notag \\
&\ke P\big(\sub_\crcempty(v_R)^*\big) \oplus {\tiny\tableau{x}} \oplus P\big(\sub_\varnothing(w_L)\big)
\label{e plain ptab} \\
&\ke P\big(\sub_\crcempty(v_R)^*\big) \oplus P\big(\sub_\varnothing(v_L)\big) \notag \\
&\ke \sub_\crcempty(v_R)^* \,\sub_\varnothing(v_L). \notag
\end{align}
This proves the theorem since $\sub_\crcempty(w_L)^* = \sub_\crcempty(v_L)^*$ and $\sub_\varnothing(w_R) = \sub_\varnothing(v_R)$.
\end{proof}

\begin{example}
\label{ex plain commutes with pi-}
Let us illustrate the proofs of Lemma \ref{l plain commutes with pi-} and Theorem \ref{t plain commutes with pi-} for the following choice of $w$:
\[
\begin{array}{cc@{\ \ \ \ }l}
w & = &\underbracket[0.3pt][1pt]{\mathbf{4}\ 1\ \crc{2}\ \mathbf{3}\ 6\ \mathbf{2}\ 3\ \crc{2}}_{w_L}\ \underbracket[0.3pt][1pt]{\mathbf{\crc{1}}\ \mathbf{\crc{1}}\ 1\ 3\ \crc{2}\ 3\ \mathbf{\crc{1}}\ \mathbf{\crc{4}}\ \mathbf{\crc{5}}\ 1\ \crc{1}\ 2}_{w_R}\ \\[5mm]
v & = &\underbracket[0.3pt][1pt]{\mathbf{5}\ 1\ \crc{2}\ \mathbf{4}\ 6\ \mathbf{3}\ 3\ \crc{2}\ \mathbf{2}}_{v_L}\ \underbracket[0.3pt][1pt]{ \mathbf{\crc{1}}\ 1\ 3\ \crc{2}\ 3\ \mathbf{\crc{1}}\ \mathbf{\crc{1}}\ \mathbf{\crc{4}}\ 1\ \crc{1}\ 2}_{v_R}\ \\[5mm]
w^\plain & = & \underbracket[0.3pt][1pt]{2\ 2}_{\mathclap{\sub_\crcempty(w_L)^*}}\ \underbracket[0.3pt][1pt]{\mathbf{1}\ \mathbf{1}\ 2\ \mathbf{1}\ \mathbf{4}\ \mathbf{5}\ 1}_{\sub_\crcempty(w_R)^*}\ \underbracket[0.3pt][1pt]{\mathbf{4}\ 1\ \mathbf{3}\ 6\ \mathbf{2}\ 3}_{\sub_\varnothing(w_L)}\ \hphantom{\mathbf{2}}\underbracket[0.3pt][1pt]{1\ 3\ 3\ 1\ 2}_{\sub_\varnothing(w_R)}\ \\[2mm]
v^\plain & = & \underbracket[0.3pt][1pt]{2\ 2}_{\mathclap{\sub_\crcempty(v_L)^*}}\ \ \ \,\underbracket[0.3pt][1pt]{\mathbf{1}\ 2\ \mathbf{1}\ \mathbf{1}\ \mathbf{4}\ 1}_{\sub_\crcempty(v_R)^*}\ \underbracket[0.3pt][1pt]{\mathbf{5}\ 1\ \mathbf{4}\ 6\ \mathbf{3}\ 3\ \mathbf{2}}_{\sub_\varnothing(v_L)}\ \underbracket[0.3pt][1pt]{1\ 3\ 3\ 1\ 2}_{\sub_\varnothing(v_R)}. \\
\end{array}
\]
The rightmost (resp. leftmost) special subword of $w$ (resp.  $v$) and the corresponding letters of $w^\plain$ (resp. $v^\plain$) are in bold.

We have
\[
\begin{array}{lcl}
\sub_\varnothing(w_L) \ \crc{x}& = & \mathbf{4}\ 1\ \mathbf{3}\ 6\ \mathbf{2}\ 3\ \mathbf{\crc{5}}\\
\sub_\varnothing(v_L)& = & \mathbf{5}\ 1\ \mathbf{4}\ 6\ \mathbf{3}\ 3\ \mathbf{2} \\[2mm]
\sub_\crcempty(w_R)        & = & \mathbf{\crc{1}}\ \mathbf{\crc{1}}\ \crc{2}\ \mathbf{\crc{1}}\ \mathbf{\crc{4}}\ \mathbf{\crc{5}}\ \crc{1}\\
x \ \sub_\crcempty(v_R) & = & \mathbf{5}\ \mathbf{\crc{1}}\ \crc{2}\ \mathbf{\crc{1}}\ \mathbf{\crc{1}}\ \mathbf{\crc{4}}\ \crc{1}.
\end{array}
\]
The rightmost (resp.  leftmost) special subwords are shown in bold in the first and third (resp. second and fourth) lines,
so \eqref{el pi- left half} and \eqref{el pi+ right half} are evident for this example.  The plactic equivalences \eqref{el ptab equality1} and \eqref{el ptab equality2} become
\[
{\tiny\tableau{5}} \oplus P\big({4}\ 1\ {3}\ 6\ {2}\ 3\big) \ke {\tiny\tableau{1&2&3\\3&6\\4\\5}} = P\big({5}\ 1\ {4}\ 6\ {3}\ 3\ {2} \big)
\]
\[
P\big(1\ 1\ 2\ 1\ 4\ 5 \ 1\big) = {\tiny\tableau{1&1&1&1&5\\2&4}} \ke P\big(1 \ 2\ 1 \ 1\ 4\ 1\big) \oplus {\tiny\tableau{5}}.
\]
Finally, \eqref{e plain ptab} becomes
\[
\sub_\crcempty(w_R)^* \,\sub_\varnothing(w_L) \ke
{\tiny\tableau{1&1&1&1&5\\2&4}}
\,\oplus\,
{\tiny\tableau{1&2&3\\3&6\\4}}
\ke
{\tiny\tableau{1&1&1&1\\2&4}}
\,\oplus\,
{\tiny\tableau{1&2&3\\3&6\\4\\5}}
\ke
\sub_\crcempty(v_R)^* \,\sub_\varnothing(v_L).
\]
%
\end{example}

\section{More hook Kronecker rules and their symmetries} \label{s more versions and symmetries of the Hook Kronecker Rule}
Here we give two variants of Hook Kronecker Rule I (\textsection\ref{ss more Kronecker rules}) and also show that this rule holds when $\nu$ is a skew shape (\textsection\ref{ss generalization to skew shapes}).
We show that the ``symmetry'' $\gcoef= g_{\lambda' \, \mu(d)' \, \nu}$ of Kronecker coefficients is evident from the hook Kronecker rules, while the symmetry $\gcoef= g_{\nu\, \mu(d)\, \lambda}$ does not seem to be (\textsection\ref{ss symmetries of the hook Kronecker rules}). Finally, we compare the hook Kronecker rules to the experiment in the introduction and to Lascoux's Kronecker Rule (\textsection\ref{ss comparison with Lascoux}).

\subsection{Hook Kronecker Rules I--III}
\label{ss more Kronecker rules}
Let $\lambda$ and $\nu$ be partitions of $n$ and let $A_\lambda$, $B_\nu$ be SYT of shapes $\lambda$, $\nu$, respectively. Define the following subsets of standard colored tableaux of size $n$:
\[
\begin{array}{lcl}
\text{CT}_{A_\lambda} &:=& \big\{T: T^\plain = A_\lambda\big\}, \\[1mm]
\text{CT}_{A_\lambda, d} &:=& \big\{T: T^\plain = A_\lambda,\ \tc(T) = d\big\}, \\[1mm]
\text{CT}_{A_\lambda, d}(\nu) &:=& \big\{T: T^\plain = A_\lambda,\ \tc(T) = d, \ \sh(T) = \nu \big\}.
\end{array}
\]
Define the following subsets of colored permutations of length $n$:
\[
\begin{array}{lcl}
\text{CW}_{A_\lambda} &:=& \big\{w: P(w^\plain) = A_\lambda\big\}, \\[1mm]
\text{CW}_{A_\lambda, d} &:=& \big\{w: P(w^\plain) = A_\lambda,\ \tc(w) = d\big\}, \\[1mm]
\text{CW}_{A_\lambda, d, B_\nu} &:=& \big\{w: P(w^\plain) = A_\lambda,\ \tc(w) = d,\ Q\nmix(w) = B_\nu\big\}.
\end{array}
\]
Further, define $\text{CT}^-_{A_\lambda}$ (resp. $\text{CT}^+_{A_\lambda}$) to be the subset of $\text{CT}^-_{A_\lambda}$ (resp. $\text{CT}^+_{A_\lambda}$) consisting of color raisable  (resp. lowerable) tableaux.  Define $\text{CT}^-_{A_\lambda, d}$, $\text{CW}^-_{A_\lambda}$, etc. similarly (for the sets of words, intersect with color raisable or lowerable words instead of tableaux).


\begin{corollary}[Hook Kronecker Rules I--III]\label{c kronecker words}
Let  $\lambda$ and $\nu$ be partitions of  $n$ and recall $\mu(d) = (n-d,1^d)$. Let $A_\lambda$ and $B_\nu$ be any SYT of shapes $\lambda$ and  $\nu$, respectively, as above.
The following sets of combinatorial objects have cardinality equal to the Kronecker coefficient $\gcoef$:
\begin{list}{\emph{(\Roman{ctr})}}{\usecounter{ctr} \setlength{\itemsep}{2pt} \setlength{\topsep}{3pt}}
\item $\text{CYT}^{\,-}_{\lambda, d}(\nu)$
\item  $\text{CT}^{\,-}_{A_\lambda, d}(\nu)$
\item  $\text{CW}^{\,-}_{A_\lambda, d, B_\nu}$.
\end{list}
\end{corollary}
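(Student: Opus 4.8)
The plan is to deduce all three statements from Hook Kronecker Rule I (Theorem \ref{t main thm tableaux}), which already establishes that $|\text{CYT}^{\,-}_{\lambda, d}(\nu)| = \gcoef$, i.e. statement (I). The only work is to produce cardinality-preserving bijections $\text{CYT}^{\,-}_{\lambda, d}(\nu) \to \text{CT}^{\,-}_{A_\lambda, d}(\nu)$ and $\text{CT}^{\,-}_{A_\lambda, d}(\nu) \to \text{CW}^{\,-}_{A_\lambda, d, B_\nu}$, for any chosen SYT $A_\lambda$ and $B_\nu$. Both bijections should come from standardizing the relevant tableaux and then transporting along ordinary jeu-de-taquin/$W$-graph style maps on SYT, using that all the operators in play commute with standardization (Propositions \ref{p standardization commutes}, \ref{p standardization hook subwords}) and that ${}^\plain$, $\tc$, $\sh$, and ``unbarred southwest corner'' are all compatible with standardization.

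First I would handle (I) $\leftrightarrow$ (II). Recall $\text{CYT}_{\lambda,d}(\nu) = \{T \in \text{CT} : T^\plain = Z_\lambda,\ \tc(T) = d,\ \sh(T)=\nu\}$, while $\text{CT}_{A_\lambda,d}(\nu)$ replaces $Z_\lambda$ by an arbitrary SYT $A_\lambda$ of shape $\lambda$ and asks $T$ to be a standard colored tableau. The key point is that $T \in \text{CYT}_{\lambda,d}(\nu)$ has $T^\plain = Z_\lambda$ with $T$ of content $\lambda$, and $T^{\stand}$ is a standard colored tableau with $(T^{\stand})^\plain = (T^\plain)^{\stand} = Z_\lambda^{\stand}$ (using ${}^{\plain\;\stand} = {}^{\stand\;\plain}$ from Proposition \ref{p standardization commutes}); conversely a standard colored tableau $T$ with $T^\plain = A_\lambda$ destandardizes (replace the entries of $\sub_\crcempty(T)^*$ and $\sub_\varnothing(T)$ by the contents dictated by $A_\lambda$, which is possible precisely because $A_\lambda$ is standard of shape $\lambda$) to an element of $\text{CYT}_{\lambda,d}(\nu)$. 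More cleanly: for each fixed SYT $A_\lambda$ of shape $\lambda$, destandardization in the plactic monoid along $A_\lambda \mapsto Z_\lambda$ (equivalently, $P^\prec_\mix$ composed with the jeu-de-taquin rectification that turns the unbarred part into $Z_\lambda$) gives a bijection $\text{CT}_{A_\lambda,d}(\nu) \xrightarrow{\cong} \text{CYT}_{\lambda,d}(\nu)$ preserving $\tc$ and $\sh$, and it preserves ``southwest corner barred or not'' because that cell's letter is untouched by the ${}^\plain$-level relabelling. Restricting to color raisable tableaux then gives $|\text{CT}^{\,-}_{A_\lambda,d}(\nu)| = |\text{CYT}^{\,-}_{\lambda,d}(\nu)| = \gcoef$.

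Next, (II) $\leftrightarrow$ (III). Here the map is $w \mapsto P_\mix(w)$ from $\text{CW}_{A_\lambda,d,B_\nu}$ to $\text{CT}_{A_\lambda,d}(\nu)$: by Proposition \ref{p plain basics} (i), $P_\mix(w)^\plain = P(w^\plain) = A_\lambda$ when $w \in \text{CW}_{A_\lambda,d,B_\nu}$, and $\tc(P_\mix(w)) = \tc(w) = d$, $\sh(P_\mix(w)) = \sh(Q_\mix(w)) = \nu$. Since mixed insertion $w \mapsto (P_\mix(w), Q_\mix(w))$ is a bijection from colored permutations of length $n$ onto pairs (standard colored tableau, SYT) of the same shape, fixing the recording tableau to be $B_\nu$ makes $w \mapsto P_\mix(w)$ a bijection $\text{CW}_{A_\lambda,d,B_\nu} \xrightarrow{\cong} \text{CT}_{A_\lambda,d}(\nu)$. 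It preserves total color and shape by construction; and $w$ is color raisable (resp. lowerable) iff $\sw(w) = \sw(P_\mix(w))$ is unbarred (resp. barred), which is exactly the definition of $P_\mix(w)$ being color raisable (resp. lowerable). Hence this bijection restricts to $\text{CW}^{\,-}_{A_\lambda,d,B_\nu} \xrightarrow{\cong} \text{CT}^{\,-}_{A_\lambda,d}(\nu)$, giving $|\text{CW}^{\,-}_{A_\lambda,d,B_\nu}| = |\text{CT}^{\,-}_{A_\lambda,d}(\nu)| = \gcoef$.

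The main obstacle is the first bijection: one must check carefully that changing $Z_\lambda$ to an arbitrary SYT $A_\lambda$ of shape $\lambda$ genuinely gives a well-defined bijection and does not disturb the barred/unbarred status of the southwest cell. The cleanest route is to pass everything through the order $\prec$ (where, as in the proof of Proposition \ref{p arm leg} (D), $\text{CYT}_{\lambda,d}(\nu)$ is identified with a union of Littlewood--Richardson tableaux) and to observe that the standard-colored version $\text{CT}_{A_\lambda,d}(\nu)$ corresponds, under $T \mapsto \convert{< \to \prec}{T}$, to standard skew tableaux whose rectification is $A_\lambda$; standardization and destandardization of Littlewood--Richardson fillings is a standard and well-behaved operation, and the southwest cell of $T$ is never an inner corner of $\sub_\crcempty$, so its letter is literally carried along unchanged. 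Once that compatibility is nailed down, everything else is bookkeeping via the already-proved commutations with standardization.
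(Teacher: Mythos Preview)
Your treatment of (I) and of the passage (II)$\,\to\,$(III) is fine and matches the paper: mixed insertion $w\mapsto (P_\mix(w),Q_\mix(w))$ is a bijection from colored permutations to pairs (standard colored tableau, SYT) of the same shape, so fixing $Q_\mix(w)=B_\nu$ gives $P_\mix:\text{CW}_{A_\lambda,d,B_\nu}\xrightarrow{\cong}\text{CT}_{A_\lambda,d}(\nu)$, and $w$ is color raisable iff $P_\mix(w)$ is.

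The gap is in your (I)$\,\to\,$(II) step for \emph{arbitrary} $A_\lambda$. Standardization does give a bijection $\text{CYT}_{\lambda,d}(\nu)\to\text{CT}_{Z_\lambda^{\stand},d}(\nu)$ that preserves the southwest corner (Proposition~\ref{p standardization hook subwords}(a)), so (II) follows for the single choice $A_\lambda=Z_\lambda^{\stand}$. But to pass to a general $A_\lambda$ you propose a ``destandardization along $A_\lambda\mapsto Z_\lambda$'' or a jeu-de-taquin modification in the $\prec$ order, and then assert that the southwest cell ``is literally carried along unchanged.'' This is exactly the step that fails to be obvious: the southwest entry of $T$ is read in the $<$ order, whereas any natural bijection changing $T^\plain$ from one SYT of shape $\lambda$ to another operates on $\convert{<\to\prec}{T}$, and Remark~\ref{r natural and small bar order} explicitly warns that the southwest-corner condition is \emph{difficult} to see after converting to $\prec$. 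You have not exhibited a concrete map, and the sentence about inner corners does not justify the claim.

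The paper avoids this entirely by a different route: since Theorem~\ref{t plain commutes with color lowering} says $T^\plain=C_-(T)^\plain$ for \emph{every} color lowerable tableau (not only Yamanouchi ones), $C_-$ restricts to a bijection $\text{CT}^{\,+}_{A_\lambda,d+1}(\nu)\to\text{CT}^{\,-}_{A_\lambda,d}(\nu)$. One then reruns Proposition~\ref{p arm leg} with $A_\lambda$ in place of $Z_\lambda$ (the only change being that LR tableaux of content $\lambda$ and shape $\alpha\oplus(\nu/\alpha')$ are replaced by SYT of that shape rectifying to $A_\lambda$, which are equinumerous), and the telescoping argument of Theorem~\ref{t main thm tableaux} gives $|\text{CT}^{\,-}_{A_\lambda,d}(\nu)|=\gcoef$ directly. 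No bijection between $\text{CYT}^-$ and $\text{CT}^-$ is needed.
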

\begin{proof}
We have already shown that the cardinality of (I) is $\gcoef$.
By Theorem \ref{t plain commutes with color lowering} and Proposition \ref{p plain basics} (i), the color lowering operator $C_-$ restricted to $\text{CT}_{A_\lambda}$ gives a  bijection from $\text{CT}^-_{A_\lambda, d}(\nu)$ to $\text{CT}^+_{A_\lambda, d+1}(\nu)$ for  $d \in \{0,1,\ldots,n-1\}$.
The proof of Hook Kronecker Rule I carries over to this setting with little change; to adapt the proof of Proposition \ref{p arm leg}, all that is required is to note that the number of Littlewood-Richardson tableaux of content $\lambda$ and shape $\alpha \oplus (\nu/\alpha')$ is the same as the number of standard tableaux of shape $\alpha \oplus (\nu/\alpha')$ that are plactic equivalent to $A_\lambda$. Hence $\gcoef = |\text{CT}^-_{A_\lambda, d}(\nu)|$.
Finally, (II) and (III) have the same cardinality as $P\nmix\big(\text{CW}^-_{A_\lambda, d, B_\nu}\big) = \text{CT}^-_{A_\lambda, d}(\nu)$.
\end{proof}
The set of colored words $\text{CYW}_{\lambda,d}$ defined in the introduction is related to the $\text{CW}_{Z_\lambda^\stand,d}$ defined above by standardizing: $(\text{CYW}_{\lambda,d})^\stand = \text{CW}_{Z_\lambda^\stand,d}$.  Figure \ref{f cw} (after standardizing) illustrates Hook Kronecker Rule III for  $\lambda = (3,1,1)$, $d= 2$, and all  $B_\nu$.

In addition to the three descriptions above, we also point out that the tableaux $A_\lambda$ and $B_\nu$ in the definition of $\text{CW}_{A_\lambda, d, B_\nu}$ have many descriptions:
\be \label{e A lambda B nu}
\begin{array}{lcl}
A_\lambda\ =\ P(w^\plain) = P_\lr(w^{\barrev}) = Q(w^{\barrev\;\inv\;\bneg}) = Q\nmix(w^{\barrev\;\inv}) = Q\nmix(w^{\rev\;\nobarrev\;\inv}) \\[2mm]
B_\nu \ = \ Q\nmix(w) = Q(w^{\bneg})= P(w^{\inv\; \barrev \; \plain}).
\end{array}
\ee
These equalities are by Propositions \ref{p neg and mixed insertion}, \ref{p plain basics}, and \ref{p plain and neg}.

\begin{example}\label{ex CW}
Let $A_\lambda = {\tiny\tableau{1&4&5&6\\2\\3\\}}$ and $B_\nu = {\tiny\tableau{1&3&6\\2&5\\4\\}}$. The first line below gives the nonempty sets $\text{CW}^-_{A_\lambda, d, B_\nu}$ for all $d$; the second line gives the sets $\text{CT}^-_{A_\lambda, d}(\nu)$, i.e., the mixed insertion tableaux of the words on the first line; the third line gives the tableaux $P\nmix(w^{\barrev\;\inv})$ for the $w$ on the first line (these are the subject of Proposition \ref{p CT d b nu}, below):
\[
{\tiny
\begin{array}{lllll}
\text{\small$\{4\ 2\ 5\ \crc{3}\ 1\ 6\}$}&
\text{\small$\{4\ 1\ 5\ \crc{3}\ \crc{2}\ 6,\ 5\ \crc{3}\ 2\ \crc{4}\ 1\ 6\}$}&
\text{\small$\{5\ \crc{3}\ 6\ \crc{4}\ \crc{2}\ 1,\ 5\ \crc{3}\ 1\ \crc{4}\ \crc{2}\ 6\}$}&
\text{\small$\{5\ \crc{3}\ 6\ \crc{4}\ \crc{2}\ \crc{1}\}$}\\[2mm]
\left\{\tableau{
1&\crc{3}&6\\2&5\\4\\}\right\}&
\left\{\tableau{
1&\crc{3}&6\\\crc{2}&5\\4\\}\quad\tableau{
1&\crc{3}&6\\2&\crc{4}\\5\\}\right\}&
\left\{\tableau{
1&\crc{2}&\crc{3}\\\crc{4}&6\\5\\}\quad\tableau{
1&\crc{3}&6\\\crc{2}&\crc{4}\\5\\}\right\}&
\left\{\tableau{
\crc{1}&\crc{2}&\crc{3}\\\crc{4}&6\\5\\}\right\}\\[7mm]
\left\{\tableau{
1&3&\crc{4}&6\\2\\5\\}\right\}&
\left\{\tableau{
1&3&\crc{4}&6\\2\\\crc{5}\\}\quad\tableau{
1&\crc{2}&\crc{4}&6\\3\\5\\}\right\}&
\left\{\tableau{
1&\crc{2}&3&\crc{5}\\\crc{4}\\6\\}\quad\tableau{
1&\crc{2}&\crc{5}&6\\3\\\crc{4}\\}\right\}&
\left\{\tableau{
1&\crc{2}&3&\crc{6}\\\crc{4}\\\crc{5}\\}\right\}.
\end{array}}
\]

Here are the corresponding color lowerable sets of words and tableaux ($\text{CW}^+_{A_\lambda, d, B_\nu}$, $\text{CT}^+_{A_\lambda, d}(\nu)$, and $\{P\nmix(w^{\barrev\;\inv}) : w \in \text{CW}^+_{A_\lambda, d, B_\nu}\}$):
\[
{\tiny
\begin{array}{lllll}
\text{\small$\{2\ \crc{3}\ 5\ \crc{4}\ 1\ 6\}$}&
\text{\small$\{1\ \crc{3}\ 5\ \crc{4}\ \crc{2}\ 6,\ \crc{3}\ \crc{4}\ 2\ \crc{5}\ 1\ 6\}$}&
\text{\small$\{\crc{3}\ \crc{4}\ 6\ \crc{5}\ \crc{2}\ 1,\ \crc{3}\ \crc{4}\ 1\ \crc{5}\ \crc{2}\ 6\}$}&
\text{\small$\{\crc{3}\ \crc{4}\ 6\ \crc{5}\ \crc{2}\ \crc{1}\}$}\\[2mm]
\left\{\tableau{
1&\crc{3}&6\\2&5\\\crc{4}\\}\right\}&
\left\{\tableau{
1&\crc{3}&6\\\crc{2}&5\\\crc{4}\\}\quad\tableau{
1&\crc{3}&6\\2&\crc{4}\\\crc{5}\\}\right\}&
\left\{\tableau{
1&\crc{2}&\crc{3}\\\crc{4}&6\\\crc{5}\\}\quad\tableau{
1&\crc{3}&6\\\crc{2}&\crc{4}\\\crc{5}\\}\right\}&
\left\{\tableau{
\crc{1}&\crc{2}&\crc{3}\\\crc{4}&6\\\crc{5}\\}\right\}\\[7mm]
\left\{\tableau{
1&3&\crc{4}&6\\\crc{2}\\5\\}\right\}&
\left\{\tableau{
1&3&\crc{5}&6\\\crc{2}\\\crc{4}\\}\quad\tableau{
\crc{1}&\crc{2}&\crc{4}&6\\3\\5\\}\right\}&
\left\{\tableau{
\crc{1}&3&\crc{4}&\crc{5}\\\crc{2}\\6\\}\quad\tableau{
\crc{1}&\crc{4}&\crc{5}&6\\\crc{2}\\3\\}\right\}&
\left\{\tableau{
\crc{1}&3&\crc{5}&\crc{6}\\\crc{2}\\\crc{4}\\}\right\}.
\end{array}}\]
\end{example}

\subsection{A generalization to skew shapes} \label{ss generalization to skew shapes}
Here we show that Hook Kronecker Rule I generalizes in a straightforward way to the case $\nu$ is a skew shape.
For $\beta$ a skew shape of size $n$ and $\lambda, \mu \vdash n$, the Kronecker coefficient $g_{\lambda \mu \beta}$ is defined by
\[
g_{\lambda \mu \beta} = \langle s_\lambda * s_\mu, s_\beta \rangle.
\]
The definitions of ${}^\plain$, colored Yamanouchi tableaux, color lowerable, color raisable, and $C_-$ all carry over to colored tableaux of skew shape without change.

For a tableau $B$ and (skew) shape $\theta$ contained in the shape of $B$, $B_\theta$ denotes the (skew) subtableau of $B$ obtained by restricting to the shape $\theta$.
\begin{lemma} \label{l skew plain}
Let $T$ be a $\text{CT}^{\,\prec}$ of shape $\nu/ \kappa$. Let $B_\kappa$ be a $\text{CT}^{\,\prec}$ of shape $\kappa$ which contains only barred letters $<$ all letters of $T$ and define $B$ to be the union of $B_\kappa$ and $T$ (hence $B$ is a $\text{CT}^{\,\prec}$ of shape $\nu$). Then $T^\plain \ke (B^\plain)_{\lambda/\kappa'}$, where $\lambda =\sh(B^\plain)$.
\end{lemma}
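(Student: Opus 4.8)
\textit{Proof proposal.} Since both $T$ and $B$ are already colored tableaux for the order $\prec$, the conversion in the definition of ${}^\plain$ is trivial, so $B^\plain$ is the straight-shape tableau plactically equivalent to $\sub_\crcempty(B)^* \oplus \sub_\varnothing(B)$, and likewise $T^\plain \ke \sub_\crcempty(T)^* \oplus \sub_\varnothing(T)$. The first step is to record the block structure of the barred part. Because the (barred) letters of $B_\kappa$ are all smaller than every letter of $T$, the barred subtableau $\sub_\crcempty(B)$ of $B$ is the straight-shape tableau obtained by gluing $\sub_\crcempty(T)$ (of skew shape $\gamma/\kappa$, where $\gamma := \sh\sub_\crcempty(B)$) onto $B_\kappa$ (of shape $\kappa$, carrying the smallest entries). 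Transposing and removing bars, $\sub_\crcempty(B)^*$ is then a semistandard tableau of shape $\gamma'$ whose restriction to the top-left subdiagram $\kappa'$ is $B_\kappa^*$ and whose restriction to $\gamma'/\kappa'$ is $\sub_\crcempty(T)^*$, with every entry of the first block smaller than every entry of the second; moreover $\sub_\varnothing(B) = \sub_\varnothing(T)$, and all its entries are larger than those of $B_\kappa^*$.

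Next I would fix the threshold $c := \max(\text{entries of } B_\kappa)$ and invoke two elementary facts: (a) plactic equivalence is preserved by passing to the subword of letters $\leq c$ and by passing to the subword of letters $> c$ (a short check on the two Knuth relations); and (b) for a semistandard tableau the cells with entries $\leq c$ form a Young subdiagram in the top-left corner, and deleting from the row reading word the entries $> c$ (resp.\ $\leq c$) yields exactly the reading word of the restriction of the tableau to that subdiagram (resp.\ to its complement). Applying (a)--(b) to $B^\plain$, which is plactically equivalent to $\reading(\sub_\crcempty(B)^*)\,\reading(\sub_\varnothing(B))$, and using that the only entries $\leq c$ occur inside $B_\kappa^*$, one gets $\sub_{\leq c}\big(B^\plain\big) = B_\kappa^*$; in particular $\kappa' \subseteq \lambda$ and the cells of $B^\plain$ carrying entries $> c$ are precisely those of $\lambda/\kappa'$. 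Restricting reading words to letters $> c$ then gives
\begin{align*}
\reading\big((B^\plain)_{\lambda/\kappa'}\big)
&= \sub_{>c}\big(\reading(B^\plain)\big)
\ke \sub_{>c}\big(\reading(\sub_\crcempty(B)^*)\,\reading(\sub_\varnothing(B))\big) \\
&= \reading(\sub_\crcempty(T)^*)\,\reading(\sub_\varnothing(T))
\ke \reading\big(T^\plain\big),
\end{align*}
which is the assertion.

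The only real content beyond unwinding definitions is the second step: correctly tracking which entries are ``small'' (those coming from $B_\kappa$) and which are ``large'' (those coming from $T$) as one transposes, removes bars, forms the $\oplus$, and rectifies, so that the small block $B_\kappa^*$ is seen to sit undisturbed as the top-left corner $\kappa'$ of $B^\plain$ while the large block rectifies exactly as $T^\plain$ does. I expect this bookkeeping, together with a preliminary remark fixing the hypothesis ``$B_\kappa$'s letters $<$ all letters of $T$'' as the numerical inequality between the underlying integers (so that the entries of $T$ which become unbarred in $T^\plain$ still dominate those of $B_\kappa$; alternatively one first reduces to $B$ standard by the standardization-compatibility of ${}^\plain$, cf.\ Propositions \ref{p standardization commutes}, \ref{p plain basics}, and \ref{p mix commutes with subwords}), to be the main obstacle, the rest being formal.
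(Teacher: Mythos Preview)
Your argument is correct and matches the paper's approach almost exactly: both identify $B^\plain$ plactically with $\reading(\sub_\crcempty(B)^*)\,\reading(\sub_\varnothing(B))$, observe that the entries coming from $B_\kappa^*$ sit undisturbed in the corner $\kappa'$ of $B^\plain$, and then pass to the subword of ``large'' letters (the paper uses $\sub_{\geq m}$ with $m$ the smallest letter of $T^\plain$, you use $\sub_{>c}$ with $c$ the largest value from $B_\kappa$) to conclude. Your closing remark about possible ties between unbarred letters of $T$ and values in $B_\kappa^*$, and the standardization fix, is apt; the paper's write-up implicitly relies on the same reduction.
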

\begin{proof}
Since $B^\plain$ can be computed by inserting $\reading(\sub_\varnothing(B))$ into $\sub_{\crcempty}(B)^*$, and the insertion of a letter $x$ into an ordinary tableau does not affect letters $<x$,
\be \label{e B plain kappa}
(B^\plain)_{\kappa'} = (B_\kappa)^*.
\ee
Set $w = \reading(\sub_{\crcempty}(B)^*) \, \reading(\sub_\varnothing(B))$, so by definition, $B^\plain \ke w$. Then, letting $m$ be the smallest letter of $T^\plain$, we have
\[
(B^\plain)_{\lambda/\kappa'} = \sub_{\geq m}(B^\plain) \ke \sub_{\geq m}(w) = \reading(\sub_{\crcempty}(T)^*) \, \reading(\sub_\varnothing(T)) \ke T^\plain,
\]
where the first equality is by \eqref{e B plain kappa}, the first plactic equivalence is a well-known property of the plactic monoid, and the second equality is straightforward from definitions.
\end{proof}

\begin{proposition} \label{p skew main theorem}
Theorem \ref{t plain commutes with color lowering} generalizes to skew tableaux: for any color lowerable tableau $T$ of skew shape $\nu / \kappa$, $T^\plain = C_-(T)^\plain$.
\end{proposition}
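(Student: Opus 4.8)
The plan is to deduce the skew statement from its straight-shape counterpart, Theorem~\ref{t plain commutes with color lowering}, using the embedding of Lemma~\ref{l skew plain}. As a first reduction, we may assume $T$ is standard: on skew colored tableaux $C_-$ commutes with standardization by the argument of Proposition~\ref{p standardization hook subwords}(a), which never uses straightness, and ${}^\plain$ commutes with standardization by the arguments behind Propositions~\ref{p standardize conversion} and~\ref{p standardization commutes}, which only use that conversion is a switching operation. Since the statement for $T^\stand$ forces it for $T$, we take $T\in\text{CT}^<(\nu/\kappa)$ standard and color lowerable, with southwest entry $\crc x$.

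Next, the embedding. Relabel the entries of $T$ upward by $\kappa_1$; this preserves the $\text{CT}^<$-structure and commutes with ${}^\plain$ and $C_-$ up to the same relabeling, since shifting all values by a constant preserves every comparison in the natural order. Now every entry of $T$ exceeds $\kappa_1$, so if $\hat T:=\convert{<\to\prec}{T}$ and $B_\kappa\in\text{CT}^\prec(\kappa)$ has $\crc j$ in each cell of column $j$, then $B:=B_\kappa\cup\hat T$ is a bona fide $\text{CT}^\prec(\nu)$. The crucial point is that $\convert{\prec\to<}{B}=B_\kappa\cup T$: running the conversion $\prec\to<$, the tiny barred letters of $B_\kappa$ never enter an exchange (they stay below every other entry in both orders), and the barred letters of $\hat T$ exchange only with unbarred letters, all of which lie in the skew cells, and can only move down and to the right, so they never enter $\kappa$; hence the conversion is the identity on $B_\kappa$ and restricts to $\convert{\prec\to<}{\hat T}=T$ on the skew cells. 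Write $B^<:=B_\kappa\cup T\in\text{CT}^<(\nu)$; since ${}^\plain$ depends only on the conversion of its argument to $\prec$, we have $B^\plain=(B^<)^\plain$ and $T^\plain=\hat T^\plain$, so Lemma~\ref{l skew plain} gives $T^\plain\ke(B^\plain)_{\lambda/\kappa'}$ with $\lambda:=\sh(B^\plain)$. The identical construction applied to $C_-(T)$, whose entries still exceed $\kappa_1$, gives $C_-(T)^\plain\ke(B'^\plain)_{\lambda'/\kappa'}$, where $B':=B_\kappa\cup\convert{<\to\prec}{C_-(T)}$ and $\convert{\prec\to<}{B'}=B_\kappa\cup C_-(T)$.

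Finally one transfers $C_-$ across the embedding. When $\kappa$ does not fill the entire last row of $\nu$, the southwest cell of $\nu$ lies in the skew region, so $B^<=B_\kappa\cup T$ is color lowerable with southwest entry $\crc x$ and $C_-(B^<)=B_\kappa\cup C_-(T)$; Theorem~\ref{t plain commutes with color lowering} then gives $(B^<)^\plain=C_-(B^<)^\plain$, i.e. $B^\plain=(B_\kappa\cup C_-(T))^\plain$, and restricting both sides to $\lambda/\kappa'$ yields $T^\plain\ke C_-(T)^\plain$, hence $T^\plain=C_-(T)^\plain$ since two straight-shape ordinary tableaux in one plactic class coincide. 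The main obstacle, and the part I expect to require genuine extra work, is the remaining case, where $\kappa$ fills the last row of $\nu$ (so $\ell(\kappa)=\ell(\nu)$): then the southwest entry of $B^<$ is one of the padding letters of $B_\kappa$, $C_-(B^<)$ unbars the wrong cell, and no straight-shape embedding can repair this, because the southwest cell of a straight shape always sits in column $1$ whereas here the southwest cell of $\nu/\kappa$ does not. I would treat this case by a separate induction on the number of boxes of $\kappa$ in the last row, re-embedding and invoking the already-established part of the proposition at each step; failing a clean induction, I would instead re-run the word-level argument of Theorems~\ref{t main thm words} and~\ref{t plain commutes with pi-} using a reading word of the skew tableau in place of a word realizing a straight mixed-insertion tableau. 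In either approach the delicate point is keeping track of the southwest entry under rectification.
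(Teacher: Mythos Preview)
Your argument in the ``easy'' case (the southwest cell of $\nu/\kappa$ lies in column $1$, equivalently $\ell(\kappa)<\ell(\nu)$) is exactly the paper's proof, just with the conversions between $<$ and $\prec$ spelled out more carefully than the paper bothers to. Your standardization and entry-shifting reductions are harmless, though the first is unnecessary and the second is implicit in the paper's choice of $B_\kappa$.

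The part you flag as a ``main obstacle'' is not one. The conclusion $T^\plain=C_-(T)^\plain$ depends only on the skew tableau $T$ intrinsically---on the entries and their relative positions---not on the particular pair $(\nu,\kappa)$ used to name the shape. Both ${}^\plain$ (conversion plus rectification of $\sub_{\crcempty}^*\oplus\sub_\varnothing$) and $C_-$ (unbar the southwest entry) are manifestly invariant under translating the shape. So without loss of generality translate $\nu/\kappa$ so that its leftmost column is column $1$. Once some row $r$ has a cell in column $1$, i.e.\ $\kappa_r=0$, the fact that $\kappa$ is a partition forces $\kappa_{r'}=0$ for all $r'\ge r$; in particular $\kappa_{\ell(\nu)}=0$, so $\ell(\kappa)<\ell(\nu)$ and the southwest cell of the skew shape sits at $(\ell(\nu),1)$. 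You are now in your easy case, and your argument (which is the paper's) finishes the proof.

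Your assertion that ``no straight-shape embedding can repair this'' is therefore wrong: the repair is simply to choose a different representation of the same skew shape before embedding. The induction on $|\kappa|$ in the last row and the proposed rerun of Theorems~\ref{t main thm words} and~\ref{t plain commutes with pi-} are unnecessary. The paper's three-line proof is silent on this point as well, presumably because the author is tacitly working with skew shapes up to translation; your more explicit write-up would benefit from stating this reduction in one sentence rather than treating it as a separate case.
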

\begin{proof}
Using Lemma \ref{l skew plain} and with $B$ and  $\lambda$ as in the lemma, we compute
\[
T^\plain \ke (B^\plain)_{\lambda/\kappa'} = (C_-(B)^\plain)_{\lambda/\kappa'} \ke C_-(T)^\plain,
\]
where the equality is by Theorem \ref{t plain commutes with color lowering}.
The result follows since $T^\plain$ and $C_-(T)^\plain$ have straight-shape.
\end{proof}

The proof of Proposition \ref{p arm leg} generalizes to the case $\nu$ is skew shape with little change (the generalized Littlewood-Richardson coefficient  $c_{\alpha \beta}^\gamma = \langle s_\alpha s_\beta , s_\gamma \rangle$ makes sense for any skew shapes $\alpha, \beta, \gamma$). Then, in view of Proposition \ref{p skew main theorem}, the proof of Hook Kronecker Rule I carries over to this case as well. Hence
\begin{corollary}[Hook Kronecker Rule IV]
The Kronecker coefficient $g_{\lambda\,\mu(d)\,\beta}$ is equal to the number of color raisable Yamanouchi tableaux of content $\lambda$, total color $d$, and shape $\beta$.
\end{corollary}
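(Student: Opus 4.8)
The plan is to run the proof of Hook Kronecker Rule~I (Theorem~\ref{t main thm tableaux}) essentially verbatim, with the straight shape $\nu$ replaced throughout by the skew shape $\beta$. Two ingredients must first be upgraded to the skew setting: the identity $g_{\lambda\,\mu(d)\,\beta} + g_{\lambda\,\mu(d-1)\,\beta} = |\text{CYT}_{\lambda,d}(\beta)|$ (the skew analogue of Proposition~\ref{p arm leg}) and the bijection $C_- : \text{CYT}^{\,+}_{\lambda,d+1}(\beta) \xrightarrow{\cong} \text{CYT}^{\,-}_{\lambda,d}(\beta)$ (the skew analogue of Corollary~\ref{c color lowering Yamanouchi bijection}).

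For the first ingredient, the symmetric-function side of the proof of Proposition~\ref{p arm leg} needs no change: $s_{(1^d)}s_{(n-d)} = s_{\mu(d)} + s_{\mu(d-1)}$ and Littlewood's identity $s_\lambda*(s_{(1^d)}s_{(n-d)}) = \sum_{\alpha\vdash d,\ \gamma\vdash n-d} c_{\alpha\,\gamma}^\lambda\, s_{\alpha'}s_\gamma$ hold as stated, and pairing with $s_\beta$ is legitimate because $g_{\lambda\,\mu\,\beta}$ is \emph{defined} as $\langle s_\lambda * s_\mu, s_\beta\rangle$ and $\langle s_{\alpha'}s_\gamma, s_\beta\rangle = c_{\alpha'\,\gamma}^\beta$ is a perfectly good generalized Littlewood--Richardson coefficient; this identifies the left side with $\sum_{\alpha\vdash d,\ \gamma\vdash n-d} c_{\alpha\,\gamma}^\lambda\, c_{\alpha'\,\gamma}^\beta$. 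For the combinatorial side I would mimic the computation of part (D): after converting an element of $\text{CYT}_{\lambda,d}(\beta)$ to the order $\prec$ and invoking Lemma~\ref{l skew plain}, one sees that $\text{CYT}_{\lambda,d}(\beta)$ is in bijection with a union, over partitions $\alpha\vdash d$, of sets of Littlewood--Richardson fillings determined by $\alpha$ and $\beta$; multiplying the resulting count by $s_\lambda$, summing over $\lambda$, and using the combinatorial definition of Schur functions reproduces $\sum_{\alpha\vdash d,\ \gamma\vdash n-d} c_{\alpha\,\gamma}^\lambda\, c_{\alpha'\,\gamma}^\beta$ exactly as in the straight case. The second ingredient is then immediate: Proposition~\ref{p skew main theorem} gives $T^\plain = C_-(T)^\plain$ for color lowerable skew $T$, and since $C_-$ visibly preserves content and shape and lowers total color by one, it restricts to the desired bijection.

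With these in hand, the generating-function argument from the proof of Theorem~\ref{t main thm tableaux} carries over without modification: using the skew Proposition~\ref{p arm leg}, the partition of $\text{CYT}_{\lambda,d}(\beta)$ into color raisable and color lowerable tableaux, and the bijection above, one obtains $(1+t)\sum_{d} g_{\lambda\,\mu(d)\,\beta}\,t^d = (1+t)\sum_{d} |\text{CYT}^{\,-}_{\lambda,d}(\beta)|\,t^d$; dividing by $1+t$ and extracting the coefficient of $t^d$ yields $g_{\lambda\,\mu(d)\,\beta} = |\text{CYT}^{\,-}_{\lambda,d}(\beta)|$, which is the assertion. The only step that is not purely formal is the skew version of part (D) of Proposition~\ref{p arm leg}: one must check that passing to the order $\prec$ correctly separates a skew colored Yamanouchi tableau into a barred piece and an unbarred piece whose plactic data is governed by Littlewood--Richardson coefficients. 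This is exactly what Lemma~\ref{l skew plain} delivers --- it trades a skew tableau for a straight one by prepending a block of very small barred letters --- so the apparent obstacle dissolves, and everything downstream is bookkeeping.
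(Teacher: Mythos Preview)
Your proposal is correct and follows essentially the same route as the paper: generalize Proposition~\ref{p arm leg} to skew $\beta$, invoke Proposition~\ref{p skew main theorem} for the color-lowering bijection, and rerun the generating-function argument of Theorem~\ref{t main thm tableaux}. One minor remark: Lemma~\ref{l skew plain} is really the engine behind Proposition~\ref{p skew main theorem} rather than what makes the skew version of part~(D) work---the latter follows directly from the definition of $T^\plain$ for skew $T$ (which carries over verbatim) together with the fact that generalized Littlewood--Richardson coefficients $\langle s_\alpha s_\gamma, s_\beta\rangle$ make sense for skew $\beta$; in particular, after converting to $\prec$ the barred part of a skew $T'$ occupies a skew shape $\theta/\kappa$, so the natural parameterization is by intermediate $\theta$ with $\kappa\subseteq\theta\subseteq\nu$ rather than by partitions $\alpha\vdash d$, but the Schur-function manipulation goes through unchanged.
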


\subsection{Symmetries of the hook Kronecker rules} \label{ss symmetries of the hook Kronecker rules}
The Weyl group $D_3$ (which is isomorphic to $\S_4$) acts on triples of partitions of  $n$ by permuting them and transposing an even number of them.  Kronecker coefficients are invariant under this action, i.e., $g_{\lambda \mu \nu} = g_{\theta(\lambda, \mu, \nu)}$ for any  $\theta \in D_3$.  What we actually want to consider here is this action restricted to the subset of triples for which our rules apply, i.e. those with $\mu$ a hook shape:
the subgroup of $D_3$ taking this subset to itself is isomorphic to the dihedral group of order 8.

As far as we can tell, only 2 of these 8 symmetries can be seen from the hook Kronecker rules: $(\lambda, \mu, \nu) \mapsto (\lambda', \mu', \nu)$ and (of course) the identity.
\begin{proposition}
We have the following bijections of sets of colored permutations:
\begin{align}
\text{CW}_{A, d, B} &\xrightarrow{{}^{\rev\;*}} \text{CW}_{{A}^{\transpose}, n-d, {B}^{\evac}} \label{ep1 rev star}\\
\text{CW}^{\,-}_{A, d, B} &\xrightarrow{{}^{\rev\;*}} \text{CW}^{\,+}_{{A}^{\transpose}, n-d, {B}^{\evac}} \label{ep2 rev star}
\end{align}
\end{proposition}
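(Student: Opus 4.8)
The plan is to prove both \eqref{ep1 rev star} and \eqref{ep2 rev star} with the single involution $w \mapsto w^{\rev\;*}$, drawing only on Proposition \ref{p rev star mixed}, Proposition \ref{p plain and neg}(iv), and \eqref{e rev}. First I would record the structural facts. Since $w \mapsto w^{\rev}$ only permutes positions while $w \mapsto w^{*}$ acts letterwise, the two operations commute and each is an involution; hence $w \mapsto w^{\rev\;*}$ is an involution on the set of colored permutations of length $n$, in particular a bijection. It is therefore enough to show that $w \in \text{CW}_{A, d, B}$ forces $w^{\rev\;*} \in \text{CW}_{A^{\transpose},\, n-d,\, B^{\evac}}$; applying this statement again to $w^{\rev\;*}$ (and using that $T \mapsto T^{\transpose}$ and $T \mapsto T^{\evac}$ are involutions, and $n - (n-d) = d$) then promotes the inclusion to a bijection $\text{CW}_{A, d, B} \to \text{CW}_{A^{\transpose},\, n-d,\, B^{\evac}}$.

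So take $w \in \text{CW}_{A, d, B}$ and check the three defining conditions for $w^{\rev\;*}$. Total color: $w \mapsto w^{*}$ swaps barred and unbarred letters while $w \mapsto w^{\rev}$ preserves their counts, so $\tc(w^{\rev\;*}) = n - \tc(w) = n - d$. Mixed recording tableau: applying Proposition \ref{p rev star mixed}(ii) to $w^{\rev}$ and then part (iv) of that proposition, $Q\nmix(w^{\rev\;*}) = Q\nmix(w^{\rev})^{\transpose} = \bigl(Q\nmix(w)^{\evac\;\transpose}\bigr)^{\transpose} = Q\nmix(w)^{\evac} = B^{\evac}$. Plactic invariant: substituting $w^{\rev}$ into Proposition \ref{p plain and neg}(iv) gives $(w^{\rev\;*})^{\plain} = (w^{\rev})^{*\;\plain} = (w^{\rev})^{\rev\;\plain\;\rev} = (w^{\plain})^{\rev}$; because $w$ is a colored permutation, $w^{\plain}$ is an ordinary permutation, so \eqref{e rev} gives $P\bigl((w^{\rev\;*})^{\plain}\bigr) = P\bigl((w^{\plain})^{\rev}\bigr) = P(w^{\plain})^{\transpose} = A^{\transpose}$. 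This proves \eqref{ep1 rev star}.

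For \eqref{ep2 rev star} it remains to see that $w \mapsto w^{\rev\;*}$ interchanges color raisable words ($\sw(w)$ unbarred) with color lowerable words ($\sw(w)$ barred). By Proposition \ref{p rev star mixed}(iii) and then (i), $P\nmix(w^{\rev\;*}) = \bigl(P\nmix(w)^{\transpose}\bigr)^{*}$; unwinding the tableau operation $T \mapsto T^{*}$ (apply $*$ to each entry, then transpose) and using that an entrywise map commutes with transposition, this is $P\nmix(w)$ with the bar toggled on every entry. Hence its southwest entry is $\sw(w)^{*}$, i.e.\ $\sw(w^{\rev\;*}) = \sw(w)^{*}$, so $w$ is color raisable exactly when $w^{\rev\;*}$ is color lowerable; together with \eqref{ep1 rev star} this yields \eqref{ep2 rev star}. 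I do not anticipate a real obstacle: the whole argument is bookkeeping, and the only places to be careful are keeping the composition order straight ($w \mapsto w^{\rev} \mapsto w^{\rev\;*}$), invoking Proposition \ref{p plain and neg}(iv) for $w^{\rev}$ rather than $w$, and cancelling the two transpositions built into the tableau operation $T \mapsto T^{*}$.
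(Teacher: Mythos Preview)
Your proof is correct. For \eqref{ep1 rev star} you do exactly what the paper does---invoke Proposition~\ref{p rev star mixed} and Proposition~\ref{p plain and neg}(iv)---only with the details spelled out.

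For \eqref{ep2 rev star} you take a slightly different route. The paper argues via the combinatorics of special subwords: the automorphism $w\mapsto w^{\rev\;*}$ sends decreasing hook subwords to decreasing hook subwords and exchanges leftmost with rightmost special subwords, from which one deduces that $\sw(w^{\rev\;*})=\sw(w)^*$. You instead compute $P\nmix(w^{\rev\;*})$ directly from Proposition~\ref{p rev star mixed}(i),(iii), observe that it is $P\nmix(w)$ with every bar toggled, and read off the southwest entry. Your argument is a bit more direct and avoids unpacking how $\eta$ and special subwords behave under ${}^{\rev\;*}$; the paper's phrasing, on the other hand, ties back to the machinery already used in the proof of Theorem~\ref{t main thm words}. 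Either way the conclusion $\sw(w^{\rev\;*})=\sw(w)^*$ is the same, and both arguments are short.
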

\begin{proof}
The bijection \eqref{ep1 rev star} follows from Proposition \ref{p rev star mixed} and Proposition \ref{p plain and neg} (iv).  Since the automorphism $w \mapsto w^{\rev\;*}$ of colored permutations identifies leftmost special subwords with rightmost special subwords, \eqref{ep2 rev star} follows from \eqref{ep1 rev star}.
\end{proof}

Regarding the symmetry $(\lambda, \mu, \nu) \mapsto (\nu, \mu, \lambda)$, we have
\begin{proposition}\label{p CT d b nu}
The subset of standard colored tableaux
\[
\begin{array}{lcl}
\text{CT}_{d, B_\nu}(\lambda) &:=& \left\{P\nmix(w^{\barrev\; \inv}): w \in \text{CW}_{A_\lambda, d, B_\nu} \right\}  \\
\end{array}
\]
does not depend on the choice of $A_\lambda$.  Therefore $\text{CT}_{d, B_\nu}(\lambda)$ is a set of standard colored tableaux of shape $\lambda$ with cardinality $\gcoef + g_{\lambda\, \mu(d-1)\, \nu}$.
\end{proposition}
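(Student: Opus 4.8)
The plan is to rewrite the three conditions defining $\text{CW}_{A_\lambda, d, B_\nu}$ under the substitution $v := w^{\barrev\;\inv}$ — equivalently $w = v^{\inv\;\barrev}$, since ${}^{\inv}$ and ${}^{\barrev}$ are commuting involutions of colored permutations — and thereby present $\text{CT}_{d, B_\nu}(\lambda)$ as a set of standard colored tableaux cut out by conditions in which $A_\lambda$ does not appear (everything here is already a colored permutation, so no standardization step is needed). For the translation: by the chain of equalities \eqref{e A lambda B nu}, $A_\lambda = Q\nmix(w^{\barrev\;\inv}) = Q\nmix(v)$, so $P(w^\plain) = A_\lambda$ is equivalent to $Q\nmix(v) = A_\lambda$; since ${}^\inv$ and ${}^\barrev$ preserve the number of barred letters, $\tc(w) = d$ is equivalent to $\tc(v) = d$; and by Proposition \ref{p plain basics}(iii) applied to $v$, $Q\nmix(w) = Q\nmix(v^{\inv\;\barrev}) = P(v^{\barud\;\barrev\;\plain})$, so $Q\nmix(w) = B_\nu$ is equivalent to $P(v^{\barud\;\barrev\;\plain}) = B_\nu$. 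As $w \mapsto w^{\barrev\;\inv}$ is a bijection of colored permutations, this yields
\[
\text{CT}_{d, B_\nu}(\lambda) \;=\; \big\{\, P\nmix(v) \;:\; Q\nmix(v) = A_\lambda,\ \tc(v) = d,\ P(v^{\barud\;\barrev\;\plain}) = B_\nu \,\big\}.
\]

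Next I would use that mixed insertion restricts to a bijection between colored permutations of length $n$ and pairs consisting of a standard colored tableau of size $n$ and an SYT of the same shape (a basic property of mixed insertion, \cite{Hmixed}). Fixing $Q\nmix(v) = A_\lambda$, the permutation $v$ is then determined by $T := P\nmix(v)$, and $T$ ranges over \emph{all} standard colored tableaux of shape $\lambda$. Two of the three conditions above depend on $T$ only: the number of barred cells of $P\nmix(v)$ equals $\tc(v)$ — apply Propositions \ref{p conversion} and \ref{p mix commutes with subwords} to get $\sub_\crcempty(P^\prec_\mix(v)) = P^\prec_\mix(\sub_\crcempty(v))$, a tableau with $\tc(v)$ cells, and recall that conversion does not alter the entries of a tableau — so $\tc(v) = \tc(T)$; and, crucially, Proposition \ref{p plain basics}(iv) says exactly that $P(v^{\barud\;\barrev\;\plain})$ is computed from $U := P\nmix(v)$, namely it is the straight-shape tableau plactic equivalent to $\sub_\crcempty(U')^{*\;\evac} \oplus \sub_\varnothing(U')$ with $U' = \convert{< \to \prec}{U}$. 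Writing $U \mapsto f(U)$ for this last operation, we conclude
\[
\text{CT}_{d, B_\nu}(\lambda) \;=\; \big\{\, T \;:\; \sh(T) = \lambda,\ \tc(T) = d,\ f(T) = B_\nu \,\big\},
\]
which involves no choice of $A_\lambda$; in particular every element has shape $\lambda$.

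For the cardinality, $w \mapsto w^{\barrev\;\inv}$ carries $\text{CW}_{A_\lambda, d, B_\nu}$ bijectively onto the set of $v$ appearing in the first display, on which $v \mapsto P\nmix(v)$ is injective (since $Q\nmix(v) = A_\lambda$ is fixed), so $|\text{CT}_{d, B_\nu}(\lambda)| = |\text{CW}_{A_\lambda, d, B_\nu}|$. Splitting $|\text{CW}_{A_\lambda, d, B_\nu}| = |\text{CW}^{-}_{A_\lambda, d, B_\nu}| + |\text{CW}^{+}_{A_\lambda, d, B_\nu}|$ and using Corollary \ref{c kronecker words} together with the bijection $C_-\colon \text{CT}^{+}_{A_\lambda, d}(\nu) \xrightarrow{\cong} \text{CT}^{-}_{A_\lambda, d-1}(\nu)$ from its proof (which gives $|\text{CW}^{+}_{A_\lambda, d, B_\nu}| = |\text{CW}^{-}_{A_\lambda, d-1, B_\nu}| = g_{\lambda\,\mu(d-1)\,\nu}$), we obtain $|\text{CW}_{A_\lambda, d, B_\nu}| = \gcoef + g_{\lambda\,\mu(d-1)\,\nu}$, as claimed.

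The substantive point — and the only place real input from the rest of the paper is used — is recognizing that Proposition \ref{p plain basics}(iii)--(iv) together say precisely that $P(v^{\barud\;\barrev\;\plain})$, the quantity governing the ``$B_\nu$ condition'', depends on $v$ only through its mixed insertion tableau; once that is noticed the independence from $A_\lambda$ is formal. I expect the main thing to watch is the bookkeeping of the involutions ${}^\inv$ and ${}^\barrev$ in the passage $w \leftrightarrow v$ and the precise reading of \eqref{e A lambda B nu}: a wrong composition order there (for instance confusing $w^{\inv\;\barrev}$ with $w^{\barrev\;\inv}$, which genuinely differ) would break the argument.
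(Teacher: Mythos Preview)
Your argument is correct and follows essentially the same route as the paper's proof: both hinge on Proposition~\ref{p plain basics}(iii)--(iv), which says that the condition $Q\nmix(w)=B_\nu$ can be rewritten, after the substitution $v=w^{\barrev\;\inv}$, as a condition on $P\nmix(v)$ alone. You spell out the bijection with standard colored tableaux of shape $\lambda$ and the cardinality count more explicitly than the paper does, but the substance is the same.

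One small correction: your parenthetical that ${}^{\inv}$ and ${}^{\barrev}$ are \emph{commuting} involutions is false (indeed the paper records $v^{\inv\;\barrev\;\inv}=v^{\barud}$, and you yourself note at the end that $w^{\inv\;\barrev}$ and $w^{\barrev\;\inv}$ genuinely differ). Fortunately your conclusion $w=v^{\inv\;\barrev}$ does not need commutativity: since each operator is an involution, $v=w^{\barrev\;\inv}$ gives $v^{\inv}=w^{\barrev}$ and then $v^{\inv\;\barrev}=w$. So the proof stands; just drop the word ``commuting''.
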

\begin{proof}
Let $w \in \text{CW}_{A_\lambda, d, B_\nu}$ and set $v = w^{\barrev\; \inv}$.   \vspace{1mm}
By Proposition \ref{p plain basics} (iii), $B_\nu = Q\nmix(w) = Q\nmix(v^{\inv \; \barrev}) = P(v^{\barud \; \barrev\; \plain})$. Then  $B_\nu$ can be computed in terms of $P\nmix(v)$ as described in Proposition \ref{p plain basics} (iv).  This gives a definition of $\text{CT}_{d, B_\nu}(\lambda)$ that depends on $d, B_\nu, \lambda$, but not on $A_\lambda$.
\end{proof}

This proposition given, we now obtain a bijection between $\text{CT}_{A_\lambda, d}(\nu)$ and $\text{CT}_{d, B_\nu}(\lambda)$ via $\text{CT}_{A_\lambda, d}(\nu) \xleftarrow{\cong} \text{CW}_{A_\lambda, d, B_\nu} \xrightarrow{\cong} \text{CT}_{d, B_\nu}(\lambda)$.  See Example \ref{ex CW}.
It may be possible to describe this bijection directly, but we do not know how to do this and, in view of this example, it will not be easy.
A related difficult problem is to give a direct definition of the partition $\text{CT}_{d, B_\nu}(\lambda) = \text{CT}^-_{d, B_\nu}(\lambda) \sqcup \text{CT}^+_{d, B_\nu}(\lambda)$ induced from the partition
$\text{CT}_{A_\lambda, d}(\nu) = \text{CT}^-_{A_\lambda, d}(\nu) \sqcup \text{CT}^+_{A_\lambda, d}(\nu)$
via this bijection.
Example \ref{ex CW} shows that the subset of $\text{CT}_{d, B_\nu}$ consisting of color raisable tableaux does not, in general, have cardinality  $\gcoef$.
We have therefore convinced ourselves that the equality  $\gcoef = g_{\nu \mu(d) \lambda}$ is difficult to see from our rules.

\subsection{Comparison of the hook Kronecker rules with Lascoux's Kronecker Rule}
\label{ss comparison with Lascoux}
We now compare Hook Kronecker Rules II and III to the experiment in the introduction and to Lascoux's Kronecker Rule \cite{Lascoux}.
This comparison is better made with the ``reverse'' of our rules, which we now compute.
Define ${}^\plainr$ by $w^\plainr := w^{\rev\;\plain\;\rev}$ (this shuffles barred letters right instead of left).
Let $\lambda$, $\nu$, $A_\lambda$, $B_\nu$ be as in  \textsection\ref{ss more Kronecker rules}.
\begin{align}
\text{CW}^\rev_{A_\lambda, d, B_\nu} :=&\ \big(\text{CW}_{{A_\lambda}^\transpose, d, {B_\nu}^{\evac\;\transpose}}\big)^{\rev} \notag\\
 =&\ \big\{w^\rev: P(w^{\plain}) = {A_\lambda}^\transpose,\ \tc(w) = d,\ Q\nmix(w) = {B_\nu}^{\evac\;\transpose}\big\}\notag \\
 =&\ \big\{w: P(w^{\rev\;\plain}) = {A_\lambda}^\transpose,\ \tc(w^{\rev}) = d,\ Q\nmix(w^{\rev}) = {B_\nu}^{\evac\;\transpose}\big\}\notag \\
 =&\ \big\{w: P(w^{\plainr}) = A_\lambda,\ \tc(w) = d,\ Q\nmix(w) = {B_\nu}\big\}  \label{e CW rev1}\\
 =&\ \big\{w: P(w^{\plainr}) = A_\lambda,\ \tc(w) = d,\ Q(w^{\bneg}) = {B_\nu}\big\} \label{e CW rev2}.
\end{align}
The second to last equality is by \eqref{e rev} and Proposition \ref{p rev star mixed} (iv), and the last equality is by Proposition \ref{p neg and mixed insertion}.
Increasing hook subwords and special increasing subwords can be defined in a similar way to their decreasing counterparts.
Then the set $\text{CW}^{\rev \; -}_{A_\lambda, d, B_\nu} := \big(\text{CW}^-_{A_\lambda^\transpose, d, {B_\nu}^{\evac\;\transpose}}\big)^{\rev}$ (which has the desired cardinality  $\gcoef$) can be defined directly as
\be \label{e rev color raisable}
\parbox{14cm}{the subset of $\text{CW}^\rev_{A_\lambda, d, B_\nu}$ consisting of those
words  $w$ such that the largest letter of any special increasing  subword of $w$ is unbarred.}
\ee

Define the following subsets of colored permutations (L stands for Lascoux)
\begin{align}
\text{CWL}_{A_\lambda, d} &:= \big\{w: P(w^{\barrev\;\plainr}) = A_\lambda,\ \tc(w) = d \big\}, \notag \\
\text{CWL}_{A_\lambda, d, B_\nu} &:= \big\{w: P(w^{\barrev\;\plainr}) = A_\lambda,\ \tc(w) = d,\ Q(w) = B_\nu\big\}, \notag \\
\text{CWL}^-_{A_\lambda, d} &:= \big\{w: P(w^{\barrev\;\plainr}) = A_\lambda,\ \tc(w) = d,\ \text{$w_n$ is unbarred}
\big\}, \notag \\
\text{CWL}^-_{A_\lambda, d, B_\nu} &:= \big\{w: P(w^{\barrev\;\plainr}) = A_\lambda,\ \tc(w) = d,\ Q(w) = B_\nu, \ \text{$w_n$ is unbarred}
\big\}. \label{e CWLB}
\end{align}
For an object $w$ in the alphabet $\mathcal{A}$, define $w^\varnothing$ to be the object in the alphabet of ordinary letters obtained from $w$ by removing all bars; also, for a set $W$ of colored objects, define $W^\varnothing$ to be the multiset $\lms w^\varnothing : w \in W \rms$.
We claim that when $\mu$ is the hook shape $\mu(d)$, the multiset \eqref{e lambda circ mu} from the introduction is related to the $\text{CWL}$ by
\be \label{e u circ v CWL}
\Gamma_\lambda \circ \Gamma_\mu = \big(\text{CWL}^-_{A_\lambda, d}\big)^\varnothing.
\ee
Right
multiplying\footnote{We adopt the convention for multiplying permutations in which  $u \circ s_i$ is obtained from  $u$ by swapping letters  $u_i$ and  $u_{i+1}$, where $s_i$ is the transposition $(i\ \,i\!+\!1)$.}
a permutation  $u$ by a permutation $v$ such that $P(v) = Z_{\mu(d)}^\stand$
is the same as reversing the subword  $u_{n-d} u_{n-d+1} \cdots u_n$ of  $u$ and then shuffling $u_n,u_{n-1}, \ldots ,u_{n-d+1}$ to the left, into the rest of the word.
By placing bars on the letters $u_{n},\ldots,u_{n-d+1}$, we obtain a colored word  $w$
of total color  $d$ such that $w^{\barrev\;\plainr} = u$ and $w_n$ is unbarred. This verifies \eqref{e u circ v CWL}.

\begin{example}
If  $d=2$,
\[\phantom{\circ v} u =5\ 2\ 7\ 1\ 4\ 6\ 3 \ \ \ \text{and}\ \ \ v=7\ 1\ 2\ 6\ 3\ 4\ 5,\]
then
\[u \circ v = 3\ 5\ 2\ 6\ 7\ 1\ 4\ \ \ \text{and}\ \ \ w = \crc{3}\ 5\ 2\ \crc{6}\ 7\ 1\ 4.\]

As a further example,  observe that $\text{CWL}_{Z_{(3,1,1)}^{\stand}, 2}$  is the result of applying ${}^{\stand \; \rev \; \rev_\crcempty}$ to the words in Figure \ref{f cw};  the bottom six rows correspond to the subset  $\text{CWL}^-_{Z_{(3,1,1)}^{\stand}, 2}$.
\end{example}

Hence the half of Lascoux's Kronecker Rule concerning property (B) becomes
\[ \text{\emph{For any hook shape $\lambda$, $d \in \{0,1,\cdots,n-1\}$, and $B_\nu \in \text{SYT}(\nu)$, $\gcoef = |\text{CWL}^-_{Z_\lambda^{\stand}, d, B_\nu}|$.}}\]
The similar forms of Lascoux's Kronecker Rule and the reverse of Hook Kronecker Rule III are then apparent by comparing \eqref{e CW rev1}, \eqref{e CW rev2}, and \eqref{e rev color raisable} with \eqref{e CWLB}.

We still do not fully understand the relationship between these rules, however.
For example, the multisets of SYT $P\big(\text{CWL}^-_{Z_\lambda^{\stand}, d}\big)^\varnothing$ and $P_\mix\big(\text{CW}^{\rev\;-}_{Z_\lambda^\stand, d}\big)^\varnothing $ are equal when $\lambda$ is a hook shape.  However, we only know how to prove this by giving an explicit description of both multisets and then checking that they are the same.  Moreover, for general  $\lambda$, these multisets seem to be quite close; in fact, the tableaux in $P_\mix\big(\text{CW}^{\rev\;-}_{Z_\lambda^\stand, d}\big)$ were originally found by making slight modifications to those in $P\big(\text{CWL}^-_{Z_\lambda^{\stand}, d}\big)$.

\begin{remark}
Be aware that, although $\big(\text{CWL}^-_{Z_\lambda^{\stand}, d}\big)^\varnothing$  is a union of Knuth equivalence classes, $\text{CWL}^-_{Z_\lambda^{\stand}, d} $ is not in the following sense:  a Knuth transformation between two elements of  this multiset, say $\cdots x z y \cdots  \tto  \cdots z x y \cdots$,  may correspond to  a transformation of the form $\cdots \crc{x} z y \cdots  \tto  \cdots \crc{z} x y \cdots$  rather than $\cdots \crc{x} z y \cdots  \tto  \cdots z  \crc{x} y \cdots$ in $\text{CWL}^-_{Z_\lambda^{\stand}, d}$.   This is part of the difficulty in Problem \ref{p Lascouxs rule}, below.
\end{remark}

We believe that the tableaux in $P_\mix\big(\text{CW}^{\rev\;-}_{Z_\lambda^\stand, d}\big)$ are really the correct combinatorial objects for Kronecker coefficients for one hook shape, but we are not entirely sure that the words $\text{CWL}^-_{Z_\lambda^{\stand}, d}$ should be given up in favor of $\text{CW}^{\rev\;-}_{Z_\lambda^\stand, d}$. We therefore suggest the following problem, which  may help uncover a deeper relationship between Lascoux's Kronecker Rule and the hook Kronecker rules.
\begin{problem}  \label{p Lascouxs rule}
Find a nice proof of the fact that $P\big(\text{CWL}^-_{Z_\lambda^{\stand}, d}\big)^\varnothing = P_\mix\big(\text{CW}^{\;\rev\;-}_{Z_\lambda^\stand, d}\big)^\varnothing$ when $\lambda$ is a hook shape. For general $\lambda$, find an explicit bijection between $\text{CWL}^-_{Z_\lambda^{\stand}, d}$ and $P_\mix\big(\text{CW}^{\;\rev\;-}_{Z_\lambda^\stand, d}\big)$. For instance, such a bijection might modify these words in a simple way and then apply Schensted or mixed insertion, or might apply a new kind of insertion algorithm.
\end{problem}
\section*{Acknowledgments}
I am extremely grateful to John Stembridge for his generous advice and many detailed discussions.  I thank Kunyu Chen and Michael Bennett for their help typing and typesetting figures.

\bibliographystyle{plain}
\bibliography{mycitations}
\end{document}